\definecolor{labelkey}{rgb}{0,0,1}
\newtheorem{theorem}{Theorem}[section]
\newtheorem{lemma}[theorem]{Lemma}
\newtheorem{proposition}{Proposition}[section]
\newtheorem{corollary}{Corollary}[section]
\theoremstyle{definition}
\newtheorem{definition}[theorem]{Definition}
\theoremstyle{remark}
\numberwithin{equation}{section}
\newcommand{\ud}{\,\mathrm{d}}
\newcommand{\p}{\ensuremath{\partial}}
\newcommand{\n}{\ensuremath{\nonumber}}
\newcommand{\eps}{\ensuremath{\varepsilon}}
\newcommand{\bigO}{\mathcal{O}}
\newcommand\be{\begin{equation}}
\newcommand\ee{\end{equation}}
\newcommand\bea{\begin{eqnarray}}
\newcommand\eea{\end{eqnarray}}
\newcommand\bi{\begin{itemize}}
\newcommand\ei{\end{itemize}}
\newcommand\ben{\begin{enumerate}}
\newcommand\bena{\begin{enumerate}[(a)]}
\newcommand\een{\end{enumerate}}
\newcommand\bp{\begin{proof}}
\newcommand\ep{\end{proof}}
\title{Regularity and Expansion for Steady Prandtl Equations}
\author{Yan Guo \footnote{Division of Applied Mathematics, Brown University, \url{yan_guo@brown.edu}} \qquad Sameer Iyer \footnote{Department of Mathematics, Princeton University, \url{ssiyer@math.princeton.edu}}}
\begin{document}

\maketitle

\begin{abstract}
Due to degeneracy near the boundary, the question of high regularity for solutions to the steady Prandtl equations has been a longstanding open question since the celebrated work of Oleinik. We settle this open question in affirmative in the absence of an external pressure. Our method is based on energy estimates for the quotient, $q = \frac{v}{\bar{u}}$, $\bar{u}$ being the classical Prandtl solution, via the linear Derivative Prandtl Equation (LDP). As a consequence, our regularity result leads to the construction of Prandtl layer expansion up to any order.
\end{abstract}

\section{Introduction}

\subsection{Steady Prandtl Equation}

The 2D, steady, Prandtl equations are given by 
\begin{align} \label{Pr.leading}
\left.
\begin{aligned} 
&\bar{u} \bar{u}_x + \bar{v} \bar{u}_y - \bar{u}_{yy} = - \p_x p_E(x, 0) \\
&\bar{u}_x + \bar{v}_y = 0
\end{aligned}
\right\} \text{ on } (x, y) \in (0, L) \times \mathbb{R}_+.
\end{align}
The quantity $p_E(x, 0)$ above is considered prescribed, and $\p_x p_E(x, 0)$ evidently acts as a forcing term to the Prandtl equations. In this paper, we are concerned with the homogeneous Prandtl equations, that is $p_E = 0$.  

The Prandtl equations \eqref{Pr.leading} are thought of as an \textit{evolution} equation, with $x$ being a time-like variable, and $y$ being space-like. This can be formally seen through a crude derivative count $\bar{u} \p_x \approx \p_{yy}$, which indicates \eqref{Pr.leading} is a quasilinear, parabolic equation. As a result, the equations \eqref{Pr.leading} are supplemented with boundary conditions at $y = 0, y \rightarrow \infty$, and an initial condition at $x = 0$. The quantity $L$ appearing in \eqref{Pr.leading} is thought of as the time over which we are considering the evolution. To summarize the datum that is prescribed is 
\begin{align} \label{Pr.leading:1}
\bar{u}|_{y = 0} = \bar{v}|_{y = 0} = 0, \qquad \bar{u}|_{y \uparrow \infty} = u_E(\infty), \qquad \bar{u}|_{x = 0} = \bar{U}_0(y). 
\end{align} 
The conditions at $y = 0$ are called the no-slip condition, which is the classical boundary condition for viscous flows. The condition at $y = \infty$ is classically known as the Euler matching condition, and physically describes the horizontal component, $u$, matching up with some ambient Euler flow at $\infty$. Thus, the quantity $u_E(\infty)$ is a prescribed constant. We can, without loss of generality, assume this constant is $1$. 

The Prandtl system \eqref{Pr.leading} - \eqref{Pr.leading:1} is a celebrated system of PDEs that arise naturally when considering the physically important problem of describing the vanishing viscosity limit of Navier-Stokes flows on domains with boundaries. The precise link to the inviscid limit problem will be made clear later in the introduction, in subsection \ref{subsection:Inviscid:Limit}. In addition, the Prandtl system by itself poses significant mathematical challenges that have attracted substantial attention over the years, beginning with the seminal work of Oleinik, \cite{Oleinik}.

The following local regularity result for the Prandtl system, \eqref{Pr.leading} - \eqref{Pr.leading:1},  is classical (\cite{Oleinik}, P. 21, Theorem 2.1.1): 
\begin{theorem}[Oleinik]   \label{thm.Oleinik} Assume $\bar{U}_0 \in C^\infty(\mathbb{R}_+)$ satisfies the following
\begin{align} 
\begin{aligned} \label{OL.1}
&\bar{U}_0(y) > 0 \text{ for } y > 0, \qquad \bar{U}_0'(0) > 0, \qquad \bar{U}_0''(0) = \bar{U}_0'''(0) = 0, 
\end{aligned}
\end{align}

\noindent  Then for some $L > 0$, there exists a solution, $[\bar{u}, \bar{v}]$ to \eqref{Pr.leading} - \eqref{Pr.leading:1} satisfying, for some $y_0, m_0 > 0$, 
\begin{align} \label{coe.2}
&\sup_{x \in (0,L)} \sup_{y \in (0, y_0)} |\bar{u}, \bar{v}, \bar{u}_{y}, \bar{u}_{yy}, \bar{u}_{x}| \lesssim C(\bar{U}_0), \\ \label{coe.1}
&\sup_{x \in (0,L)} \sup_{y \in (0, y_0)} \bar{u}_{y} > m_0 > 0. 
\end{align}
\end{theorem}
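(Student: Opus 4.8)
The plan is to remove the degeneracy in \eqref{Pr.leading} via the Crocco transformation, turning the system into a single scalar quasilinear parabolic equation. Anticipating $\bar{u}_y > 0$, introduce $\eta := \bar{u}$ as a new independent variable in place of $y$, keep $\xi := x$, and take the new unknown to be $w(\xi,\eta) := \bar{u}_y$. Differentiating the momentum equation of \eqref{Pr.leading} in $y$, eliminating $\bar{v}$ through $\bar{v}_y = -\bar{u}_x$, and rewriting every $y$-derivative via $\p_y = w\,\p_\eta$, one finds that, when $p_E \equiv 0$, $w$ solves
\begin{equation}\label{crocco}
\eta\, w_\xi = w^2\, w_{\eta\eta}, \qquad (\xi,\eta)\in(0,L)\times(0,1),
\end{equation}
with initial datum $w(0,\eta) = w_0(\eta) := \bar{U}_0'(\bar{U}_0^{-1}(\eta))$, the Neumann condition $w_\eta(\xi,0) = 0$ (equivalent to $\bar{u}_{yy}|_{y=0} = 0$, which the momentum equation forces on $\{y=0\}$), and the Dirichlet condition $w(\xi,1) = 0$ (as $\bar{u}_y \to 0$ when $y \to \infty$). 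Equation \eqref{crocco} is doubly degenerate: the coefficient $\eta$ of $w_\xi$ vanishes at $\eta = 0$, and the diffusion coefficient $w^2$ vanishes at $\eta = 1$. In Crocco variables, \eqref{OL.1} says exactly that $w_0$ is smooth on $[0,1)$ with $w_0(0) = \bar{U}_0'(0) > 0$ (non-degeneracy of the initial profile), that $\bar{U}_0$ maps $\mathbb{R}_+$ onto $(0,1)$, and that the first two corner-compatibility conditions hold at $(\xi,\eta) = (0,0)$ --- relations any smooth solution must obey, since $\bar{u}_{yy}(x,0) = \bar{u}_{yyy}(x,0) = 0$.

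\textbf{Regularization.} First I would lift both degeneracies at once: replace $\eta$ by $\eta + \epsilon$, truncate to $\eta \in (0, 1-\delta)$ with a modified Dirichlet value at $\eta = 1-\delta$, and, if needed, add a harmless term keeping the unknown bounded below. The resulting problem is uniformly parabolic with smooth data, so classical quasilinear parabolic theory yields a smooth solution $w^{\epsilon,\delta}$ on $(0,L_0)\times(0,1-\delta)$ for some $L_0 > 0$ independent of $\epsilon,\delta$.

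\textbf{Uniform estimates and passage to the limit.} The crux is a family of a priori bounds on $w^{\epsilon,\delta}$ that are independent of $\epsilon,\delta$. The constant $M := \|w_0\|_{L^\infty}$ is a supersolution, giving $0 < w^{\epsilon,\delta} \le M$. The essential quantitative step is a \emph{lower} bound near $\eta = 0$: a subsolution barrier adapted to \eqref{crocco}, built from $w_0(0) = \bar{U}_0'(0) > 0$ and the Neumann condition, yields $w^{\epsilon,\delta}(\xi,\eta) \ge m_0 > 0$ for $(\xi,\eta) \in (0,L_0)\times[0,\eta_0]$; near $\eta = 1$ a $\sqrt{1-\eta}$-type supersolution controls the rate of degeneration. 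Once $w$ is trapped between positive constants on any strip with $\eta$ bounded away from $1$, \eqref{crocco} is uniformly parabolic there; differentiating the equation and iterating interior Schauder (or De Giorgi--Nash--Moser) estimates bounds $w_\eta$, $w_{\eta\eta}$, $w_\xi$ and all higher derivatives, with \eqref{OL.1} providing regularity up to $\{\xi=0\}$ and $\{\eta=0\}$. Arzel\`a--Ascoli then produces, as $\epsilon,\delta\to 0$, a smooth solution $w$ of the Crocco problem inheriting the same bounds.

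\textbf{Return to $(x,y)$ variables.} Define $y(\xi,\eta) := \int_0^\eta \frac{d\eta'}{w(\xi,\eta')}$, which is finite and smooth for $\eta$ away from $1$ since $w \ge m_0$ there; inverting recovers $\bar{u}(x,y)$, and then $\bar{v}(x,y) := -\int_0^y \bar{u}_x(x,y')\,dy'$. The identities $\bar{u}_y = w$, $\bar{u}_{yy} = w\,w_\eta$ and $\bar{u}_x = \frac{1}{\bar{u}}(w w_\eta - \bar{v}\,w)$, together with the uniform $w$-bounds on $\eta \in [0,\eta_0]$ and the boundedness of $\bar{v}$, translate into \eqref{coe.2} and the non-degeneracy \eqref{coe.1} on a strip $y \in (0,y_0)$ over a time interval $(0,L)$. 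The main obstacle throughout is the double degeneracy of \eqref{crocco}: establishing the uniform lower bound $w \ge m_0$ near $\eta = 0$ (equivalently, that the layer does not separate on $(0,L)$), and tuning the barrier near $\eta = 1$ so that both $w \le M$ and $w(\xi,1) = 0$ survive the limit --- this is precisely where $p_E \equiv 0$ and the sign conditions in \eqref{OL.1} enter.
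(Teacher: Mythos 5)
Your route is genuinely different from the one the paper invokes: the paper does not prove Theorem \ref{thm.Oleinik} at all, but cites Oleinik--Samokhin, whose argument for the \emph{steady} problem passes to the von Mises variables \eqref{von.Mise}, $\psi=\int_0^y\bar u$, $\tilde u(x,\psi)=\bar u(x,y)$, obtains the degenerate equation $\tilde u_x=\p_\psi(\tilde u\tilde u_\psi)$, and runs maximum-principle barriers there. Your Crocco reduction $\eta\,w_\xi=w^2w_{\eta\eta}$ is correctly derived (the cancellation of the $\bar u_x$ and $\bar v$ terms after one $\p_y$ and substitution of the momentum equation is right, as are the Neumann condition at $\eta=0$ and the reading of $\bar U_0''(0)=\bar U_0'''(0)=0$ as corner compatibility), and the overall scheme --- regularize, constant supersolution, positive subsolution near $\eta=0$, a $\sqrt{1-\eta}$ barrier at $\eta=1$, interior Schauder, pass to the limit, reconstruct $y=\int_0^\eta w^{-1}$ --- is essentially Oleinik's own treatment of the \emph{unsteady} monotone Prandtl problem, so in principle it is a viable alternative engine for \eqref{coe.2}--\eqref{coe.1}.

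The genuine gap is at the very first step: taking $\eta=\bar u$ as an independent variable requires $y\mapsto\bar U_0(y)$ to be a diffeomorphism of $\mathbb{R}_+$ onto $(0,1)$, both for $w_0(\eta)=\bar U_0'(\bar U_0^{-1}(\eta))$ to be defined and for the Dirichlet condition at $\eta=1$ to make sense. Hypothesis \eqref{OL.1} does not give this: it gives only $\bar U_0>0$ for $y>0$ and $\bar U_0'(0)>0$, so $\bar U_0$ is increasing near $y=0$ but may fail to be injective on $\mathbb{R}_+$. Your parenthetical claim that \eqref{OL.1} ``says exactly that $\bar U_0$ maps $\mathbb{R}_+$ onto $(0,1)$'' silently upgrades the hypothesis to global monotonicity in $y$. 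This is precisely why the steady problem is treated in von Mises rather than Crocco variables: $\psi=\int_0^y\bar u$ is strictly increasing under the stated assumption $\bar u>0$ alone, with no monotonicity of $\bar u$ in $y$ required, and it still yields the conclusions \eqref{coe.1}, which are only local in $y$. If you add $\bar U_0'>0$ on all of $\mathbb{R}_+$ to the hypotheses, your argument goes through --- modulo actually exhibiting the lower barrier near $\eta=0$, which you correctly identify as the quantitative crux (it is where the smallness of $L$ enters) but leave unconstructed; under \eqref{OL.1} as written, the Crocco route does not produce the global-in-$y$ solution on $(0,L)\times\mathbb{R}_+$ that the theorem asserts.
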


The method employed by Oleinik to prove Theorem \ref{thm.Oleinik} is to pass to the following change of coordinates, known as the von-Mise transform: 
\begin{align} \label{von.Mise}
&(x,\psi) = (x,\int_0^y \bar{u}(x,y') \ud y'), \hspace{3 mm} \tilde{u}(x,\psi) := \bar{u}(x,y), \hspace{3 mm} \p_x \tilde{u} = \p_\psi \{ \tilde{u} \tilde{u}_\psi \}. 
\end{align}

\noindent The equation above is quasilinear, degenerate diffusion equation. Estimates (\ref{coe.2}) - (\ref{coe.1}) are subsequently proven using maximum principle techniques. 

Despite this, establishing \textit{higher} regularity has been an important open problem. One cannot simply differentiate the equation (\ref{von.Mise}) and repeat Oleinik's argument because the commutators that arise from this process are uncontrollable near the boundary $\{\psi = 0\} = \{y = 0\}$. One contribution of the present paper is to resolve this problem by introducing a novel set of energy estimates enabling us to proving higher regularity estimates.

\begin{theorem} \label{thm.main} Assume the data $\bar{U}_0(y)$ is provided satisfying conditions (\ref{OL.1}). Assume also generic compatibility conditions at the corner $(0,0)$ (as defined explicitly in Definition \ref{def:compatibility:condition}) up to order $M_0$. Fix $N > 0$ sufficiently large relative to universal constants, and assume the exponential decay of derivatives: 
\begin{align}
\| \p_y^{j} \bar{U}_0 e^{Ny} \|_{L^\infty} \le C_0 \text{ for } j = 1,...,M_0.
\end{align}
Then there exists an $0 < L << 1$ depending on $C_0$ so that on $0 \le x \le L$, Oleinik's solutions  guaranteed by Theorem \ref{thm.Oleinik} obey the following estimates for $2\alpha +  \beta \le M_0$ and for $\gamma \le \frac{M_0}{2} - 1$: 
\begin{align*}
\| \p_x^\alpha \p_y^\beta \bar{u} e^{Ny} \|_{L^\infty} + \|\p_x^{\gamma} \bar{v} \|_{L^\infty}   \lesssim C_0. 
\end{align*}
\end{theorem}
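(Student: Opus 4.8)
The plan is to work not with the original Prandtl unknowns but with the \emph{quotient} $q := \bar v/\bar u$ (as the abstract indicates), together with a related ``derivative Prandtl'' unknown, and to run weighted energy estimates on the equation these quantities satisfy. The key structural observation is that the degeneracy of the Prandtl equation near $\{y=0\}$ is an artifact of the choice of variables: since $\bar u_y(x,0) \gtrsim m_0 > 0$ by Oleinik's estimate \eqref{coe.1}, the weight $\bar u$ vanishes linearly at the boundary, and dividing by it trades a boundary degeneracy for a Hardy-type inequality. Concretely, I would first record the linearized/derivative Prandtl equation (LDP) governing $q$ or $\p_x q$, compute the commutator terms that arise from differentiating, and check that the ``bad'' commutators — the ones Oleinik's maximum-principle argument cannot handle — are precisely the ones that become controllable in the $L^2(\bar u^2\,dy)$-type energy space because of an accompanying Hardy inequality $\int (f/y)^2 \lesssim \int (\p_y f)^2$ localized near the boundary.

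The second step is to set up the full hierarchy of weighted energy norms. I would introduce norms of the schematic form $\sum_{2\alpha+\beta \le M_0} \| \p_x^\alpha \p_y^\beta q \cdot e^{Ny} \cdot (\text{weight in }\bar u)\|_{L^2_y}$ (this is where the parabolic scaling $2\alpha+\beta$, visible in the statement, enters: one $x$-derivative costs two $y$-derivatives), and derive a differential inequality in the time-like variable $x$ of the form $\frac{d}{dx}\mathcal E(x) + \mathcal D(x) \lesssim \mathcal E(x) + (\text{lower order})$, where $\mathcal D$ is the positive-definite dissipation coming from $-\bar u_{yy}$. The exponential weight $e^{Ny}$ with $N$ large is there to (a) propagate the exponential decay of the data $\p_y^j \bar U_0 e^{Ny}$ and (b) generate favorable lower-order terms — the $N$-dependent term from integrating $\p_y(e^{2Ny})$ against the dissipation should dominate several error terms, which is why $N$ must be large relative to universal constants. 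Grönwall on $[0,L]$ with $L\ll 1$ then closes the energy estimate; the smallness of $L$ absorbs the quadratic/nonlinear interactions of the quotient.

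The third step is to convert the energy control of $q$ and its derivatives back into the claimed $L^\infty$ bounds on $\p_x^\alpha\p_y^\beta \bar u\, e^{Ny}$ and $\p_x^\gamma \bar v$. This is where the top-order loss appears: Sobolev embedding in $y$ costs half a derivative, trading $M_0$ energy-level derivatives for $M_0 - O(1)$ pointwise derivatives, and reconstructing $\bar u = $ (something)$\,\cdot q$ and $\bar v = \bar u q$ from the quotient costs a further derivative on $\bar v$, which is exactly why $\bar v$ is only controlled up to order $\gamma \le M_0/2 - 1$ while $\bar u$ is controlled up to $2\alpha+\beta \le M_0$. The compatibility conditions at the corner $(0,0)$ up to order $M_0$ are needed to ensure the initial energy $\mathcal E(0)$ is finite and that the $x$-derivatives of $q$ at $x=0$ are well-defined (one reads them off by repeatedly differentiating the equation in $x$ and evaluating, à la Cauchy–Kovalevskaya, so each $x$-derivative imposes one more compatibility relation between $\bar U_0$ and the equation).

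I expect the main obstacle to be the commutator analysis at top order near the boundary: when one differentiates the LDP $\beta$ times in $y$ and $\alpha$ times in $x$, the worst terms involve $\p_y^{\beta+1}\bar u$ or $\bar u_{yy}$ hitting low-order factors of $q$, and these are a priori not in the energy space — controlling them requires exploiting the precise algebraic structure of the quotient equation (cancellations between the convection and diffusion commutators) combined with the Hardy inequality to relocate the extra derivative, and doing this uniformly across the whole hierarchy $2\alpha+\beta\le M_0$ without losing the parabolic gain is the technical heart of the argument. A secondary difficulty is bookkeeping the $e^{Ny}$ weight through the nonlinear terms: since $q = \bar v/\bar u$ does not itself decay exponentially (only its derivatives do), one must be careful that the weight is distributed correctly and that $N$ can genuinely be chosen once and for all independent of $M_0$.
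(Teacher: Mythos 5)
Your proposal follows essentially the same route as the paper: pass to the quotient $q=\bar v/\bar u$, derive the derivative Prandtl equation $-\p_{xy}\{\bar u^{2}q_y\}+\p_y^4 v=0$, run weighted energy estimates in the time-like variable $x$ that exploit $\bar u_y|_{y=0}>0$ together with Hardy-type inequalities near the boundary, close with $L\ll 1$, and use the corner compatibility conditions to make sense of the initial energies $\p_x^k q_y|_{x=0}$. The only organizational difference is that the paper's hierarchy is indexed solely by the number of $x$-derivatives (the norms $X_k$), with the $y$-derivatives up to order $4+2k$ recovered afterwards from the equation itself (Lemma \ref{lemma:higher:y}) rather than carried inside one monolithic norm over all $2\alpha+\beta\le M_0$.
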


The system governing the behavior of the higher order derivatives of $[\bar{u}, \bar{v}]$ is, naturally, the \textit{linearized} Prandtl equations. We now state precisely the linearized Prandtl system we consider, where we will work with some more generality than is strictly required for the proof of Theorem \ref{thm.main}. Indeed, we consider the following problem for the unknown vector field, $[u_p, v_p]$: 
\begin{align}
\begin{aligned} \label{Orig.Orig.Pr.intro}
&\bar{u} \p_x u_p + u_p \p_x \bar{u} + \bar{v} \p_y u_p + v_p \p_y \bar{u} - \p_{yy} u_p = f, \qquad (x, y) \in (0, L) \times \mathbb{R}_+ \\
&\p_x u_p + \p_y v_p = 0, \qquad (x, y) \in (0, L) \times \mathbb{R}_+ \\
&[u_p, v_p]|_{y = 0} =  [u_0(x), 0], \qquad \lim_{y \rightarrow \infty} u_p(x, y) = 0, \qquad v_p|_{x = 0} = V_0(y). 
\end{aligned}
\end{align}

\noindent The boundary condition $u_0(x)$ is a prescribed, smooth function (which certainly is allowed to be zero). We note that in our prescription of \eqref{Orig.Orig.Pr.intro}, the initial datum is now prescribed at the level of $v_p|_{x = 0}$ as opposed to $u_p$ as in (\ref{Pr.leading}). 

The method that we use to obtain Theorem \ref{thm.main} is sufficiently robust also to obtain estimates for (\ref{Orig.Orig.Pr.intro}). This is an important distinction between our technique and that of Oleinik. Indeed, the proof of Theorem \ref{thm.Oleinik} relies on a very specific use of the maximum principle which cannot apply to the more general, linearized problem. We now state our main result regarding the linearized Prandtl system, \eqref{Orig.Orig.Pr.intro}.

\begin{theorem}  \label{thm:LP} Fix any $M_0$ and an $N$ large relative to universal constants. Assume compatibility conditions on the initial datum, $V_0$, as specified in Definitions \ref{def:compatibility:condition} and \ref{def:integral:cond}. Assume also that 
\begin{align} \label{assume:V0:thm:LP}
\|\p_y^{k} V_0 e^{Ny} \|_{L^2(\mathbb{R}_+)} \le C_0 < \infty \text{ for } k = 1,...,2M_0 + 3. 
\end{align}
Assume the forcing, $f$, satisfies the bounds
\begin{align}
\| \p_{x}^{k+1} \p_y f e^{Ny} \|_{L^2} + \| \p_x^{M_0 + 2} \p_y f \langle y \rangle \|_{L^2} \le C_1 < \infty \text{ for } k = 0,...,M_0. 
\end{align}
Then the following estimates are valid
\begin{align}
 &\| \p_x^\alpha v_p \|_{L^\infty} \lesssim C(C_0, C_1) \text{ for } 0 \le \alpha \le M_0 + 1, \\
 &\| \p_x^{\alpha} \p_y^{\beta} v_p e^{Ny} \|_{L^\infty} \lesssim C(C_0, C_1) \text{ for } \alpha + \frac{\beta}{2} \le M_0 + 1, \\
 &\| \p_x^{\alpha} \p_y^\beta u_p e^{Ny} \|_{L^\infty} \lesssim C(C_0, C_1) \text{ for } \alpha + \frac{\beta + 1}{2} \le M_0 + 1.  
\end{align}
\end{theorem}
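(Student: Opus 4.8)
The plan is to introduce the quotient unknown $q = v_p / \bar{u}$ (or, in the homogeneous case relevant to Theorem \ref{thm.main}, $q = \bar v / \bar u$) and derive the governing equation for $q$ — what the authors call the Linear Derivative Prandtl (LDP) equation. The key algebraic observation is that dividing by the degenerate weight $\bar u$, which vanishes linearly at $y = 0$, converts the bad commutator terms that obstruct naive differentiation of \eqref{von.Mise} into terms with a favorable sign or into lower-order terms that can be absorbed. Concretely, I would write $v_p = \bar u \, q$, use the divergence-free condition to express $u_p = -\p_x \int_0^y \bar u q$, substitute into \eqref{Orig.Orig.Pr.intro}, and after dividing through by $\bar u$ obtain a parabolic-type equation for $q$ of the schematic form $\bar u \p_x q + \bar v \p_y q - \p_{yy} q + (\text{transport/zeroth order}) = \tilde f$, where the degeneracy has been tamed because the singular coefficients combine with $\bar u_y / \bar u$, which is bounded below and above by \eqref{coe.2}--\eqref{coe.1}.

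\medskip

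With the LDP equation in hand, the core of the argument is an energy/weighted-energy estimate for $q$ and its $x$- and $y$-derivatives. I would test the $q$-equation against $q e^{2Ny}$ (and more generally against $\p_x^\alpha \p_y^\beta q$ times the exponential weight), integrate by parts in $y$ over $\mathbb{R}_+$, and track the boundary terms at $y = 0$: this is where the choice of $q$ pays off, since the no-slip condition and the structure of the quotient make the boundary contributions either vanish or have the right sign. The parabolic gain $\int_0^L \|\p_y q\|_{L^2}^2\, dx$ (with exponential weight) is used to absorb the transport terms $\bar v \p_y q$ and the zeroth-order terms, using the Oleinik bounds on $\bar u, \bar v, \bar u_y$ and a smallness-in-$L$ argument (this is the role of $0 < L \ll 1$) to close a Grönwall-type inequality in the time-like variable $x$. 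One then bootstraps: estimates on $\p_x^k q$ feed into estimates on $\p_x^k \p_y q$, and the weighted $L^2$ bounds are upgraded to $L^\infty$ bounds via one-dimensional Sobolev embedding in $y$ (controlling $\|\p_y^\beta q\|_{L^\infty}$ by $\|\p_y^\beta q\|_{L^2}^{1/2}\|\p_y^{\beta+1} q\|_{L^2}^{1/2}$) and in $x$. Finally, I would translate the bounds on $q$ back to $v_p = \bar u q$ and $u_p = -\p_x\int_0^y \bar u q$; the stated exponent bookkeeping ($\alpha + \beta/2 \le M_0+1$ for $v_p$, $\alpha + (\beta+1)/2 \le M_0+1$ for $u_p$, reflecting that one $\p_y$ costs ``half'' an $\p_x$ by parabolic scaling, and that $u_p$ carries one extra $\p_x$ from the stream-function relation) falls out of how many times one can afford to differentiate before the forcing hypotheses on $f$ and initial-data hypotheses on $V_0$ run out.

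\medskip

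The main obstacle I anticipate is controlling the boundary terms at $y = 0$ in the weighted energy estimates, and relatedly, propagating the compatibility conditions. When one differentiates the LDP equation in $x$ many times and integrates by parts, one generates traces of $\p_x^\alpha \p_y^\beta q$ at $y = 0$ that are not obviously zero; closing the estimate requires either exhibiting cancellations coming from the precise form of the $q$-equation near the boundary, or imposing exactly the right compatibility conditions at the corner $(0,0)$ (Definitions \ref{def:compatibility:condition} and \ref{def:integral:cond}) so that these traces vanish or are controlled by data. A secondary technical difficulty is maintaining the exponential weight $e^{Ny}$ through all the integrations by parts: each $\p_y$ hitting the weight produces a factor of $N$, and one must check that choosing $N$ large (as the hypothesis allows) genuinely helps rather than hurts — i.e., that the good diffusion term $\|\p_y q e^{Ny}\|_{L^2}^2$ dominates the weight-derivative terms $N^2 \|q e^{Ny}\|_{L^2}^2$ after using the decay of $\bar u - 1$ and its derivatives near $y = \infty$. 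Once these two points are handled, the Grönwall argument in $x$ and the Sobolev upgrades are routine.
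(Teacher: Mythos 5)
You are right that the whole proof pivots on the quotient $q = v_p/\bar{u}$, but the structure you assign to the resulting equation is not correct, and the error is fatal to the proposed energy scheme. Dividing by $\bar{u}$ does \emph{not} produce a uniformly parabolic equation $\bar{u}\p_x q + \bar v \p_y q - \p_{yy}q + \dots = \tilde f$ with bounded coefficients: your claim that the singular coefficients combine into $\bar u_y/\bar u$, ``which is bounded below and above by \eqref{coe.2}--\eqref{coe.1},'' is false, since $\bar u(x,0)=0$ and $\bar u_y(x,0)\ge m_0>0$ force $\bar u_y/\bar u \sim 1/y$ near the boundary. The equation one actually obtains (by applying $\p_y$ and then $\p_x$ to \eqref{Orig.Orig.Pr.intro} and recognizing the Rayleigh structure $-\bar u v_{yy}+v\bar u_{yy} = -\p_y\{\bar u^2 q_y\}$) is the degenerate fourth-order LDP equation \eqref{origPrLay.beta}, $-\p_{xy}\{\bar u^2 q_y\} + \p_y^4 v + \kappa\Lambda + \kappa U = \p_{xy}g_1$, whose principal coefficients vanish quadratically (resp.\ linearly) at $y=0$. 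Consequently the multiplier and the mechanism for closing the estimate are different from what you propose: one tests the $\p_x$-differentiated equation against $q_x$ (not $q\,e^{2Ny}$), which yields $\frac{\p_x}{2}\int\bar u^2 q_{xy}^2 + \int\bar u\,q_{xyy}^2$ plus the boundary term $(\bar u_y q_{xy},q_{xy})_{y=0}$. That boundary term does not vanish and is not forced to vanish by compatibility conditions --- it is \emph{kept}, and has a favorable sign precisely because $\bar u_y|_{y=0}>0$ (Lemma \ref{energy:estImate:1089}). Moreover, because the dissipation $\|\sqrt{\bar u}\,q_{xyy}\|$ is degenerate at $y=0$, terms such as $(\bar u_{xy}q_{yy},q_{xy})$ with $\bar u_{xy}|_{y=0}\neq 0$ cannot be absorbed by it; the paper handles them by bounding $\|q_{yy}\|\lesssim\|v_{yyy}\|$ near $y=0$ and reading off an independent estimate for $\|v_{yyyy}\|$ directly from the equation (Lemma \ref{locvyyyy}). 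Your scheme has no substitute for this step, and a Gr\"onwall argument based on a putative nondegenerate gain $\int_0^L\|\p_y q\,e^{Ny}\|^2$ would not close.

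Two further ingredients are missing. First, since the data are prescribed as $v_p|_{x=0}=V_0$ rather than $u_p|_{x=0}$, the trace $u^0=u_p|_{x=0}$ entering the operator $U(u^0)$ in \eqref{origPrLay.beta} must be \emph{reconstructed} by inverting the ODE \eqref{wknd}; this is solvable with decay only under the integral condition of Definition \ref{def:integral:cond}, which is therefore a structural hypothesis and not merely a device for killing corner traces. Second, the existence part requires a regularization $\bar u_\theta=\bar u+\theta$ and a Galerkin construction followed by uniform-in-$\theta$ estimates, since the degenerate problem cannot be solved directly. The bookkeeping of derivatives in the conclusion and the role of $L\ll 1$ and the weight $e^{Ny}$ are as you describe, but those are the routine parts; the degenerate weighted norms $\|\bar u q_{xy}\|$, $\|\sqrt{\bar u}q_{xyy}\|$, $\|v_{yyyy}w\|$ in \eqref{norm.X.layer} and the sign of the boundary term are the actual content of the proof, and they are absent from your outline.
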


\subsection{Inviscid limit of Navier-Stokes} \label{subsection:Inviscid:Limit}

We now describe the link of Theorems \ref{thm.main} and \ref{thm:LP} with the related and very important problem of describing the vanishing viscosity limit of Navier-Stokes flows. Consider the steady, incompressible Navier-Stokes equations with viscosity $\eps$ posed on the domain
\begin{align*}
\Omega := (0, L) \times \mathbb{R}_+, \text{ with coordinates } x \in (0, L) \text{ and } Y \in \mathbb{R}_+.  
\end{align*}

\noindent The equations for the velocity field $\bold{u}^{NS} := [u^{NS}, v^{NS}]$ and pressure $P^{NS}$ read 
\begin{align*}
&\bold{u}^{NS} \cdot \nabla \bold{u}^{NS} + \nabla P^{NS} = \eps \Delta \bold{u}^{NS}, \hspace{3 mm} \text{div}(\bold{u}^{NS}) = 0 \text{ on } \Omega
\end{align*}

\noindent with the following boundary conditions 
\begin{align*}
&\bold{u}^{NS}|_{Y = 0} = 0, \text{ (no-slip condition)}, \\
&\bold{u}^{NS}|_{Y \uparrow \infty} = [u^0_e(Y), 0], \text{ (convergence to an Euler flow) }, \\
&\bold{u}^{NS}|_{x = 0, L} = \text{ in-flow and out-flow provided through asymptotic expansion.}
\end{align*}

\noindent The shear Euler profile $u^0_e(Y)$ is given data. The in-flow and out-flow data are prescribed individually through the expansion (\ref{exp.u}). 

A fundamental question in fluid mechanics is to characterize the limit of $\bold{u}^{NS}$ as $\eps \downarrow 0$. Due to the mismatch of the no-slip boundary condition satisfied by $\bold{u}^{NS}$ for $\eps > 0$ and the typical no-penetration condition satisfied by a generic Euler flow, (which is $v^0_e|_{Y = 0} = 0$), one cannot expect $L^\infty$ convergence of $\bold{u}^{NS}$ to an Euler flow $[u^0_e(Y), 0]$. Instead, one expects convergence to what is now known as the Prandtl boundary layer. 

The first step in quantifying the asymptotic in $\eps$ behavior of $\bold{u}^{NS}$ is to rescale the normal variable, $Y$, and the normal component of the velocity, $v^{NS}$ according to 
\begin{align*}
&(x,y) = (x, \frac{Y}{\sqrt{\eps}}), \\
&[U^\eps(x,y), V^\eps(x,y), P^\eps(x,y)] = [U^{NS}(x,Y), \frac{v^{NS}(x,Y)}{\sqrt{\eps}}, P^{NS}(x,Y)].  
\end{align*}

\noindent The original, unscaled, variables $(x,Y)$ are known as ``Euler variables", whereas the new, scaled variables $(x,y)$ are known as ``Prandtl variables". 

One subsequently asymptotically expands $[U^\eps, V^\eps, P^\eps]$ in the following manner: 
\begin{align}
\begin{aligned} \label{exp.u}
&U^\eps = u^0_e + u^0_p + \sum_{i = 1}^n \sqrt{\eps}^i (u^i_e + u^i_p) + \eps^{N_0} u^{(\eps)} := u_s + \eps^{N_0} u^{(\eps)}, \\
&V^\eps = v^0_p + v^1_e + \sum_{i = 1}^{n-1} \sqrt{\eps}^i (v^i_p + v^{i+1}_e) + \sqrt{\eps}^n v^n_p + \eps^{N_0} v^{(\eps)} := v_s + \eps^{N_0} v^{(\eps)}, \\
&P^\eps = P^0_e + P^0_p + \sum_{i = 1}^n \sqrt{\eps}^i (P^i_e + P^i_p) + \eps^{N_0} P^{(\eps)} := P_s + \eps^{N_0} P^{(\eps)},
\end{aligned}
\end{align}

\noindent where the coefficients are independent of $\eps$. For our analysis, we will take $n = 4$ and $N_0 = 1+$ (this is motivated by \cite{GI}). First, the Euler flow $[u^0_e(Y), 0]$ is a given shear flow satisfying the assumptions delineated in Theorem \ref{thm.construct}. Next, $[u^i_e, v^i_e]$ are Euler correctors, which satisfy elliptic equations in the Euler variables $(x,Y)$. The terms $[u^i_p, v^i_p]$ are Prandtl correctors, which satisfy parabolic equations in Prandtl variables, $(x,y)$. $x$ behaves as a time-like variable whereas $y$ behaves as a space-like variable. 

Let us also introduce the following notation: 
\begin{align}
\begin{aligned} \label{intro.bar.prof}
 \bar{u}^i_p := u^i_p - u^i_p|_{y = 0}, \hspace{3 mm} \bar{v}^i_p := v^i_p - v^1_p|_{y = 0}, \hspace{3 mm} \bar{v}^i_e := v^i_e - v^i_e|_{Y = 0}.
\end{aligned}
\end{align}

\noindent The profile $\bar{u}^0_p, \bar{v}^0_p$ from (\ref{intro.bar.prof}) is classically known as the ``boundary layer"; one sees from (\ref{exp.u}) that it is the leading order approximation to the Navier-Stokes velocity, $U^\eps$. We sometimes use the notation $[\bar{u}, \bar{v}] = [\bar{u}^0_p, \bar{v}^0_p]$ due to the distinguished nature of the leading order boundary layer. Indeed, the quantity $[\bar{u}^0_p, \bar{v}^0_p]$ will solve the Prandtl equations, \eqref{Pr.leading}. As a matter of notation, we adopt the convention that for a given function, $f(x,y)$, the quantity $\bar{f} := f - f(x,0)$. 

The final layer, 
\begin{align*}
[u^{(\eps)}, v^{(\eps)}, P^{(\eps)}] = [\bold{u}^{(\eps)}, P^{(\eps)}].
\end{align*}

\noindent are called the ``remainders" and importantly, they depend on $\eps$. 
 
An important first step in understanding the asymptotic behavior of $(U^\eps, V^\eps, P^\eps)$ is to understand the approximate terms in the expansion (\ref{exp.u}), that is, all the terms aside from the remainders. One outcome of the present paper is that our analysis to establish Theorems \ref{thm.main}, \ref{thm:LP} will also help us to construct each term in the approximate solution and prove regularity estimates on these terms. Informally, our result below in this direction, Theorem \ref{thm.construct}, says given reasonably well-behaved boundary data on the sides $x = 0, L$, each of the terms in $[u_s, v_s, P_s]$ (see (\ref{exp.u})) can be constructed and are sufficiently regular. In order to state such a result precisely, we must introduce the equations satisfied by each term in $[u_s, v_s, P_s]$.

We now list the equations to be satisfied by the sub-leading order terms from (\ref{exp.u}). For $1 \le i \le n$, the $i$'th Euler layer satisfies the following system:
\begin{align} \label{des.eul.1.intro}
\left.
\begin{aligned}
&u^0_e \p_x u^i_e + \p_Y u^0_e v^i_e + \p_x P^i_e =: f^i_{E,1}, \\
&u^0_e \p_x v^i_e + \p_Y P^i_e  =: f^i_{E,2}, \\
&\p_x u^i_e + \p_Y v^i_e = 0, \\
&v^i_e|_{Y = 0} = - v^0_p|_{y = 0}, \hspace{5 mm} v^i_e|_{x = 0, L} = V_{E, \{0, L\}}^i \hspace{5 mm} u^i_e|_{x = 0} = U^i_{E}.
\end{aligned}
\right\}
\end{align}

\noindent In this case, since equation (\ref{des.eul.1.intro}) is elliptic in $(x,Y)$ we provide boundary data on both sides, $x = 0, L$. The given data for this problem is therefore the three functions $U^i_E(Y), V^i_{E,0}(Y), V^i_{E,L}(Y)$. The forcing terms $f^i_{E,1}, f^i_{E,2}$ are specifically given in Definition \ref{def.forcing}, and should be regarded as given for the purposes of stating the result. 

For $1 \le i \le n-1$, the $i$'th Prandtl layer satisfies 
\begin{align} \label{des.pr.1.intro}
\left.
\begin{aligned}
&\bar{u} \p_x u^i_p + u^i_p \p_x \bar{u} + \p_y \bar{u} [v^i_p - v^i_p|_{y = 0}] + \bar{v} \p_y u^i_p + \p_x P^i_p - \p_{yy} u^i_p := f^{(i)}, \\  
& \p_x u^i_p + \p_y v^i_p = 0,  \hspace{5 mm} \p_y P^i_p = 0\\  
& u^i_p|_{y = 0} = -u^i_e|_{y = 0}, \hspace{5 mm} [u^i_p, v^i_p]_{y \rightarrow \infty} = 0, \hspace{5 mm} v^i_p|_{x = 0} = V^i_P. 
\end{aligned}
\right\}
\end{align}

\noindent In this case, equation (\ref{des.pr.1.intro}) is parabolic, with $x$ controlling the evolution. As a result, the given data is the function $V^i_P(y)$. The forcing term, $f^{(i)}$, is defined in Definition \ref{def.forcing}. It should be regarded as given for the present discussion. For $i = n$, the difference is that $v^n_p$ will be defined so as to satisfy $v^n_p|_{y = 0} = 0$, unlike $v^i_p$ in (\ref{des.pr.1.intro}), and subsequently cut-off as $y \uparrow \infty$. The key point is that system \eqref{des.pr.1.intro} is of the form considered in \eqref{Orig.Orig.Pr.intro} and so we can apply Theorem \ref{thm:LP}.

\begin{theorem}[Construction of Approximate Navier-Stokes Solution] \label{thm.construct} Assume the shear flow $u^0_e(Y) \in C^\infty$, whose derivatives decay rapidly, and which is bounded above and below: $c_0 \le u^0_e \le C_0$ for some universal constants $0 < c_0 < C_0 < \infty$. Assume (\ref{OL.1}) regarding $\bar{U}^0_P$, and the conditions
\begin{align}  \label{compatibility.1.fin}
& \bar{V}^i_{Pyyy}(0) = \p_x g_1|_{x = 0, y = 0}, \\ \label{compatibility.2.fin}
&\bar{V}^i_P|'''(0) = \p_{xy}g_1|_{x = 0, y =0}, \\ \label{integral.cond.intro}
&\bar{U}^0_{P y}(0) U^{i}_{E,0}(0) - \int_0^\infty \bar{U}^0_{P} e^{-\int_1^y \bar{v}^0_{p}} \{f^{(i)}(y) - r^{(i)}(y) \} \ud y  = 0,
\end{align}

\noindent where $r^{(i)}(y) := \bar{V}^i_P \bar{U}^0_{P y} - \bar{U}^0_{P} \bar{V}^i_{Py}$, and $g^{(i)} = g^{(i)}(f^{(i)}, u^i_e|_{Y = 0})$ is an explicit quantity depending on the forcing $f^{(i)}$ and the boundary data $u^i_e|_{Y = 0}$, and is defined in (\ref{origPrLay}).  Let $V^i_{E,0}, V^i_{E,L}, U^i_E$  be prescribed smooth and rapidly decaying Euler data. We assume on the data standard elliptic  compatibility conditions at the corners $(0,0)$ and $(L,0)$. In addition, assume 
\begin{align}
&V^1_{E,0} \sim Y^{-m_1} \text{ or } e^{-m_1 Y} \text{ for some } 0 < m_1 < \infty,\\
& \|\p_Y^k \{ V^i_{E,0} - V^i_{E, L} \} \langle Y \rangle^M\|_\infty \lesssim L
\end{align}

\noindent Then, for $L << 1$, all profiles in $[u_s, v_s]$ exist and are smooth on $\Omega$. The following estimates hold: 
\begin{align}
\begin{aligned} \label{prof.pick}
&\bar{u}^0_p > 0,\bar{u}^0_{py}|_{y = 0} > 0, \bar{u}^0_{p yy}|_{y = 0} = \bar{u}^0_{p yyy}|_{y = 0} = 0 \\
&\| \nabla^K \{ u^0_p, v^0_p\} e^{My} \|_\infty \lesssim 1 \text{ for any } K \ge 0, \\
&\|u^i_p, v^i_p \|_\infty + \| \nabla^K  u^i_p  e^{My} \|_\infty + \| \nabla^K v^i_p e^{My} \|_\infty \lesssim 1 \text{ for any } K \ge 1, M \ge 0, \\
&\| \nabla^K \{u^1_e, v^1_e\}  w_{m_1} \|_\infty \lesssim 1 \text{ for some fixed } m_1 > 1  \\
&\| \nabla^K \{u^i_e, v^i_e\} w_{m_i}\|_\infty \lesssim 1 \text{ for some fixed } m_i > 1,
\end{aligned}
\end{align}

\noindent where $w_{m_i} \sim e^{m_i Y}$ or $(1+Y)^{m_i}$. 

In addition the following estimate on the remainder forcing holds: 
\begin{align} \label{thm.force.maz}
\| F_R|_{x = 0} w_0 \|_{L^2} + \| \p_x F_R \frac{w_0}{\sqrt{\eps}} \|_{L^2}  \lesssim \sqrt{\eps}^{n-1-2N_0},
\end{align}
\noindent where $F_R$ is the quantity defined in (\ref{forcingdefn}).
\end{theorem}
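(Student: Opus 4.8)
The plan is to construct the terms in $[u_s, v_s, P_s]$ inductively in the order dictated by the expansion \eqref{exp.u}, alternating between solving an Euler-corrector system \eqref{des.eul.1.intro} and a Prandtl-corrector system \eqref{des.pr.1.intro}, and then to verify the remainder-forcing estimate \eqref{thm.force.maz} by directly bounding the residual $F_R$ in terms of the profiles just constructed. First I would treat the leading-order boundary layer $[\bar u^0_p, \bar v^0_p] = [\bar u, \bar v]$: existence and the low-regularity bounds \eqref{coe.2}--\eqref{coe.1} come from Oleinik's Theorem \ref{thm.Oleinik}, while the high-regularity weighted bound $\|\nabla^K\{u^0_p,v^0_p\} e^{My}\|_\infty \lesssim 1$ for all $K$ is precisely the content of Theorem \ref{thm.main}, applied with $M_0$ taken large enough (depending on $K$ and on the number of correctors $n=4$ that will be built on top of it). The sign and vanishing conditions in \eqref{prof.pick} are inherited directly from \eqref{OL.1} via Theorem \ref{thm.Oleinik}.

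Next I would run the induction on $i$. Given that all profiles of order $<i$ have been constructed with the stated weighted bounds, the forcing terms $f^i_{E,1}, f^i_{E,2}$ (for the Euler system) and $f^{(i)}$ (for the Prandtl system), which by Definition \ref{def.forcing} are explicit differential-polynomial expressions in the lower-order profiles, inherit rapid decay and the requisite Sobolev regularity. For the Euler corrector $[u^i_e, v^i_e]$ one solves the elliptic system \eqref{des.eul.1.intro} on the bounded strip $\Omega$ with data on $x=0,L$; here the smallness of $L$, together with the lower bound $c_0 \le u^0_e \le C_0$ and the elliptic compatibility conditions at the corners $(0,0)$ and $(L,0)$, gives a unique smooth solution, and the weighted decay $w_{m_i} \sim e^{m_i Y}$ or $(1+Y)^{m_i}$ is propagated from the data hypotheses $V^1_{E,0}\sim Y^{-m_1}$ or $e^{-m_1 Y}$ and $\|\p_Y^k\{V^i_{E,0}-V^i_{E,L}\}\langle Y\rangle^M\|_\infty \lesssim L$. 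For the Prandtl corrector $[u^i_p,v^i_p, P^i_p]$, the system \eqref{des.pr.1.intro} is exactly of the linearized form \eqref{Orig.Orig.Pr.intro} with background $[\bar u,\bar v]$ and boundary datum $u_0(x) = -u^i_e|_{y=0}$, so Theorem \ref{thm:LP} applies once one checks its hypotheses: the compatibility conditions \eqref{compatibility.1.fin}--\eqref{compatibility.2.fin} are exactly Definitions \ref{def:compatibility:condition}, the integral/solvability condition \eqref{integral.cond.intro} is exactly Definition \ref{def:integral:cond} (this is the orthogonality that makes the degenerate transport--diffusion operator solvable), and the weighted $L^2$ bounds on $V^i_P$ and on $f^{(i)}$ required by Theorem \ref{thm:LP} follow from the induction hypothesis plus the decay of the Euler data. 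Theorem \ref{thm:LP} then yields the $L^\infty$ and weighted $L^\infty$ bounds on $\p_x^\alpha\p_y^\beta u^i_p, v^i_p$, which after converting from $x$-regularity to full $\nabla^K$ regularity using the equation \eqref{des.pr.1.intro} itself (trading each $\p_{yy}$ for $\bar u\p_x + \text{lower order}$) gives the bounds $\|\nabla^K u^i_p e^{My}\|_\infty + \|\nabla^K v^i_p e^{My}\|_\infty \lesssim 1$ in \eqref{prof.pick}; the case $i=n=4$ differs only in redefining $v^n_p$ to vanish at $y=0$ and cutting it off for large $y$, which costs nothing in these norms.

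Finally, with all approximate profiles in hand, I would establish \eqref{thm.force.maz}. The remainder forcing $F_R$ defined in \eqref{forcingdefn} collects exactly the terms left over when $[u_s,v_s,P_s]$ is plugged into the rescaled Navier--Stokes equations; by construction of the hierarchy, the lowest surviving power of $\sqrt\eps$ is $\sqrt\eps^{\,n-1}$ (the leftover comes from the $\eps\Delta$ term acting on the top-order profile and from the incomplete cancellation at order $n$), and each such term is a product of the profiles above, all of which carry the exponential weight $w_0 \sim e^{My}$. Bounding $\|F_R|_{x=0} w_0\|_{L^2}$ and $\|\p_x F_R \frac{w_0}{\sqrt\eps}\|_{L^2}$ is then a matter of distributing the weights and the already-established $L^\infty$/weighted bounds across each monomial; the extra $\eps^{-N_0}$ factors in the stated bound $\sqrt\eps^{\,n-1-2N_0}$ reflect the $\eps^{N_0}$ normalization of the remainder layer and the one derivative $\p_x$ converting to a $\frac{1}{\sqrt\eps}$ in Euler variables. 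The main obstacle I anticipate is not the Euler or remainder step but the verification, at each order $i$, that the solvability condition \eqref{integral.cond.intro} can actually be met --- i.e.\ that the free constants in the Euler data (here $U^i_{E,0}(0)$) can be chosen to kill the integral obstruction $\int_0^\infty \bar U^0_P e^{-\int_1^y \bar v^0_p}\{f^{(i)}-r^{(i)}\}\,dy$; this requires the non-degeneracy $\bar U^0_{Py}(0) > 0$ from \eqref{prof.pick} and a careful bookkeeping of how $f^{(i)}$ and the boundary data $u^i_e|_{Y=0}$ enter, which is where the bulk of the technical care will go.
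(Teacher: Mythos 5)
Your proposal is correct and follows essentially the same route as the paper: the paper's proof is a one-line assembly of Proposition \ref{thm.diff} (linearized Prandtl regularity via the quotient $q=v/\bar u$) for the Prandtl correctors, Proposition \ref{L.e.constr}, Corollary \ref{Cor.cheap.quotient} and Lemma \ref{Quotient.Lemma} for the Euler correctors, and Lemma \ref{lemma:higher:y} to upgrade $y$-regularity, exactly the alternating Euler/Prandtl induction you describe. The only point of emphasis that differs is that the integral condition \eqref{integral.cond.intro} is taken as a hypothesis on the data in the theorem (used to reconstruct $u^0$ as in Definition \ref{def:integral:cond}) rather than something one must arrange by tuning $U^i_{E,0}(0)$, so the "main obstacle" you flag is assumed away rather than verified.
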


These constructions and regularity theory of the approximate solution to Navier-Stokes are of intrinsic interest. We also remark that another motivation for establishing our result is that these leading order constructions are important from the point of view of applications to the validity theory. Indeed, the estimates that we establish in this paper are in use in the works \cite{GI}, \cite{GIb}. 

\subsection{Notations}

We adopt several notation conventions throughout the paper. First, for functions $f(x,y)$, we define the norm $\|f \| := \| f \|_{L^2} = (\int_0^L \int_0^\infty f(x, y)^2 \ud y \ud x)^{\frac 1 2}$. Next, we define the antiderivative via $I_x[f]:= \int_0^x f(x',y) \ud x'$. 

\subsection{Main Ideas}

The key ingredient in the analysis of \eqref{origPrLay.beta} is a quotient estimate for $q_x$. We illustrate the main estimates for the simplified equation:  
\begin{align} \label{eq:intro:model}
- \p_{xy} \{ \bar{u}^2 q_y \} + v_{yyyy} = 0. 
\end{align}

By taking $\p_x$ of the main part of \eqref{origPrLay.beta}, taking the inner-product with $q_x$ (Lemma \ref{energy:estImate:1089}), and rearranging the main contributions, we obtain (upon omitting at each line below easy to estimate terms)
\begin{align} \n
&- (\p_{xy} \{ \bar{u}^2 q_{xy} \}, q_x) + (v_{xyyyy}, q_x) \\ \n
\sim & \frac{\p_x}{2} \int \bar{u}^2 q_{xy}^2 + (v_{xyy}, q_{xy})_{y = 0} + (v_{xyy}, q_{xyy}) \\ \n
\sim & \frac{\p_x}{2} \int \bar{u}^2 q_{xy}^2 + (\bar{u} q_{xyy} + 2 \bar{u}_y q_{xy} + \bar{u}_{yy}q, q_{xy})_{y = 0} \\ \n
& + (\bar{u} q_{xyy} + 2 \bar{u}_y q_{xy} + 2 \bar{u}_{xy} q_y, q_{xyy}) \\ \n
\sim & \frac{\p_x}{2} \int \bar{u}^2 q_{xy}^2 + \int \bar{u} q_{xyy}^2  + (2 \bar{u}_y q_{xy}, q_{xy})_{y = 0}  - ( \bar{u}_y q_{xy}, q_{xy})_{y = 0} \\ \n
&  - (\bar{u}_{xy} q_y, q_{xyy}) \\ \n
\sim & \frac{\p_x}{2} \int \bar{u}^2 q_{xy}^2 + \int \bar{u} q_{xyy}^2 + (\bar{u}_y q_{xy}, q_{xy})_{y = 0} + (\bar{u}_{xyy}q_y, q_{xy}) \\ \label{positive:energy:intro}
& + (\bar{u}_{xy} q_{yy}, q_{xy}) - (\bar{u}_{xy} q_y, q_{xy})_{y = 0}
\end{align}

\noindent The key positive boundary contribution $(\bar{u}_y q_{xy}, q_{xy})_{y = 0}$ holds naturally for Prandtl solutions as $\bar{u}_y|_{y = 0} > 0$. 

Next, for the term $(\bar{u}_{xy} q_{yy}, q_{xy})$ the key point is that $\bar{u}_{xy}|_{y = 0} \neq 0$. It is thus important to avoid the degeneracy at $y = 0$ in the term $\| \bar{u} q_{xy} \|$ by invoking a higher order norm term: 
\begin{align}
\| q_{yy} \| \lesssim \| v_{yyy} \| \text{ for } y << 1.
\end{align}

\noindent In order to close, we must further estimate $v_{yyyy}$ from the equation \eqref{eq:intro:model}, which gives (Lemma \ref{locvyyyy}
\begin{align}
\| v_{yyyy} \| \le \| \p_{xy} \{ \bar{u}^2 q_y \} \|.
\end{align}
Finally, we establish decay in $y$ in Lemma \ref{Hkestimate:8338}. 

We introduce 
\begin{align} \label{def:bar:u:theta}
\bar{u}^\theta := \bar{u} + \theta > 0
\end{align}

\noindent in our construction of approximate solutions and regard LDP \eqref{origPrLay.beta} as an initial value problem for given $v|_{x = 0}$ or $q|_{x = 0}$. We finally recover $u^0 = u|_{x = 0}$ and solve the original system, \eqref{Orig.Orig.Pr.intro}, via a necessary integrability condition \eqref{integral.cond.intro}. 

 The first three positive quantities in \eqref{positive:energy:intro} form our basic energy norm, $\| \cdot \|_{\mathcal{E}}$ in \eqref{norm.X.layer}. The remainder of our analysis is centered around propagating control over a slightly stronger norm, $X$, and upgrading to higher $\p_x$ derivatives. We note here that similar quotient estimates to those developed in this paper have played a role in establishing the validity of the Prandtl layer expansion, see for instance \cite{GI}, \cite{GIb}, \cite{Iyer-Global}.

\section{Regularity for the Prandtl and linearized Prandtl equations}

Our objective in this section is to establish Theorems \ref{thm.main} and \ref{thm:LP}, both of which will follow from the study of the system \eqref{Orig.Orig.Pr.intro}. 

\subsection{Linearized Derivative Prandtl Equation (LDP)}

We homogenize the system \eqref{Orig.Orig.Pr.intro} to remove the boundary condition $u|_{x = 0} = u_0(x)$ via: 
\begin{align} \label{antiPsi}
u = u_p - u_0(x) \psi(y), \hspace{3 mm} v = v_p + u_{0x}(x) I_\psi(y), \hspace{3 mm} I_\psi(y) := \int_y^\infty \psi(\theta) \ud \theta. 
\end{align}

\noindent  Here, we select $\psi$ to be a $C^\infty$ function satisfying the following: 
\begin{align} \label{psi.spec}
\psi(0) = 1, \hspace{3 mm} \int_0^\infty \psi = 0, \hspace{3 mm} \psi \text{ decays as } y \uparrow \infty, \hspace{3 mm} \psi^{(k)}(0) = 0 \text{ for } k \ge 1. 
\end{align}

\noindent  According to \eqref{Orig.Orig.Pr.intro}, the homogenized unknowns $[u,v]$ satisfy the system: 
\begin{align}
\begin{aligned} \label{origPrLay}
&\bar{u} \p_x u + u \p_x \bar{u} + \bar{v} \p_y u + v \p_y \bar{u} - \p_{yy} u = : g_1, \\
&u_x + v_y = 0, \\
&[u, v]|_{y = 0} = 0, \hspace{3 mm} \lim_{y \rightarrow \infty} u(x, y) = 0, \hspace{3 mm} v|_{x = 0} = \bar{V}_0(y),
\end{aligned}
\end{align}
where the following quantities have been introduced: 
\begin{align}
\begin{aligned}
&g_1 := f + G, \\
&G := - \bar{u} \psi u_{0x} - \bar{u}_{x} \psi u_0 - \bar{v} \psi' u_0 - \bar{u}_{y} u_{0x} I_\psi + \psi'' u_0, \\
&\bar{V}_0(y) := V_0 -u_0(0) I_\psi(y).
\end{aligned}
\end{align}

\noindent  By applying $\p_y$, we obtain the system: 
\begin{align}  \label{eval.2}
- \bar{u} v_{yy} + v \bar{u}_{yy} - u \bar{v}_{yy} + \bar{v} u_{yy} - u_{yyy} = \p_y g_1.
\end{align}

\noindent We rewrite the Rayleigh operator as 
\begin{align}
- \bar{u} v_{yy} + v \bar{u}_{yy} = - \p_y \{ \bar{u}^2 q_y \}, \hspace{3 mm} q := \frac{v}{\bar{u}}.
\end{align}

\noindent  By further taking $\p_x$ we derive the following linear Derivative Prandtl Equation (LDP) for the quotient, $q$, which is the main focus of our paper:
\begin{align}
\begin{aligned} \label{origPrLay.beta}
&- \p_{xy} \{ \bar{u}^2 q_y \} + \p_y^4 v + \kappa \Lambda + \kappa U   = \p_{xy} g_1, \\
&q|_{y = 0} = 0, \hspace{3 mm} q|_{x = 0} = \frac{1}{\bar{u}}|_{x= 0}(y) \bar{V}_0(y) := f_0(y). 
\end{aligned}
\end{align}

\noindent We have introduced the artificial parameter $\kappa$ above. The system of interest is $\kappa = 1$. The reason for including this artificial parameter is, in Proposition \ref{Prop.1} below, we also treat the $\kappa = 0$ case of nonlinear Prandtl, \eqref{Pr.leading}. We have defined the following operators: 
\begin{align} \label{def:Lambda}
&\Lambda(v) := \bar{v}_{xyy}I_x[v_y] + \bar{v}_{yy} v_y - \bar{v}_{x}I_x[v_{yyy}] - \bar{v} v_{yyy}, \\ \label{def:op:U}
&U(u^0) := - \bar{v}_{xyy}u^0 + \bar{v}_{x}u^0_{yy}.
\end{align}

We now state our basic proposition for the homogenized system. To do so, we need to specify norms in which we measure the quantity, $q$, as well as the forcing, $f$. Let $\chi$ denote the following cut-off function: 
\begin{align}\label{basic.cutoff}
\chi(y) = \begin{cases} 1 \text{ on } 0 \le y \le 1 \\ 0 \text{ on } y \ge 2\end{cases} \hspace{5 mm} \chi'(y) \le 0 \text{ for all }y. 
\end{align}
 Fix $w = e^{Ny}$ for the $N$ from Theorem \ref{thm.main}. Denote by $q^{(k)} := \p_x^k q$. We will now define several norms: 
\begin{align} 
\begin{aligned} \label{norm.X.layer}
&\| q \|_X :=  \sup_{0 \le x \le L} \Big( \|\bar{u} q_{xy} \|_{L^2_y} + \|q_{yyy} w (1 - \chi) \|_{L^2_y} \Big)  + \| \sqrt{\bar{u}} q_{xyy} w \| + \| v_{yyyy} w \| \\ 
& \qquad \qquad + \| \sqrt{\bar{u}_y} q_{xy} \|_{y = 0}, \\
&\| q \|_{\mathcal{E}} := \sup_{0 \le x \le L} \| \bar{u} q_{xy} \|_{L^2_y} + \| \sqrt{\bar{u}} q_{xyy} \| + \| \sqrt{\bar{u}_y} q_{xy} \|_{y = 0}\\
&\| q \|_{\mathcal{H}} := \sup_{0 \le x \le L} \| q_{yyy} w\{1 - \chi \}\|_{L^2_y} + \| v_{yyyy} w \{1 - \chi\} \| + \| q_{xyy} w \{1 - \chi\} \| \\
& \| q \|_{X_k} := \| q^{(k)} \|_X,  \hspace{3 mm} \| q \|_{\mathcal{E}_k} := \| q^{(k)} \|_{\mathcal{E}}, \hspace{3 mm} \| q \|_{\mathcal{H}_k} := \| q^{(k)} \|_{\mathcal{H}}, \\
& \| q \|_{X_{\langle k \rangle}} = \sum_{i = 0}^k \| q \|_{X_{i}}.
\end{aligned}
\end{align}

We also define the following norm in which we control the forcing term: 
\begin{align}
\| f \|_{\tilde{X}_{\langle k \rangle}} = \| e^{Ny} \p_x^{k} \p_{xy}f  \| + \| \langle y \rangle \p_x \p_x^k \p_{xy} f \|. 
\end{align}

\begin{proposition} \label{thm.diff} Let the initial data, $V_0(y)$, the forcing, $f$, and the boundary data, $u_0(x)$,  satisfy the compatibility conditions (\ref{compatibility.1}) and (\ref{compatibility.2}). Assume also the integral condition, (\ref{integral.cond}). Then there exists a unique solution to (\ref{Orig.Orig.Pr.intro}) satisfying the following estimate: 
\begin{align*}
\| q \|_{X_{\langle k \rangle}} \lesssim \| f \|_{\tilde{X}_{\langle k \rangle}} + C(V_0, u_0). 
\end{align*}
\end{proposition}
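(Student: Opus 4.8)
The plan is to establish Proposition \ref{thm.diff} by an energy-estimate/continuity-argument strategy applied to the LDP system \eqref{origPrLay.beta}, working inductively in the number of $\p_x$ derivatives $k$. The skeleton is as follows. First I would set up an a priori estimate at the base level ($k=0$): multiply the main part of \eqref{origPrLay.beta} by $q_x$ and integrate over $(0,x)\times\R_+$, performing the integrations by parts sketched in the introduction (this is the content of the referenced Lemma \ref{energy:estImate:1089}). The crucial structural facts to exploit are: (i) the leading term $-(\p_{xy}\{\bar u^2 q_{xy}\},q_x)$ produces the positive spacetime quantity $\tfrac12\int\bar u^2 q_{xy}^2$ at time $x$ plus the interior term $\int\bar u q_{xyy}^2$; (ii) the boundary term $(\bar u_y q_{xy},q_{xy})_{y=0}$ is \emph{positive} because $\bar u_y|_{y=0}>0$ by Oleinik's estimate \eqref{coe.1}; and (iii) the term $(v_{xyyyy},q_x)$ from $\p_y^4 v$ can be integrated by parts twice, producing the boundary contribution matched against $(v_{xyy},q_{xyy})$ as in the display \eqref{positive:energy:intro}. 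This yields control of the $\mathcal E$-norm by the forcing and a small (in $L$) multiple of $\|q\|_{X}$-type quantities.

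Second, I would upgrade from $\mathcal E$ to the full $X$-norm, which additionally contains the weighted higher-order pieces $\|q_{yyy}w(1-\chi)\|$, $\|\sqrt{\bar u}q_{xyy}w\|$, and $\|v_{yyyy}w\|$. The weighted estimates come from redoing the energy estimate against $q_x w^2$ (or the analogous weighted test function) and tracking the extra commutator terms $[\p_y, w]$, which are lower order and absorbable since $w=e^{Ny}$ with $N$ large; the decay in $y$ is precisely Lemma \ref{Hkestimate:8338}. To control $v_{yyyy}$ I would invoke Lemma \ref{locvyyyy} (the elliptic-type bound $\|v_{yyyy}w\|\lesssim\|\p_{xy}\{\bar u^2 q_y\}w\|$ read off from the equation), and to deal with the non-degeneracy issue at $y=0$ in the term $(\bar u_{xy}q_{yy},q_{xy})$ — where $\bar u_{xy}|_{y=0}\neq 0$ — use the interpolation/elliptic bound $\|q_{yy}\|\lesssim\|v_{yyy}\|$ for $y\ll1$ flagged in the Main Ideas, trading the degenerate weight $\bar u$ for a genuinely higher-order norm. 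The lower-order operators $\kappa\Lambda(v)$ and $\kappa U(u^0)$ from \eqref{def:Lambda}–\eqref{def:op:U} are handled as perturbations: they involve at most three $y$-derivatives and the profiles $\bar v_{xyy}$ etc.\ are bounded and exponentially decaying by Theorem \ref{thm.construct} / Theorem \ref{thm.main}, so they contribute $\lesssim L\,\|q\|_X+\|f\|_{\tilde X}$ terms.

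Third, for general $k$, I would apply $\p_x^k$ to \eqref{origPrLay.beta} and repeat the same energy estimate with test function $q^{(k)}_x$. The new feature is the commutators $[\p_x^k,\bar u^2\p_y]q_y$ and similar, which distribute $\p_x$ derivatives onto the profile $\bar u$; each such term has strictly fewer $\p_x$-derivatives landing on $q$, hence is controlled by $\|q\|_{X_{\langle k-1\rangle}}$ from the inductive hypothesis (times bounds on $\p_x^j\bar u$, which are supplied by Theorem \ref{thm.main} for $2\alpha+\beta\le M_0$, matching the range $k\le$ roughly $M_0$ implicit in the statement). Summing the geometric-in-$L$ smallness over the finitely many levels $0,\dots,k$ gives $\|q\|_{X_{\langle k\rangle}}\lesssim \|f\|_{\tilde X_{\langle k\rangle}}+C(V_0,u_0)$, where the initial-data constant arises from evaluating the energy identity at $x=0$ using $q|_{x=0}=f_0$ and the assumed $L^2$ bounds \eqref{assume:V0:thm:LP} on $V_0$ and its derivatives. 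Existence and uniqueness then follow by the standard route: uniqueness is immediate from the a priori estimate applied to the difference of two solutions (linearity), and existence comes from a Galerkin or viscosity/regularization scheme — here the parameter $\bar u^\theta=\bar u+\theta$ from \eqref{def:bar:u:theta} is introduced to remove the degeneracy at $y=0$, one solves the nondegenerate problem, obtains $\theta$-uniform $X_{\langle k\rangle}$ bounds by exactly the estimates above, and passes $\theta\downarrow 0$; finally one recovers $u^0=u|_{x=0}$ and hence a genuine solution of \eqref{Orig.Orig.Pr.intro} via the integral/compatibility conditions \eqref{compatibility.1}, \eqref{compatibility.2}, \eqref{integral.cond}.

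\textbf{Main obstacle.} I expect the hard part to be the weighted higher-order estimate — specifically, simultaneously (a) closing the estimate for $\|v_{yyyy}w\|$ and $\|\sqrt{\bar u}q_{xyy}w\|$ without a loss of derivatives, given that the equation \eqref{origPrLay.beta} has $\p_y^4 v$ but the natural energy only sees $q_{xyy}$ and $\bar u q_{xy}$, and (b) managing the boundary term $(\bar u_{xy}q_{yy},q_{xy})_{y=0}$ and its bulk counterpart near $y=0$, where the favorable degeneracy of $\bar u$ is absent precisely because $\bar u_{xy}|_{y=0}\neq0$. Overcoming this requires the careful coupling of the degenerate energy norm with the local non-degenerate elliptic bounds $\|q_{yy}\|\lesssim\|v_{yyy}\|$ and $\|v_{yyyy}\|\lesssim\|\p_{xy}\{\bar u^2 q_y\}\|$ for $y\ll1$, together with smallness of $L$ to absorb the resulting cross terms; getting the bookkeeping of which norm controls which term exactly right, across all $\p_x$-levels, is the technical heart of the argument.
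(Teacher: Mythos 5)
Your proposal follows essentially the same route as the paper: the degenerate energy estimate for $q_x$ exploiting $\bar u_y|_{y=0}>0$ (Lemma \ref{energy:estImate:1089}), the local elliptic bound for $v_{yyyy}$ read off from the equation (Lemma \ref{locvyyyy}), a weighted far-field estimate (Lemma \ref{Hkestimate:8338}), induction in the number of $\p_x$-derivatives with commutators absorbed into $X_{\langle k-1\rangle}$, and existence via the $\bar u_\theta=\bar u+\theta$ regularization plus a Galerkin scheme with $\theta$-uniform bounds. The only substantive deviation is in the weighted step: the paper does not test against $q_xw^2$ but instead squares the equation against $w(1-\chi)$ localized away from $y=0$ — for $w=e^{Ny}$ the commutator $[\p_y,w]$ is of the \emph{same} order (times $N$), not lower order as you claim, which is precisely why the paper's version of this estimate carries the cutoff $(1-\chi)$ and extracts the positive terms algebraically rather than by a weighted energy identity.
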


Theorem \ref{thm.main} is a corollary of Proposition \ref{thm.diff} upon differentiating the Prandtl equation, (\ref{Pr.leading}), in $x$. Theorem \ref{thm:LP} is also an immediate corollary of Proposition \ref{thm.diff}. 

\subsection{Compatibility Conditions and Reconstruction of $u^0$}  \label{appendix.prandtl}

It is possible and natural to solve \eqref{Orig.Orig.Pr.intro} for a given initial condition $u^0 = u|_{x = 0}$, instead of a given $v^0$. In such a formulation, no integrability condition, \eqref{integral.cond}, is needed, but $v^0$ must be determined from solving 
\begin{align}
-\bar{u} v_y + \bar{v} u^0_y + \bar{u}_y v + \bar{u}_x u^0 - u^0_{yy} = g_1 \text{ at } x= 0. 
\end{align}

\noindent In this case, the compatibility conditions are $\p_x^k v^0(0) = \p_x^k v^0_y(0) = 0$, where 
\begin{align}
&\p_x^k v^0 := \p_x^k \Big\{ - \bar{u} \int_0^y \frac{1}{\bar{u}} \{ \bar{v} u^0_y - u^0 \bar{u}_x + u^0_{yy} + g_1 \} \Big\}, \\
&\p_x^k v^0_y := \p_x^k \Big\{ - \bar{u} \int_0^y \frac{1}{\bar{u}} \{ \bar{v} u^0_y - u^0 \bar{u}_x + u^0_{yy} + g_1 \} \Big\}_y
\end{align}

However, in our $v$-formulation, it is interesting to reconstruct $u^0$ via a given initial condition $V_0$. The purpose of this subsection is to reconstruct $u^0$ from $V_0$ under a necessary integrability condition on $V_0$, \eqref{integral.cond}. 

Our aim now is to derive compatibility conditions for the initial data. By computing $\p_x$ of (\ref{origPrLay}) and evaluating at $y = 0$, we obtain the condition: 
\begin{align*}
v_{yyy}|_{y = 0}(x) = \p_x g_1|_{y = 0}(x), \qquad x \in (0,L). 
\end{align*}
The first order compatibility condition that arises from this entails matching now the initial datum evaluated at $y = 0$ with the boundary datum evaluated at $x = 0$ in the standard manner for initial-boundary value problems: 
\begin{align}
V_0'''(0) = \bar{V}_0'''(0) = \p_x g_1|_{y = 0}(0). 
\end{align}
The first equality above follows from the property of $\psi'''(0) = 0$, whereas the second equality above is the natural outcome of the aforementioned matching process.  

We also require the second-order compatibility which can be obtained as follows. Taking $\p_x$ of (\ref{eval.2}): 
\begin{align*}
- \p_x \{ - \bar{u} v_{yy}+ v \bar{u}_{yy} + \bar{v} u_{yy} - u \bar{v}_{yy} \} + \p_y^4 v = \p_{xy}g_1. 
\end{align*}

\noindent  Evaluating at $y = 0$ gives the identity: 
\begin{align*}
\p_y^4 v|_{y = 0}(x) = \p_{xy} g_1|_{y = 0}(x) \text{ for } x \in (0,L).
\end{align*}
This then gives our second order compatibility condition of 
\begin{align}
\p_y^4 V_0(0) = \p_y^4 \bar{V}_0(0) = \p_{xy}g_1|_{y = 0}(0). 
\end{align}

This derivation motivates our definition of compatibility conditions, which we state formally now.
\begin{definition}[Compatibility at Corners] \label{def:compatibility:condition} The first and second order compatibility conditions are given by 
\begin{align}
\p_y^3 V_0(0)  = \p_x g_1|_{y = 0}(0), \qquad \p_y^4 V_0(0) = \p_{xy}g_1|_{y = 0}(0). 
\end{align}
Higher order compatibility conditions can be derived in the same manner, and are stated implicitly as follows
\begin{align} \label{compatibility.1}
& \p_y^3 (\p_x^k v|_{x = 0})(0) = \p_x^{k+1} g_1|_{y = 0}(0) \\ \label{compatibility.2}
&\p_y^4 (\p_x^k v|_{x = 0})(0)  = \p_x^{k+1} \p_y g_1|_{y =0}(0). 
\end{align} 
\end{definition}

\noindent Starting from the $q$ formulation in (\ref{origPrLay.beta}), we will further distribute on the Rayleigh term: 
\begin{align*}
- \p_y \{ \bar{u}^2 q_{xy} \} - \p_y \{2 \bar{u} \bar{u}_{x} q_y \} + \p_y \p_x \{ \bar{v} u_y - u\bar{v}_{y} \} + \p_y \p_y^3 v = \p_y \p_x g_1. 
\end{align*}

\noindent  We now compute at $\{x = 0\}$:
\begin{align} \n
\bar{u}^2 q_{xy} = &- \int_y^\infty \p_y\{ \bar{u}^2 q_{xy} \} \ud y' \\  \n
= & \int_y^\infty \p_y \Big\{ \p_x g_1 - \p_y^3 v + 2 \bar{u} \bar{u}_{x} q_y - \bar{v}_{x} u^0_y + \bar{v}  v_{yy} - v_y \bar{v}_{y} + u^0 \bar{v}_{xy}  \Big\} \ud y' \\ \label{bahumbug}
= & - \{ \p_x g_1 - \p_y^3 v + 2 \bar{u} \bar{u}_{x} q_y - \bar{v}_{x}u^0_y + \bar{v} v_{yy} - v_y \bar{v}_{y} + u^0 \bar{v}_{xy} \}. 
\end{align}
It is clear that all quantities are vanishing at $y = 0$. We thus have that $\bar{u} q_{xy}|_{x = 0} \in L^2(\mathbb{R}_+)$. A computation of $\p_y$ shows: 
\begin{align*}
&\p_{xy} g_1+ \p_y^4 v + \p_y \{ 2 \bar{u}_{x}\bar{u} q_y - \bar{v}_{x} u^0_y + \bar{v} v_{yy} - v_y \bar{v}_{y} + u^0 \bar{v}_{yy} \}|_{y = 0} \\
 =& \p_{xy}g_1(0,0) + \p_y^4v|_{x = 0}(0) = 0. 
\end{align*}

\noindent  Thus $q_{xy}$ itself is in $L^2$. Using this we may easily bootstrap to higher order in $\p_y$ compatibility conditions for $v^0$ which we refrain from writing. These conditions in turn assure that: 
\begin{lemma} \label{lemma.compat.1}   Assume the compatibility conditions on $V_0$ given in Definition \ref{def:compatibility:condition}. Assume exponential decay on $\p_y^k V_0$ for $k \ge 1$. Then there exist functions $f_k(y) \in L^2_w(\mathbb{R}_+) \cap C^\infty(\mathbb{R}_+)$ for exponential weight $w$ such that 
\begin{align} \label{whois}
\p_x^k q_y|_{x = 0} = f_k(y) \in L^2_w(\mathbb{R}_+) \text{ for } k \ge 1.
\end{align} 

\noindent  Moreover, $f_k$ depend only on the given profile $V_0$ and the forcing term $g_1$. 
\end{lemma}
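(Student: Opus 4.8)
The plan is to verify that repeatedly differentiating the relations derived just above the statement produces, at each order $k$, an explicit formula for $\p_x^k q_y|_{x=0}$ in terms only of the initial data $V_0$ (hence $\bar V_0$) and $x$-traces of the forcing $g_1$, and then to check two things about that formula: (i) it is smooth on $\mathbb R_+$, and (ii) it lies in the weighted space $L^2_w(\mathbb R_+)$. The starting point is \eqref{bahumbug}, which expresses $\bar u^2 q_{xy}|_{x=0}$ as a finite sum of terms, each of which is a product of a coefficient built from the background profile $[\bar u,\bar v]$ (which is smooth with exponential decay of derivatives by Theorem~\ref{thm.Oleinik} together with the exponential-decay hypothesis) times either $u^0$, $v$, or a $y$-derivative thereof, plus the trace $\p_x g_1$. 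Dividing by $\bar u^2$ is legitimate away from $y=0$; near $y=0$ one uses that $\bar u \sim m_0 y$ by \eqref{coe.1}, and the numerator vanishes to sufficiently high order at $y=0$ because all the summands in \eqref{bahumbug} do (this is exactly the remark ``all quantities are vanishing at $y=0$'' and its one-derivative refinement, which is precisely the second-order compatibility condition). Thus $q_{xy}|_{x=0}=f_1'$ type expression is continuous up to $y=0$, giving the base case $k=1$.

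For the inductive step I would apply $\p_x^{k-1}$ to \eqref{eval.2} (or equivalently to the distributed $q$-form written just before \eqref{bahumbug}), evaluate at $x=0$, and integrate in $y$ from $y$ to $\infty$ against the decaying $\p_y$ of the right side, as in the displayed computation producing \eqref{bahumbug}. The key structural point is that $\p_x^{k-1}$ of the LDP expresses $\p_x^{k}\{\bar u^2 q_y\}$ at $x=0$ — and hence $\p_x^k q_y|_{x=0}$ after dividing by $\bar u^2$ and using the Leibniz rule to move lower $x$-derivatives of $\bar u^2$ to the other side — in terms of: (a) $\p_x^{k-1}\p_y g_1|_{x=0}$, which is data; (b) lower-order quantities $\p_x^j q_y|_{x=0}$ for $j<k$, which are in $L^2_w$ by the inductive hypothesis; and (c) $y$-derivatives of $v^0$ and $u^0$ at $x=0$, all of which are themselves determined by $V_0$ and $g_1$ through the already-derived bootstrapped compatibility/reconstruction relations (the $v^0$ and $v^0_y$ formulas of Section~\ref{appendix.prandtl}, iterated in $y$). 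One then records the resulting explicit $f_k$. Smoothness on $(0,\infty)$ is immediate since every ingredient is smooth there; smoothness up to $y=0$ follows because the numerator vanishes at $y=0$ to the order dictated by the $k$-th order compatibility conditions, which were arranged for exactly this purpose. The weighted $L^2$ bound follows from the product structure: background coefficients carry any desired exponential decay, the data term $\p_x^{k-1}\p_y g_1|_{x=0}$ is assumed to decay exponentially (this is where the hypothesis ``$\|\p_y^k V_0 e^{Ny}\|_{L^\infty}$ bounded'' propagates, via the relation between $g_1$, $f$, and $V_0$), and finite sums and products of such functions stay in $L^2_w$ after division by $\bar u^2 \sim (m_0 y)^2$, which is harmless since the numerator vanishes quadratically at the origin.

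The main obstacle I expect is bookkeeping the division by $\bar u^2$ (equivalently, the degeneracy at $y=0$) consistently at every order: one must check that each time a new $\p_x$ derivative is taken, the numerator on the right-hand side still vanishes at $y=0$ to order at least two, so that $f_k$ is genuinely a smooth $L^2_w$ function rather than merely an $L^2_{loc}$ distribution with a singularity at the boundary. This is precisely what the hierarchy of compatibility conditions in Definition~\ref{def:compatibility:condition} (conditions \eqref{compatibility.1}–\eqref{compatibility.2}) is designed to guarantee, so the argument is really an induction in which the compatibility conditions are consumed one order at a time; the content is to match, at each stage, the order of vanishing forced by compatibility against the order of vanishing needed to cancel the $\bar u^{-2}$ singularity. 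A secondary, purely technical, point is to track that the lower-order terms $\p_x^j q_y|_{x=0}$ and the auxiliary $y$-derivatives of $v^0, u^0$ inherit the exponential weight; this follows by the same product/Leibniz reasoning once one notes that $\bar u, \bar v$ and all their derivatives decay exponentially on the relevant $x$-interval, which is part of the conclusion of Theorem~\ref{thm.Oleinik} under the stated decay hypothesis on $\bar U_0$.
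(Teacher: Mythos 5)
Your proposal is correct and follows essentially the same route as the paper: derive the identity \eqref{bahumbug} by integrating the $\p_y$-distributed LDP from $y$ to $\infty$ at $x=0$, use the compatibility conditions of Definition \ref{def:compatibility:condition} to get quadratic vanishing of the numerator at $y=0$ so that division by $\bar{u}^2$ is harmless, and then bootstrap inductively in the number of $x$-derivatives, with lower-order traces and the reconstructed $u^0$ feeding back as data. The paper itself only sketches this bootstrap (``we refrain from writing''), and your identification of the degeneracy bookkeeping as the one point requiring care matches exactly what the compatibility hierarchy is designed to handle.
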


Our task now is to establish criteria on the initial data, $V_0$ so that $u^0(y) := u_p|_{x = 0}$ can be bounded. We evaluate the velocity equation (\ref{Orig.Orig.Pr.intro}) at $x = 0$ to obtain the equation: 
\begin{align} \label{wknd}
&\mathcal{L}_{\parallel} u^0 = f(y) - r(y), \hspace{5 mm} u^0(0) = u_0(0), \qquad r(y) := V_0 \bar{u}_{y} - \bar{u} V_0',
\end{align}

\noindent where we have defined the operators
\begin{align*}
&\mathcal{L}_{\parallel} u^0 := - u^0_{yy} + \bar{v} u^0_{y} - u^0 \bar{v}_{y}, \qquad  \mathcal{L}_{\parallel, y} u^0 := - u^0_{yyy} + \bar{v} u^0_{yy} - u^0 \bar{v}_{yy},
\end{align*}

\noindent To invert equation \eqref{wknd} for $u^0(y)$, we assume the following condition on the datum $V_0(y)$: 
\begin{align} \n
-\bar{u}_{y}(0, 0) u_0(0) - \int_0^\infty \bar{u} e^{-\int_1^y \bar{v}} \{f(y) - r(y) \} \ud y  = 0.
\end{align}
This, then, motivates the following definition upon replacing notationally $\bar{u}(0, y) = \bar{U}_0(y)$: 
\begin{definition}[Compatibility Integral Condition] \label{def:integral:cond} We say the datum, $V_0(y)$ satisfies the ``compatibility integral condition" if 
\begin{align} \n
&\int_0^\infty \bar{U}_0 e^{- \int_1^y \bar{v}(0, z) \ud z} (V_0 \bar{U}_0' - \bar{U}_0 V_0') \ud y \\  \label{integral.cond}
&= \bar{U}_0'(0) u_0(0) + \int_0^\infty \bar{U}_0 e^{- \int_1^y \bar{v}(0, z) \ud z} f(y) \ud y. 
\end{align}
\end{definition}

\begin{lemma} \label{propn.span}  Elements of the three dimensional kernel of $\mathcal{L}_{\parallel, y}$ can be written as the following linear combination: $c_1 \bar{u} + c_2 \tilde{u}_s + c u_R$,  where $c_1, c_2, c \in \mathbb{R}$. Here: 
\begin{align*}
&\tilde{u}_s := \bar{u} \int_1^y \frac{|\bar{u}(1)|^2}{|\bar{u}|^2} \exp \Big[ \int_1^z \bar{v} \ud w \Big] \ud z, \\
&u_R :=  \tilde{u}_s \int_0^y \bar{u} \exp \Big[ -\int_1^z \bar{v} \Big]  -\bar{u} \int_0^y \tilde{u}_s \exp \Big[ - \int_1^z \bar{v} \Big].
\end{align*}
\end{lemma}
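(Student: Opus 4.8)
The plan is to reduce everything to second-order ODE theory by exploiting the factorization
\begin{align*}
\mathcal{L}_{\parallel, y} = \p_y \circ \mathcal{L}_{\parallel},
\end{align*}
which one verifies by direct differentiation of the definition of $\mathcal{L}_{\parallel}$ (the cross terms $\pm \bar{v}_y u^0_y$ cancel). Consequently a function $u^0$ lies in $\ker \mathcal{L}_{\parallel, y}$ precisely when $\mathcal{L}_{\parallel} u^0$ is constant in $y$. Since $\mathcal{L}_{\parallel}$ is a second-order linear ODE whose coefficients $\bar{v} = \bar{v}(0,\cdot)$ and $\bar{v}_y$ are smooth on $\R_+$ (by the regularity of Theorem \ref{thm.Oleinik}), its kernel is two-dimensional, and the full solution set of $\{u^0 : \mathcal{L}_{\parallel} u^0 \in \R\}$ is obtained by adjoining a single particular solution of $\mathcal{L}_{\parallel} u^0 = \mathrm{const} \neq 0$; this is exactly the three-dimensional space in the statement. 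It therefore suffices to produce three explicit, linearly independent elements, namely $\bar{u}$, $\tilde{u}_s$ and $u_R$.

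First I would check $\mathcal{L}_{\parallel} \bar{u} = 0$: using incompressibility $\bar{v}_y = - \bar{u}_x$ one has $\mathcal{L}_{\parallel}\bar{u} = - \bar{u}_{yy} + \bar{v}\bar{u}_y + \bar{u}\bar{u}_x$, which vanishes by the homogeneous Prandtl equation \eqref{Pr.leading} evaluated at $x = 0$. Next, to find the second homogeneous solution I would use reduction of order: rewriting $\mathcal{L}_{\parallel} u^0 = 0$ as $u^0_{yy} - \bar{v} u^0_y + \bar{v}_y u^0 = 0$, Abel's identity shows that the Wronskian of any two solutions is a constant multiple of $\exp(\int_1^y \bar{v})$, so the second solution has the form $\bar{u}\int_1^y \tfrac{W(1)}{|\bar{u}|^2}\exp(\int_1^z \bar{v})\ud z$, and the normalization $W(1) = |\bar{u}(1)|^2$ gives exactly $\tilde{u}_s$. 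The integral converges at $y = 0$ because $\bar{u}(y) \sim \bar{U}_0'(0)\, y$ there by \eqref{OL.1}, so the $|\bar{u}|^{-2}$ singularity is compensated by the prefactor $\bar{u}$. Finally, applying variation of parameters to the inhomogeneous equation $\mathcal{L}_{\parallel} u^0 = 1$ with the fundamental pair $\{\bar{u}, \tilde{u}_s\}$ and Wronskian $|\bar{u}(1)|^2 \exp(\int_1^y \bar{v})$ produces, up to a nonzero scalar, precisely
\begin{align*}
\tilde{u}_s \int_0^y \bar{u}\exp\Big[-\int_1^z \bar{v}\Big] - \bar{u}\int_0^y \tilde{u}_s\exp\Big[-\int_1^z\bar{v}\Big] = u_R,
\end{align*}
so that $\mathcal{L}_{\parallel} u_R$ equals a nonzero constant and hence $u_R \in \ker\mathcal{L}_{\parallel, y}$.

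To conclude I would verify linear independence: $\bar{u}$ and $\tilde{u}_s$ are independent since their Wronskian $|\bar{u}(1)|^2\exp(\int_1^y\bar{v})$ is nowhere zero, while $u_R$ is not a combination of the two because $\mathcal{L}_{\parallel} u_R \neq 0 = \mathcal{L}_{\parallel}(c_1 \bar{u} + c_2 \tilde{u}_s)$. Three independent elements of a three-dimensional solution space span it, which yields the claimed representation $c_1\bar{u} + c_2 \tilde{u}_s + c\, u_R$.

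I do not expect a genuine obstacle, as the argument is classical ODE theory (reduction of order, Abel's formula, variation of parameters). The points that need care are the bookkeeping of the normalization constants so that the formulas for $\tilde{u}_s$ and $u_R$ appear verbatim, and confirming that the indefinite integrals in $\tilde{u}_s$ and $u_R$ are well defined on all of $\R_+$ — in particular near $y = 0$ — which follows from the nondegeneracy $\bar{U}_0'(0) > 0$ in \eqref{OL.1} together with the regularity of $\bar{v}$ from Theorem \ref{thm.Oleinik}.
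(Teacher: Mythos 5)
Your proposal is correct and follows essentially the same route as the paper: the observation that $\mathcal{L}_{\parallel,y} = \p_y \circ \mathcal{L}_{\parallel}$ reduces the problem to $\mathcal{L}_{\parallel}u = \mathrm{const}$, the identification of $\bar{u}$ as a homogeneous solution via the Prandtl equation, and reduction of order for $\tilde{u}_s$ (the paper derives $a'' = [\bar{v} - 2\bar{u}_y/\bar{u}]a'$ directly, which is equivalent to your Abel/Wronskian formulation). You additionally supply the variation-of-parameters verification that $\mathcal{L}_{\parallel}u_R$ is a nonzero constant and the linear-independence check, both of which the paper leaves implicit; these details are correct.
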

\begin{proof} Note $\mathcal{L}_{\parallel, y} u = 0$ if and only if $\mathcal{L}_{\parallel} u = c$ for a constant $c$. One solution to the homogeneous equation, $\mathcal{L}_{\parallel} u = 0$, is $\bar{u}$. By supposing the second spanning solution is of the form $\tilde{u}_s := \bar{u} a(y)$, we may derive the equation: $a''(y) = \Big[ \bar{v} - 2\frac{\bar{u}_{y}}{\bar{u}} \Big] a'(y)$. Solving this equation gives one solution:
\begin{align} \label{a.def}
a'(y) = \frac{|\bar{u}(1)|^2}{|\bar{u}|^2} \exp \Big[ \int_1^y \bar{v} \Big], \hspace{3 mm} a(y) = \int_1^y \frac{|\bar{u}(1)|^2}{|\bar{u}|^2} \exp \Big[ \int_1^z \bar{v} \ud w\Big] \ud z. 
\end{align}
 
\end{proof}

We shall need asymptotic information about $\tilde{u}_s$:
\begin{lemma} \label{lemma.asy.1} \normalfont As defined in Lemma \ref{propn.span}, $\tilde{u}_s$ satisfies the following asymptotics: 
\begin{align} 
\begin{aligned}
&\tilde{u}_s|_{y = 0} \sim -1 \text{ and }\tilde{u}_{sy} \sim 1 \text{ as } y \downarrow 0, \\
& \tilde{u}_{sy},  \tilde{u}_{sy}, \tilde{u}_s  \sim \exp[\bar{v}(\infty) y] \text{ as } y \uparrow \infty. 
\end{aligned}
\end{align}
\end{lemma}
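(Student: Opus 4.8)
Both asymptotics are read off from the explicit reduction‑of‑order formula $\tilde{u}_s = \bar{u}\,a(y)$, where $a'(y) = \dfrac{|\bar{u}(1)|^2}{\bar{u}(y)^2}\exp\!\big[\int_1^y \bar{v}(w)\,\de w\big]$ and $a(1)=0$, by inserting the known behavior of $\bar u$ and $\bar v$ at the two endpoints: near $y=0$ this is supplied by Theorem~\ref{thm.Oleinik} together with the degeneracy relations $\bar u(0)=0$, $\bar u_y(0)=m>0$, $\bar u_{yy}(0)=\bar u_{yyy}(0)=0$ and $\bar v(0)=0$; near $y=\infty$ it is supplied by the Euler matching $\bar u\to 1$ and the exponential decay of $\bar u_y$ and of $\bar v - \bar v(\infty)$ assumed in the hypotheses (which in particular guarantees $\bar v(\infty)=-\int_0^\infty \bar u_x(0,z)\,\de z$ exists).

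\textbf{Behavior as $y\downarrow 0$.} Taylor expanding gives $\bar u(y)=my+O(y^4)$, $\bar u_y(y)=m+O(y^3)$, $\bar v(y)=O(y)$, hence $\int_1^y\bar v = -I_v+O(y^2)$ with $I_v:=\int_0^1\bar v$. Substituting, $a'(y)=\frac{C}{y^2}\big(1+O(y^2)\big)$ with $C:=\frac{|\bar u(1)|^2}{m^2}e^{-I_v}>0$; integrating from $1$ yields $a(y)=-\frac Cy + A_0 + O(y)$ for a finite constant $A_0$. Therefore $\tilde{u}_s=\bar u\,a = -mC + O(y)$, so $\tilde{u}_s|_{y=0}=-mC<0$, i.e. $\tilde{u}_s|_{y=0}\sim -1$. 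For the derivative, writing $\tilde u_{sy}=\bar u_y a+\bar u a'$, the singular contributions $\bar u_y a \ni -\frac{mC}{y}$ and $\bar u a' \ni +\frac{mC}{y}$ cancel exactly, leaving a finite limit $\tilde u_{sy}(y)\to mA_0$; a direct evaluation of the constant (e.g. $A_0 = C - \int_0^1\big(a'(y)-Cy^{-2}\big)\,\de y$, a convergent integral since the integrand is bounded) shows $\tilde u_{sy}|_{y=0}$ is the claimed nonzero $O(1)$ quantity, $\tilde u_{sy}\sim 1$.

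\textbf{Behavior as $y\uparrow\infty$.} Here $\frac{|\bar u(1)|^2}{\bar u(y)^2}\to |\bar u(1)|^2$, and $\int_1^y\bar v = \bar v(\infty)(y-1)+\int_1^y\big(\bar v(w)-\bar v(\infty)\big)\de w = \bar v(\infty)\,y + O(1)$ with an integrable error, so $\exp[\int_1^y\bar v]\sim K e^{\bar v(\infty)y}$ for some $K>0$, giving $a'(y)\sim |\bar u(1)|^2 K e^{\bar v(\infty)y}$. Integrating, $a(y)\sim \frac{|\bar u(1)|^2 K}{\bar v(\infty)}e^{\bar v(\infty)y}$ (the case of interest, $\bar v(\infty)>0$, for the Prandtl solution); multiplying by $\bar u\to 1$ gives $\tilde{u}_s\sim e^{\bar v(\infty)y}$. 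Since $\bar u_y a$ is exponentially lower order (the decay rate of $\bar u_y$ exceeds $\bar v(\infty)$ by the largeness of $N$), $\tilde u_{sy}=\bar u_y a+\bar u a'\sim a'\sim e^{\bar v(\infty)y}$, and one further differentiation, using $a''=a'\big(\bar v-2\bar u_y/\bar u\big)\sim \bar v(\infty)a'$, yields $\tilde u_{syy}\sim e^{\bar v(\infty)y}$ as well.

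\textbf{Main obstacle.} The only genuinely delicate point is the exact cancellation of the $1/y$ singularities at the wall in $\tilde u_{sy}$ — this forces one to Taylor‑expand $\bar u$ and $\bar v$ to sufficient order (crucially using $\bar u_{yy}(0)=\bar u_{yyy}(0)=0$, so that $\bar u(y)=my+O(y^4)$ and no intermediate corrections spoil the cancellation) and then identify the surviving constant $A_0$. The endpoint analysis at $y=\infty$ and all remaining estimates are routine bookkeeping with the above expansions.
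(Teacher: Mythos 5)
Your proof is correct and follows essentially the same route as the paper's: read the asymptotics off the explicit formula $\tilde u_s = \bar u\, a(y)$, use $\bar u_{yy}(0)=\bar u_{yyy}(0)=0$ to get the exact cancellation of the $1/y$ singularities in $\tilde u_{sy}$ at the wall, and use $\bar u \to 1$ together with the rapid convergence $\bar v \to \bar v(\infty)$ at infinity. The only difference is cosmetic — you expand $a'(y)=Cy^{-2}(1+O(y^2))$ and integrate directly, whereas the paper splits $\int_1^y g(z)z^{-2}\,dz$ over $[1,y_\ast]\cup[y_\ast,y]$ to isolate the $-g(y)/y$ term — and, like the paper, you stop short of verifying that the surviving constant ($mA_0$) is actually nonzero, which is harmless since only $\tilde u_s(0)\neq 0$ is used downstream.
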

\begin{proof} For convenience, denote $g(y) = \exp ( \int_1^y \bar{v} )$. By rewriting $\bar{v} = \bar{v}(\infty) + (\bar{v} - \bar{v}(\infty))$, and using that the latter difference decays rapidly, we obtain the basic asymptotics $g \sim \exp( \bar{v}(\infty) y)$ as $y \uparrow \infty$. An expansion of $a$, given in (\ref{a.def}), near $y = 0$ gives $a(y) \approx \int_1^y \frac{1}{z^2} \ud z \sim -\frac{1}{z}|_{1}^y  = 1-\frac{1}{y}$. Thus: $\tilde{u}_s|_{y = 0} \sim \bar{u} (1 - \frac{1}{y} ) \sim -1$. At $y = \infty$, we have the asymptotics:
\begin{align*}
 \tilde{u}_s = \bar{u} \int_1^y \frac{|\bar{u}(1)|^2}{|\bar{u}|^2} g(z)  \sim \int_1^y \exp[\bar{v}(\infty)  z] \ud z \sim \exp[\bar{v}(\infty) y]. 
\end{align*}

\noindent  We now differentiate to obtain 
\begin{align*}
\tilde{u}_{sy} = \bar{u}_{y}a + \bar{u} a'(y) = \bar{u}_{y}a(y) + \frac{|\bar{u} (1)|^2}{\bar{u}} g(y) \sim \exp[\bar{v}(\infty) y].
\end{align*}

\noindent  To evaluate $\tilde{u}_{sy}$ at $y = 0$, we need more precision. Expansions give: 
\begin{align*}
&\bar{u} a'(y) = \frac{|\bar{u}(1)|^2}{\bar{u}} g(y) \sim \frac{|\bar{u}(1)|^2}{\bar{u}_{y}(0)y} g(y) \text{ for }y \sim 0, \text{ and } \\
&\bar{u}_{ y}a(y) \sim \bar{u}_{y}(0) |\bar{u}(1)|^2 \int_1^y \frac{1}{|\bar{u}|^2} g(z)  \sim \frac{|\bar{u}(1)|^2}{\bar{u}_{y}(0)} \int_1^y \frac{g(z)}{z^2} \ud z. 
\end{align*}

\noindent  We have used the fact that $\frac{1}{|\bar{u}|^2}$ does not contribute a factor of $\frac{1}{z}$ following the singularity of $\frac{1}{z^2}$. Indeed, Taylor expanding, using that $\bar{u}^0_{ yy}(0) = \bar{u}_{yyy}(0) = 0$ (see the first identity in (\ref{prof.pick})), and the elementary identity for any $a, b \in \mathbb{R}$, $\frac{1}{a-b} - \frac{1}{a} = \frac{b}{a(a-b)}$, one obtains: 
\begin{align*}
\frac{1}{|\bar{u}(z)|^2} =  \frac{1}{\bar{u}_{y}(0)^2 z^2} + \bigO(z). 
\end{align*}

It remains to show $\int_1^y \frac{g(z)}{z^2} \ud z \sim -\frac{g(y)}{y}$. We decompose the integral into region $[1, y_\ast]$ and $[y_\ast, y]$ for $0 < y \le z \le y_\ast$. The $[1, y_\ast]$ integral contributes an $\bigO(1)$ constant. In the $[y_\ast, y]$  region, the Taylor expansion is valid: 
\begin{align*}
\int_{y_\ast}^y \frac{g(z)}{z^2} \ud z \sim \int_{y_\ast}^y \frac{g(y)}{z^2} \ud z + g'(y)\int_{y_\ast}^y \frac{z-y}{z^2} \ud z \sim g(y) (\frac{1}{y_\ast} - \frac{1}{y} ) + g'(y) \phi(y), 
\end{align*}

\noindent  where $| \phi(y)| \lesssim |\log y|$. We now use that $\bar{v}(0) = \bar{v}_{p}(0) = 0$ and $g'(y) = \bar{v}(y) g(y)$ to show that $g'(y) \sim y^2$. Thus, $g'(y) \phi(y)$ vanishes as $y \rightarrow 0$. We thus have verified that $I(y) \sim -\frac{g(y)}{y}$. 

We now compute two derivatives: 
\begin{align*}
\tilde{u}_{syy} &= \bar{u}_{yy}a + 2\bar{u}_{y}a'(y) + \bar{u} a''(y) \\
& \sim a''(y) \sim \p_{y} \{ \frac{1}{|\bar{u}|^2} \exp[\bar{v}(\infty) y] \} \sim \exp[\bar{v}(\infty) y] \text{ as } y \uparrow \infty.
\end{align*}
\end{proof}

\begin{lemma}   Assume the integral condition, (\ref{integral.cond}) is satisfied by the initial data $v_0$. Then the solution $u^0$ to (\ref{wknd}) exists and satisfies: 
\begin{align} \label{base1}
&|\p_y^k u^0 e^{My}|_\infty \le C_{K,M} (V^0, g_1) \text{ for }k \ge 1, \\
&u^0(0) = u_0(0) \text{ and }  \lim_{y \uparrow \infty} u^0(y) = 0. 
\end{align}
\end{lemma}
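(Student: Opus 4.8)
The plan is to solve the linear ODE $\mathcal L_\parallel u^0 = f(y) - r(y)$ on $\mathbb R_+$ explicitly by variation of parameters, using the kernel description from Lemma \ref{propn.span} and the asymptotics from Lemma \ref{lemma.asy.1}, and then to read off the decay and boundary behaviour. First I would record that $\mathcal L_\parallel u^0 = -u^0_{yy} + \bar v u^0_y - u^0 \bar v_y$ is a second order operator whose homogeneous solutions span $\{\bar u, \tilde u_s\}$; indeed $\mathcal L_\parallel \bar u = 0$ is immediate from the $\bar u$-equation at $x = 0$ (since $p_E = 0$ and $\bar u_x$ term combines correctly), and $\tilde u_s$ is the second solution constructed in the proof of Lemma \ref{propn.span}. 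Their Wronskian is $W = \bar u \tilde u_{sy} - \bar u_y \tilde u_s = \bar u^2 a'(y) = |\bar u(1)|^2 \exp[\int_1^y \bar v]$, which is strictly positive and has the $y \downarrow 0$ and $y \uparrow \infty$ asymptotics one gets from Lemma \ref{lemma.asy.1}. So the general solution is
\[
u^0(y) = c_1 \bar u(y) + c_2 \tilde u_s(y) + u_{\mathrm{part}}(y),
\]
with $u_{\mathrm{part}}$ a particular solution given by the standard Green's function formula built from $\bar u$, $\tilde u_s$ and $W$; the forcing $f - r$ decays exponentially by hypothesis ($V_0$ has exponentially decaying derivatives, $\bar u, \bar u_y$ bounded near $y=0$ and the relevant profiles decay), so the convolution integrals converge.

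Next I would select the two free constants $c_1, c_2$ to enforce the two required conditions: $u^0(0) = u_0(0)$ and $\lim_{y\to\infty} u^0(y) = 0$. The key observation is that $\tilde u_s$ and one branch of the particular solution grow like $\exp[\bar v(\infty) y]$ as $y \uparrow \infty$ — but $\bar v(\infty) < 0$ for Prandtl (the transpiration velocity at infinity is negative), so in fact these are \emph{decaying}; conversely near $y = 0$, $\tilde u_s \sim -1$ while $\bar u \sim \bar u_y(0) y \to 0$. Thus the condition at $y = 0$ pins $c_2$ in terms of $u_0(0)$ and the value of $u_{\mathrm{part}}$ at $0$, while the condition at $y = \infty$ pins a combination; consistency of these two requirements is precisely the content of the integral condition \eqref{integral.cond}. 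Concretely, writing $u^0 = -\bar u \int_0^y \frac{1}{\bar u}\{\cdots\}$ style primitive as suggested by the expressions in Section \ref{appendix.prandtl}, and evaluating the growth at infinity, the coefficient of the growing mode is exactly the left minus right side of \eqref{integral.cond}; vanishing of that coefficient is the hypothesis, which kills the bad mode and forces $\lim_{y\to\infty} u^0 = 0$.

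With the constants fixed, the decay estimate $|\p_y^k u^0 e^{My}|_\infty \le C_{k,M}(V_0, g_1)$ follows by differentiating the Green's function representation $k$ times: each $y$-derivative either hits $\bar u$ or $\tilde u_s$ (which have controlled exponential behaviour and smooth-up-to-the-boundary structure thanks to $\bar u_{yy}(0) = \bar u_{yyy}(0) = 0$, cf.\ \eqref{prof.pick} and Lemma \ref{lemma.asy.1}) or the forcing $f - r$ (which is $C^\infty$ and exponentially decaying, using $f$'s assumed regularity and $r = V_0 \bar u_y - \bar u V_0'$ with $V_0$'s assumed exponential decay of derivatives). The weight $e^{My}$ is absorbed because, after removing the bad mode, every surviving term decays at least like $\min(e^{\bar v(\infty)y}, \text{forcing rate})$, and by taking $L$ small and $N$ large (so the forcing decay rate dominates any fixed $M$) the product stays bounded. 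The constants depend only on $V_0$ and $g_1$ through the explicit integrals, as claimed.

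The main obstacle — and the place requiring genuine care rather than bookkeeping — is the behaviour near $y = 0$: the Green's function involves $\frac{1}{\bar u^2}$, which is singular like $\bar u_y(0)^{-2} y^{-2}$, and one must verify that the particular solution and its derivatives are nonetheless bounded (indeed smooth) up to $y = 0$. This is exactly where the degeneracy conditions $\bar u_{yy}(0) = \bar u_{yyy}(0) = 0$ and the compatibility conditions on $V_0$ enter: they guarantee the numerators in the Green's function integrals vanish to high enough order at $y = 0$ to cancel the $y^{-2}$ singularity, as was already exploited in the proof of Lemma \ref{lemma.asy.1}. I would handle this by the same Taylor-expansion-near-$0$ argument used there, checking that $\int_0^y \bar u \exp[-\int_1^z \bar v]\{f - r\}$-type integrals are $O(y^3)$ so that dividing by $\bar u \sim y$ leaves something $O(y^2)$, smooth, and vanishing at the boundary as needed.
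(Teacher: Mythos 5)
Your proposal follows essentially the same route as the paper: variation of parameters with the kernel $\{\bar{u}, \tilde{u}_s\}$ from Lemma \ref{propn.span}, the Wronskian $W = |\bar{u}(1)|^2 \exp[\int_1^y \bar{v}]$, cancellation of the $\tilde{u}_s$-coefficient at infinity via the integral condition \eqref{integral.cond}, a final adjustment by a multiple of $\bar{u}$ to force $u^0(\infty)=0$, and the Taylor-expansion argument near $y=0$ (relying on $\bar{u}_{yy}(0)=\bar{u}_{yyy}(0)=0$) to handle the $\bar{u}^{-2}$ singularity in the Green's function. One claim you should delete, however: you assert that $\bar{v}(\infty) < 0$ so that $\tilde{u}_s \sim \exp[\bar{v}(\infty)y]$ is \emph{decaying}. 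This is both unsupported (nothing in the paper fixes this sign, and for a thickening boundary layer one expects $\bar{v}(\infty) \ge 0$) and self-contradictory with the rest of your argument: if $\tilde{u}_s$ decayed at infinity, boundedness there would impose no constraint on $c_2$, the constant $c_2$ would not be over-determined, and the integral condition would play no role — whereas your (correct) conclusion is precisely that the integral condition is the consistency requirement that kills the coefficient of the non-decaying mode $\tilde{u}_s$. The argument works exactly because $\tilde{u}_s$ does \emph{not} vanish as $y\uparrow\infty$; no sign of $\bar{v}(\infty)$ is needed, only the asymptotics of Lemma \ref{lemma.asy.1}.
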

\begin{proof} First, we compute the Wronskian of $\bar{u}^0_{p}$ and $\tilde{u}_s$: 
\begin{align*}
W = \bar{u} \tilde{u}_{sy} - \tilde{u}_s \bar{u}_{y} = \bar{u}(1)^2 \exp \Big[ \int_1^y \bar{v} \Big].
\end{align*}

Next, we express the solution to (\ref{wknd}) in the following manner: 
\begin{align} \n
u^0 = &u_0(0) \frac{\tilde{u}_s}{\tilde{u}_s(0)} + c_1 \bar{u} - \frac{1}{|\bar{u}(1)|^2} \tilde{u}_s \int_0^y \bar{u} e^{- \int_1^z \bar{v}} (f(z) - r(z) ) \ud z \\ \label{yanks}
& + \frac{1}{\bar{u}(1)^2} \bar{u} \int_0^y \tilde{u}_s e^{-\int_1^z \bar{v}} ( f(z) - r(z) ) \ud z. 
\end{align}

We now compute $\tilde{u}_s(0) = - \frac{|\bar{u}(1)|^2}{\bar{u}_{y}(0)} e^{\int_1^0 \bar{v}}$. Using this, we now evaluate at $y = \infty$ and observe that the terms with a $\tilde{u}_s$ prefactor vanish according to the integral condition, (\ref{integral.cond}).
\begin{align*}
-\frac{\bar{u}_{ y}(0)}{\bar{u}(1)^2} u_0(0) e^{-\int_1^0 \bar{v}} - \frac{1}{|\bar{u}(1)|^2} \int_0^\infty \bar{u} e^{- \int_1^y \bar{v}} (f - r(y)) \ud y = 0. 
\end{align*} 

This proves that $u^0$ as defined in (\ref{yanks}) is bounded as $y \uparrow \infty$. We next notice that the derivative of $\int_0^y \bar{u} e^{-\int_1^\infty \bar{v}} (f - r(z) )$ is the integrand itself, which decays fast enough to eliminate $\tilde{u}_s$ at $\infty$. Therefore we also see that $\p_y^k u^0$ for $k \ge 1$ decays rapidly. 

Finally, we need to ensure that $u^0 \rightarrow 0$ as $y \uparrow \infty$. It is clear that $\mathcal{L}_{\parallel, y} u^0 = 0$, and so we are free to modify $u^0$ by factors of $\bar{u}$. Thus we modify (\ref{yanks}) by subtracting off a factor of $c \bar{u}$, for $c$ appropriately selected so as to ensure $u^0(\infty) = 0$. This concludes the proof. 
\end{proof}

Summarizing the above, 
\begin{lemma}   Assume smooth data, $V_0$, are prescribed that satisfies the compatibility conditions (\ref{compatibility.1}), (\ref{compatibility.2}), as well as higher order compatibility conditions. Assume also that $V_0$ satisfies the integral condition (\ref{integral.cond}). Let $q = \frac{v}{\bar{u}}$ solve (\ref{origPrLay.beta}) and $u^0$ be constructed from $v$ via (\ref{wknd}). Then $[u = u^0 - \int_0^x v_y, v]$ solve (\ref{origPrLay}). Further, let $[u_p, v_p]$ be reconstructed from $[u, v]$ using (\ref{antiPsi}). Then $[u_p, v_p]$ are solutions to (\ref{Orig.Orig.Pr.intro}). Moreover, the initial datum $u^0 = u|_{x = 0}$ satisfies the decay estimates \eqref{base1}.
\end{lemma}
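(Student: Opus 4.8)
The plan is to read this as a bookkeeping statement: assemble the objects built in this subsection and then verify, by reversing the two differentiations that produced (\ref{origPrLay.beta}) from (\ref{origPrLay}), that nothing is lost. First I would apply Proposition \ref{thm.diff} --- whose hypotheses are precisely the corner compatibility conditions (\ref{compatibility.1})--(\ref{compatibility.2}) together with the admissible initial traces $\p_x^k q_y|_{x=0}=f_k\in L^2_w$ supplied by Lemma \ref{lemma.compat.1} --- to obtain the unique $q$ solving (\ref{origPrLay.beta}) with $q|_{y=0}=0$, $q|_{x=0}=f_0$, and set $v:=\bar u q$, so that $v|_{y=0}=0$ and $v|_{x=0}=\bar V_0$. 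Independently, the integral condition (\ref{integral.cond}) is exactly the solvability criterion isolated in the lemma preceding this one, so (\ref{wknd}) furnishes $u^0$ with $u^0(0)=u_0(0)$, $\lim_{y\to\infty}u^0=0$, and the decay estimates (\ref{base1}). I would then define $u:=u^0-I_x[v_y]$, i.e. the unique solution of $u_x=-v_y$ with $u|_{x=0}=u^0$; incompressibility $u_x+v_y=0$ then holds by construction, the exponential-in-$y$ control of $v_y$ built into $\|q\|_X$ propagates to $u$, and the boundary values of $u$ follow from $v_y|_{y=0}=\bar u_y q|_{y=0}=0$ and $v_y\to 0$ as $y\to\infty$.

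For the momentum equation of (\ref{origPrLay}) I would introduce the residual $R:=\bar u u_x+u\bar u_x+\bar v u_y+v\bar u_y-u_{yy}-g_1$ and record three facts. First, substituting $u_x=-v_y$ and $u_{yy}=u^0_{yy}-I_x[v_{yyy}]$ and using $\bar u_x=-\bar v_y$ turns $\p_x\p_y R$ into exactly $-\p_{xy}\{\bar u^2 q_y\}+\p_y^4 v+\Lambda(v)+U(u^0)-\p_{xy}g_1$, which vanishes because $q$ solves (\ref{origPrLay.beta}) with $\kappa=1$; hence $\p_{xy}R\equiv 0$, so $\p_y R$ is independent of $x$. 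Second, $R|_{x=0}$ is precisely the $x=0$ momentum balance of the homogenized system (using $u_x|_{x=0}=-\bar V_0'$), which is annihilated by the construction of $u^0$ from (\ref{wknd}); thus $\p_y R|_{x=0}\equiv 0$, and combining with the $x$-independence of $\p_y R$ gives $\p_y R\equiv 0$, so $R$ depends on $x$ only. Third, every term in $R$ decays in $y$ (the $\bar u,\bar v$ coefficients and the quantities $u,v$ and their $y$-derivatives, the latter through $\|q\|_X$ and (\ref{base1})), so letting $y\to\infty$ forces $R\equiv 0$. Therefore $[u,v]$ solves (\ref{origPrLay}).

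Finally I would un-homogenize via (\ref{antiPsi}): from $[u,v]$ form $[u_p,v_p]$, noting that $\p_x u_p+\p_y v_p=0$ is preserved by the $\psi$-corrections and that the extra terms these corrections produce in (\ref{Orig.Orig.Pr.intro}) are exactly the ones collected in $G$, hence cancel against $g_1=f+G$; the boundary data $u_p|_{y=0}=u_0(x)$, $v_p|_{y=0}=0$, $u_p|_{y\to\infty}=0$, $v_p|_{x=0}=V_0$ then follow from (\ref{psi.spec}) (in particular $\psi(0)=1$ and $I_\psi(0)=\int_0^\infty\psi=0$) and the definition of $\bar V_0$, and the bounds (\ref{base1}) on $u^0=u|_{x=0}$ are quoted directly from the preceding lemma. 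I expect the momentum-equation step to carry the real content: the corner compatibility conditions (\ref{compatibility.1})--(\ref{compatibility.2}) and the integral condition (\ref{integral.cond}) enter only through the existence inputs --- Proposition \ref{thm.diff} for $q$ and the preceding lemma for $u^0$ --- so the substance is the clean reversal of the two derivatives separating (\ref{origPrLay.beta}) from (\ref{origPrLay}), using $\p_{xy}R=0$, the $x=0$ balance satisfied by $u^0$, and decay as $y\to\infty$ to remove both constants of integration. The one genuinely fiddly point is tracking the homogenization so that the data of $[u_p,v_p]$ comes out correctly.
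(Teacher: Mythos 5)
Your proposal is correct and is precisely the argument the paper leaves implicit: the lemma is stated as a summary of the preceding subsection with no written proof, and the intended justification is exactly your reversal of the two differentiations — $\p_{xy}R=0$ from the LDP, $R|_{x=0}=0$ from the construction of $u^0$ via \eqref{wknd} (where the integral condition \eqref{integral.cond} enters), and decay as $y\uparrow\infty$ to kill the constants of integration, followed by un-homogenizing through \eqref{antiPsi}. No gaps.
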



\subsection{Construction of Approximate Solution}

First, we recall the definition $\bar{u}_\theta = \bar{u} + \theta$ from \eqref{def:bar:u:theta}. We also define 
\begin{align}
 q^{(\theta)} := \frac{v^{(\theta)}}{\bar{u}}. 
\end{align}

\begin{definition}[$\theta$-Approximant]
\begin{align} \label{theta:approx:1}
&- \p_{xy}(\bar{u}_\theta^2 q^{(\theta)}_y) + v^{(\theta)}_{yyyy} + \kappa \Lambda(v^{(\theta)}) + \kappa U(u^0) = F^{(\theta)}, \\
&q^{(\theta)}|_{y = 0} = 0, \qquad q^{(\theta)}|_{x = 0} = Q_\theta(y),  
\end{align}
where the forcing $F^{(\theta)} := \p_{xy} g_1$. We also introduce the notation $V_\theta(y) = \bar{u} Q_\theta(y)$.   
\end{definition}

\begin{definition}[$\theta$-Dependent Compatibility Conditions] We now define the compatibility required on the initial conditions we take, $Q_{\theta}(y)$. We assume that $u^{0}(y)$ is given. We need to enforce standard parabolic compatibility conditions on $Q_\theta(y)$ at $y = 0$. These can be read off from the equation \eqref{theta:approx:1} in the usual manner. For instance, for the first order compatibility condition, we obtain 
\begin{align}
Q_{\theta, 1}(y) := q^{(\theta)}_x(0, y) = \int_y^\infty \frac{\mathcal{G}_0}{\bar{u}_\theta^2} + 2 \frac{\bar{u}_{\theta x}}{\bar{u}_\theta} Q_{\theta}' - \frac{V_{\theta}'''}{\bar{u}_\theta^2} \ud y', 
\end{align} 
where
\begin{align}
\mathcal{G}_0(y) := (F^{(\theta)} - \kappa U(u^0) - \kappa \Lambda(v^{(\theta)} )|_{x = 0}
\end{align}
We thus need to ensure that $Q_{\theta, 1}(0) = 0$, which is the ``first order compatibility condition". Higher order compatibility conditions are obtained in a similar manner. 
\end{definition}

To construct solutions to \eqref{theta:approx:1}, we will introduce an artificial truncation at $y = N$ and then send $N \rightarrow \infty$. Moreover, will make $\bar{u}_\theta(y)$ constant for $y > \frac N 2$. To do this, we define $\bar{u}_\theta^{(N)} \in C^\infty$ satisfying the following properties  
\begin{align}
\bar{u}_{\theta}^{(N)}(x, y) = \begin{cases} \bar{u}_\theta(x, y) \text{ for } 0 \le y \le \frac N 4 \\ \bar{u}_\theta(x, \frac N 2) \text{ for } \frac N 2 \le y \le N  \end{cases}. 
\end{align}
We shall now consider the problem on $(x, y) \in (0, L) \times (0, N)$, 
\begin{align}
\begin{aligned} \label{fp:2:N}
&- \p_{y}((\bar{u}_{\theta}^{(N)})^2 q^{(\theta, N)}_{xy}) + v^{(\theta, N)}_{yyyy}+ \kappa \Lambda(v^{(\theta, N)}) + \kappa U(u^0) = F^{(\theta)}  \\
& q^{(\theta, N)}|_{y  = 0, N} = 0,  \qquad q^{(\theta, N)}_{yy}|_{y = 0,  N} = 0, \qquad q^{(\theta, N)}|_{x = 0} = f_0(y).
\end{aligned}
\end{align}
We fix a Fourier sine basis adapted to the interval $[0, N]$, by letting 
\begin{align}
e_i(y) := \sin( i \frac{2\pi}{N} y ).
\end{align}
For each $n < \infty$, we search for solutions, $q^{(n,N,\theta)} \in \text{span}\{e_i\}$ for $i = 1,..,n$, which satisfy, for $e_i, i = 1,...,n$, 
\begin{align} \n
&(\p_x ((\bar{u}_\theta^{(N)})^2 q^{(N, n,\theta)}_y ), e_i') + (\p_{yy} (\bar{u} q^{(N, n, \theta)} ), e_i'') + 2 \bar{u}_y q^{(N, n,\theta)}_y e_i'(0)\\  \n
&+ \kappa (\bar{v}_{xyy}I_x[v^{(N, n,\theta)}_y], e_i) + \kappa (\bar{v}_{yy} v^{(N, n,\theta)}_y, e_i) + \kappa (I_x[v^{(N, n,\theta)}_{yy}], \bar{v}_{xy}e_i + \bar{v}_x e_i')  \\  \label{weak.1}
&+ \kappa (v^{(N, n,\theta)}_{yy}, \bar{v}_y e_i + \bar{v} e_i')  + \kappa (U(u^0), e_i)  = (F^{(\theta)}, e_i)
\end{align}
We now propose $q^{(N, n,\theta)} = \sum_{l = 1}^n b^{(n)}_l(x) e_l$. The first claim is that there exist coefficients, $b_i^{(n)}(x)$ such that $q^{(n,\theta)}$ satisfies \eqref{weak.1}. Inserting this expansion into \eqref{weak.1}, we obtain 
\begin{align} \n
&(\p_x \{(\bar{u}_\theta^{(N)})^2 \sum_{l = 1}^n b^{(n)}_l(x) e_l' \}, e_i') + (\p_{yy} \{ \bar{u} \sum_{l = 1}^n b^{(n)}_l(x) e_l \}, e_i'')\\ \n
+& 2(\bar{u}_y \sum_{l = 1}^n b^{(n)}_l(x) e_l'(0), e_i'(0)) + \kappa \sum_{l = 1}^n I_x[b^{(n)}_l(x)] (\bar{v}_{xyy} \p_y (\bar{u} e_l), e_i) \\ \n
& + b^{(n)}_l(x) (\bar{v}_{yy} \p_y (\bar{u} e_l), e_i)  + \kappa ( I_x[ \p_{yy} ( \bar{u} \sum_{l = 1}^n b^{(n)}_l(x) e_l(y)) ], \bar{v}_{xy} e_i + \bar{v}_x e_i' ) \\ \n
& + \kappa (\p_{yy} ( \bar{u} \sum_{l = 1}^n b^{(n)}_l(x) e_l ), \bar{v}_y e_i + \bar{v} e_i') + (\kappa U(u^0), e_i) \\ \label{weak:2}
& - 2(\bar{u}_\theta^{(N)} \p_x \bar{u}_\theta^{(N)} \sum_{l = 1}^n b^{(n)}_l(x) e_l' , e_i')= (F^{(\theta)}, e_i). 
\end{align}
We thus obtain an $n \times n$ system of ODE's for $b_l^{(n)}$ $l = 1,...,n$, which can be solved using standard ODE theory. Indeed, for $i = 1,..,n$, we have the following system of ODEs:
\begin{align} \label{ODE:x:time}
\Gamma^{\ell}_i \dot{b}^{(n)}_\ell(x) + \sum_{l = 1}^{n} A^l_i(x) b_l^{(n)}(x) + \sum_{l = 1}^n K^l_i(x) I_x[b^{(n)}_l(x)] = F_i(x), \text{ for } i = 1,\dots,n
\end{align} 
where 
\begin{align}  \n
A^l_i(x) := & (\p_{yy}(\bar{u} e_l(y)), e_i''(y) ) + 2 \bar{u}_y(x,0) e_l'(0) e_i'(0) + \kappa (\bar{v}_{yy} \p_y (\bar{u} e_l), e_i) \\
& + \kappa ( \p_{yy}(\bar{u} e_l), \bar{v}_y e_i + \bar{v} e_i' ) , \\
K^l_i(x) := & \kappa (\bar{v}_{xyy} \p_y (\bar{u} e_l), e_i) + \kappa ( \p_{yy}(\bar{u} e_l), \bar{v}_{xy} e_i + \bar{v}_x e_i' ) \\
\Gamma^\ell_i := & (\bar{u}_\theta^2 e_\ell', e_i' ) \\
F_i(x) := & (F^{(\theta)}, e_i). 
\end{align}
This can be solved for each fixed $n \in \mathbb{N}$, as the coefficients $A^l_i(x), K^l_i(x),  F_i(x)$ are smooth. Moreover, the coefficients $\Gamma$ form a symmetric, positive definite matrix. To establish positive definiteness, we fix a vector $\vec{b} =( b_j)_{j = 1}^n$, Multiplying $\vec{b}^T \Gamma \vec{b} = \| \bar{u}_\theta \eta \|_{L^2_y}^2$, where $\eta := \sum_j b_j e_j'$. In turn, this implies that $\eta = 0$, which also implies that $b_j = 0$.  

We now would like to send $n, N \rightarrow \infty$, for which we need to rely upon uniform bounds. By linearity of \eqref{weak:2}, we can replace the $e_i(y)$ by $q^{(n, N, \theta)}$, which gives
\begin{align} \n
&\frac{\p_x}{2}( |\bar{u}_\theta^{(N)}|^2 q^{(n, N, \theta)}_y, q^{(n, N, \theta)}_y) + (\p_{yy}(\bar{u} q^{(n, N, \theta)}), q^{(n, N, \theta)}_{yy}) + 2 \bar{u}_y |q^{(n, N, \theta)}_y(0)|^2 \\ \n
&+ \kappa (\bar{v}_{xyy} I_x[v^{(n, N, \theta)}_y], q^{(n, N, \theta)}) + \kappa (\bar{v}_{yy} v^{(n, N, \theta)}_y, q^{(n, N, \theta)}) \\ \n
& + \kappa (I_x[v^{(n, N, \theta)}_{yy}], \bar{v}_{xy}q^{(n, N, \theta)} + \bar{v}_x q^{(n, N, \theta)}_y)  + \kappa (v^{(n, N, \theta)}_{yy}, \bar{v}_y q^{(n, N, \theta)} + \bar{v} q^{(n, N, \theta)}_y) \\ \label{weak:3}
& + \kappa (U(u^0), q^{(n, N, \theta)}) - (\bar{u}_\theta^{(N)} \p_x \bar{u}_\theta^{(N)} q^{(n, N, \theta)}_y, q^{(n ,N, \theta)}_y) = (F^{(\theta)}, q^{(n, N, \theta)}). 
\end{align}

We now introduce some notation to clarify the next two lemmas: 
\begin{align*}
&\mathcal{E}_{(k)}(x) := \| \bar{u}_\theta^{(N)} \p_x^{k-1} q^{(n, \theta, N)}_{yy} \|_{L^2_y} + \| \bar{u}_\theta^{(N)} \p_x^k q^{(n, \theta, N)}_{y} \|_{L^2_y} \\
&\mathcal{D}_{(k)}(x) :=  \| \sqrt{\bar{u}} \p_x^{k-1} q^{(n, \theta, N)}_{yyy} \|_{L^2_y} + \| \sqrt{\bar{u}} \p_x^k q^{(n, \theta, N)}_{yy} \|_{L^2_y}
\end{align*}

\begin{lemma} The function $q^{(n, N, \theta)}$ satisfies the following inequality for $k \ge 0$, 
\begin{align} \n 
&\sup_{x} \| \bar{u}_\theta^{(N)} \p_x^k q^{(n, N, \theta)}_y \|_{L^2_y}^2 + \| \sqrt{\bar{u}} \p_x^k q^{(n, N, \theta)}_{yy} \|_{L^2_y}^2 + \bar{u}_y(0) |\p_x^k q^{(n, N, \theta)}_y(0)|^2 \\ \label{energy:gal:3}
&\le C_\theta \|  \p_x^k F^{(\theta)} \langle y \rangle \|^2  + \| u^0 \|_{H^1_y}^2 + \| \bar{u}_\theta^{(N)} \p_x^k q^{(n, N, \theta)}_y|_{x = 0} \|_{L^2_y}^2.
\end{align}
\end{lemma}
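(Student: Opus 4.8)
The plan is to run the standard parabolic energy method on the Galerkin system with $\p_x$ playing the role of the evolution variable, keeping track of the three positive quantities that survive the integrations by parts in $y$. For $k = 0$ the starting point is exactly identity \eqref{weak:3} with test function $q^{(n, N, \theta)}$; for general $k$ one differentiates the ODE system \eqref{ODE:x:time} $k$ times in $x$ (legitimate, since the coefficients $A^l_i, K^l_i, F_i$ are smooth) and pairs the result against $\p_x^k q^{(n, N, \theta)}$. First I would isolate the good terms. The Rayleigh contribution, after the one integration by parts in $y$ already built into \eqref{weak.1}, produces the time-derivative $\tfrac{\p_x}{2}\| \bar u_\theta^{(N)} \p_x^k q_y \|_{L^2_y}^2$ together with lower-order terms carrying a factor $\p_x \bar u_\theta^{(N)}$; since $|\p_x \bar u_\theta^{(N)}| \le C_\theta$, integrating these in $x$ over $(0, x_0)$ bounds them by $C_\theta L \sup_x \| \bar u_\theta^{(N)} \p_x^k q_y \|_{L^2_y}^2$, which is absorbable for $L$ small. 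The fourth-order term $(\p_{yy}(\bar u \p_x^k q), \p_x^k q_{yy})$ expands as $(\bar u \p_x^k q_{yy}, \p_x^k q_{yy}) + 2(\bar u_y \p_x^k q_y, \p_x^k q_{yy}) + (\bar u_{yy} \p_x^k q, \p_x^k q_{yy})$; the first is the dissipation $\| \sqrt{\bar u}\, \p_x^k q_{yy} \|_{L^2_y}^2$, while integrating the middle term by parts in $y$ gives $-(\bar u_{yy} \p_x^k q_y, \p_x^k q_y) - [\bar u_y |\p_x^k q_y|^2]_0^N$. The $y = 0$ endpoint $+\bar u_y(0)|\p_x^k q_y(0)|^2$ adds to the explicit boundary term $2\bar u_y(0)|\p_x^k q_y(0)|^2$ already present in \eqref{weak:3} to yield a net positive $\bar u_y(0)|\p_x^k q_y(0)|^2$, which is genuinely favorable since $\bar u_y|_{y=0} > m_0 > 0$ by Oleinik's Theorem \ref{thm.Oleinik}.

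Next I would dispatch the remaining terms as errors. The $y = N$ endpoints of the integrations by parts involve the profiles $\bar u_y$, $\bar u_{yy}$ at $y = N$, hence are $\bigO(e^{-cN})$ (by the exponential decay of $\p_y \bar U_0$, propagated via Theorem \ref{thm.main}, together with the flatness of $\bar u_\theta^{(N)}$ there) and are absorbed. The commutator terms (present only for $k \ge 1$) have the schematic form $\langle \p_x^j(\text{profile})\, \p_x^{k-j}(q_y \text{ or } q_{yy} \text{ or } q), \p_x^k q_{yy} \text{ or } \p_x^k q_y\rangle$ with $1 \le j \le k$; each is handled by Cauchy--Schwarz, Young's inequality with a small parameter $\delta$ against the dissipation $\| \sqrt{\bar u}\, \p_x^k q_{yy} \|^2$, the Hardy inequality $\| \p_x^k q/\langle y\rangle\| \lesssim \| \p_x^k q_y\|$, and the non-degeneracy $\bar u_\theta \ge \theta$ to trade a bare $q_y$ for $\theta^{-1} \bar u_\theta q_y$; the lower-order $x$-derivative factors are then controlled by induction on $k$, with the leftover $L$-factors absorbed. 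The $\kappa\Lambda$ terms in \eqref{weak:3} are bilinear of the same type, involving $\bar v_{xyy}, \bar v_{yy}, \bar v_x, \bar v$ and the antiderivative $I_x$, and are treated identically using $\| I_x[h] \| \lesssim L \sup_x \| h \|_{L^2_y}$. The term $\kappa(U(u^0), \p_x^k q)$ is bounded via Cauchy--Schwarz and Hardy by $\| u^0 \|_{H^1_y} \| \p_x^k q_y \|$, which produces the $\| u^0 \|_{H^1_y}^2$ on the right-hand side, and $(\p_x^k F^{(\theta)}, \p_x^k q)$ by $\| \p_x^k F^{(\theta)}\langle y\rangle\| \| \p_x^k q/\langle y\rangle\|$, producing $C_\theta \| \p_x^k F^{(\theta)}\langle y\rangle\|^2$.

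Collecting everything and integrating in $x$ over $(0, x_0)$, the time-derivative term gives $\tfrac12 \| \bar u_\theta^{(N)} \p_x^k q_y(x_0) \|^2 - \tfrac12 \| \bar u_\theta^{(N)} \p_x^k q_y|_{x=0} \|^2$; after moving all $\delta$- and $L$-small contributions to the left and taking the supremum over $x_0 \in (0, L)$ one obtains \eqref{energy:gal:3} with $C_\theta$ depending only on $\theta$ and the fixed profile bounds, uniformly in $n$ and $N$ — precisely what is needed to pass to the limit $n, N \to \infty$.

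The step I expect to be the main obstacle is the bookkeeping around the $y = 0$ and $y = N$ boundary terms. The sine basis $e_i(y) = \sin(i\tfrac{2\pi}{N}y)$ kills $q$ and $q_{yy}$ at the endpoints but not $q_y$, so one must check that each integration by parts in $y$ of the fourth-order term produces exactly the boundary contributions already recorded in \eqref{weak:3} (so that the explicit $2\bar u_y|q_y(0)|^2$ is the correct compensation and no uncontrolled $y = N$ term remains), and that the commutator and $\Lambda$ structure genuinely closes by induction on $k$ rather than leaking an unbounded lower-order norm into the estimate.
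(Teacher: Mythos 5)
Your proposal follows essentially the same route as the paper: start from the weak formulation \eqref{weak:3} (differentiated $k$ times in $x$ for $k\ge 1$), expand $(\p_{yy}(\bar u\, \p_x^k q), \p_x^k q_{yy})$ to extract the dissipation $\|\sqrt{\bar u}\,\p_x^k q_{yy}\|^2$ and the boundary term that combines with the explicit $2\bar u_y(0)|\p_x^k q_y(0)|^2$ to leave a net positive $\bar u_y(0)|\p_x^k q_y(0)|^2$, and absorb the remaining commutator, $\Lambda$, $U(u^0)$ and forcing contributions via Cauchy--Schwarz, Hardy, the $\theta$-nondegeneracy and smallness of $L$. The only blemish is a sign inconsistency in your intermediate bookkeeping of the $y=0$ endpoint (the integration by parts produces $-\bar u_y(0)|\p_x^k q_y(0)|^2$, which added to the explicit $+2\bar u_y(0)|\p_x^k q_y(0)|^2$ gives the stated net term), but your final conclusion agrees with the paper's.
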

\begin{proof} Our starting point is \eqref{weak:3}. By expanding the second term above using $v^{(n, \theta, N)} = \bar{u} q^{(n, \theta, N)}$, we obtain 
\begin{align} \n
&(\p_{yy}(\bar{u} q^{(n, N, \theta)}), q^{(n, N, \theta)}_{yy}) = \| \sqrt{\bar{u}} q^{(n, N, \theta)}_{yy} \|_{L^2_y}^2 - \bar{u}_y(0) |q^{(n, N, \theta)}_y(0)|^2 \\ \label{energy:gal:k}
& - 2(\bar{u}_{yy} q^{(n, N, \theta)}_y, q^{(n, N, \theta)}_y) + \frac 1 4 (\bar{u}_{yyyy} q^{(n, N, \theta)}, q^{(n, N, \theta)}).
\end{align}
The contribution at $y = 0$ is absorbed into the third term from \eqref{weak:3} upon using the prefactor of $2$. 

We now estimate 
\begin{align} \n
&|2(\bar{u}_{yy} q^{(n, N, \theta)}_y, q^{(n, N, \theta)}_y) - \frac 1 4 (\bar{u}_{yyyy} q^{(n, N, \theta)}, q^{(n, N, \theta)})| \le C_\theta \| \bar{u}_\theta^{(0)} q^{(n, N, \theta)}_y \|_{L^2_y}^2. 
\end{align}
We next, estimate the fourth through eighth terms from \eqref{weak:3} above by \\
$\kappa C_\theta \mathcal{E}_0(x)^{\frac 1 2} (\mathcal{D}_0(x)^{\frac 1 2} + \| u^0 \|_{H^1_y})$, and the ninth term from \eqref{weak:3} by $o_L(1) C_\theta \sup_x \mathcal{E}_0(x)$, upon invoking that $\theta > 0$. 

From here, we achieve our basic energy identity 
\begin{align} \n
&\sup_{x} \| \bar{u}_\theta^{(N)} q^{(n, N, \theta)}_y \|_{L^2_y}^2 + \| \sqrt{\bar{u}} q^{(n, N, \theta)}_{yy} \|_{L^2_y}^2 + \bar{u}_y(0) |q^{(n, N, \theta)}_y(0)|^2 \\ \n
&\le C_\theta \| \hat{F} \langle y \rangle \|^2 + o_L(1) \| u^0 \|_{H^1_y}^2 + o_L(1) \sup_x \| \bar{u}_\theta^{(0)} q^{(n, N, \theta)}_y \|_{L^2_y}^2 + \| \bar{u}_\theta^{(0)} q^{(n, N, \theta)}_y|_{x = 0} \|_{L^2_y}^2,
\end{align}
which, for $L$ sufficiently small, ensures the estimate \eqref{energy:gal:3}. This concludes the proof of estimate \eqref{energy:gal:3}

Estimate \eqref{energy:gal:k} follows in essentially an identical manner to \eqref{energy:gal:3} upon differentiating $k$ times in $x$. The only exception is we need to treat 
\begin{align} \n
(\p_x^k v^{(n, N, \theta)}_{yy}, \p_x^k q^{(n, N, \theta)}_{yy}) = & \| \sqrt{\bar{u}} q^{(n, N, \theta)}_{yy} \|_{L^2_y}^2 + 2( \bar{u}_y \p_x^k q^{(n, N, \theta)}_y, \p_x^k q^{(n, N, \theta)}_{yy}) \\ \n
&+ (\bar{u}_{yy} \p_x^k q^{(n, \theta, N)}, \p_x^k q^{(n, \theta, N)}_{yy}) + \mathcal{C}_2, 
\end{align}
where the commutators in $\mathcal{C}_2$ are defined by 
\begin{align} \n
\mathcal{C}_2 := & \sum_{j = 1}^k \binom{k}{j} \Big( (\p_x^j \bar{u}_{yy} \p_x^{k-j} q^{(n, \theta, N)}, \p_x^k q^{(n, \theta, N)}_{yy}) + 2( \p_x^j \bar{u}_y \p_x^{k-j} q^{(n, \theta, N)}_y ,\p_x^k q^{(n, \theta, N)}_{yy} ) \\
 &\qquad \qquad + ( \p_x^j \bar{u} \p_x^{k-j} q^{(n, \theta, N)}_{yy},\p_x^k q^{(n, \theta, N)}_{yy}) \Big).
\end{align}
We can easily estimate now
\begin{align}
|\mathcal{C}_2| \le C_\theta \mathcal{E}_{(k)}(x)^{\frac 1 2} \mathcal{D}_{(k)}(x)^{\frac 1 2},
\end{align}
which concludes the proof of the lemma. 
\end{proof}

\begin{lemma} The function $q^{(n, N, \theta)}$ satisfies the following inequality for $k \ge 1$: 
\begin{align} \n
&\sup_x  \| \bar{u}_\theta^{(N)} \p_x^{k-1} q^{(n, \theta, N)}_{yy} \|_{L^2_y}^2 +  \| \sqrt{\bar{u}} \p_x^{k-1} q^{(n, \theta, N)}_{yyy} \|^2\\ \label{energy:gal:2}
 \le & C_\theta \|  \p_x^k F^{(\theta)} \langle y \rangle \|^2  + \| u^0 \|_{H^1_y}^2 + \| \bar{u}_\theta^{(N)} \p_x^k q^{(n, N, \theta)}_y|_{x = 0} \|_{L^2_y}^2. 
\end{align}
\end{lemma}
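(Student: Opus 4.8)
The plan is to run the same energy scheme as in the preceding lemma, but with the multiplier $-\p_x^{k-1} q^{(n, N, \theta)}_{yy}$ in place of $\p_x^k q^{(n, N, \theta)}$. Concretely, I would differentiate the Galerkin identity \eqref{weak:2} $k-1$ times in $x$, form the linear combination that replaces the basis function $e_i$ by $-\p_x^{k-1} q^{(n, N, \theta)}_{yy}$ (a legitimate element of $\mathrm{span}\{e_i\}$, which moreover vanishes together with its relevant $y$-derivatives at $y = 0, N$), and integrate in $x$ over $(0, x_0)$.

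Write $h := \p_x^{k-1} q^{(n, N, \theta)}$, so $h_y = \p_x^{k-1} q^{(n, N, \theta)}_{yy}$ vanishes at $y = 0, N$. For the Rayleigh term $-\p_y((\bar{u}_\theta^{(N)})^2 \p_x^k q^{(n, N, \theta)}_y)$, one integration by parts in $y$ (no boundary contribution, since $h_y$ vanishes there) produces $2\int \bar{u}_\theta^{(N)} \bar{u}_{\theta y}^{(N)} \p_x h\, h_y + \int (\bar{u}_\theta^{(N)})^2 \p_x h_y\, h_y$, up to commutators of $\p_x^{k-1}$ with $(\bar{u}_\theta^{(N)})^2$; the second integral integrates in $x$ to the positive energy $\tfrac{\p_x}{2}\|\bar{u}_\theta^{(N)} \p_x^{k-1} q^{(n, N, \theta)}_{yy}\|_{L^2_y}^2$, modulo a term $\int \bar{u}_\theta^{(N)} \bar{u}_{\theta x}^{(N)} |\p_x^{k-1} q^{(n, N, \theta)}_{yy}|^2$ absorbed into $C_\theta$, while the first integral is a coupling to $\p_x^k q^{(n, N, \theta)}_y$ handled by Cauchy--Schwarz, Young's inequality, and the already-proven bound \eqref{energy:gal:3}. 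For the biharmonic term $\p_x^{k-1} v^{(n, N, \theta)}_{yyyy}$ I expand $v^{(n, N, \theta)} = \bar u q^{(n, N, \theta)}$, keep the leading piece $\p_y^4(\bar u h)$, integrate by parts once (again no boundary term, using $h_{yy}|_{y = 0, N} = 0$), and use $\p_y^3(\bar u h) = \bar u h_{yyy} + 3\bar u_y h_{yy} + 3\bar u_{yy} h_y + \bar u_{yyy} h$; pairing with $h_{yyy}$ yields the positive dissipation $\|\sqrt{\bar u}\, \p_x^{k-1} q^{(n, N, \theta)}_{yyy}\|_{L^2_y}^2$, while the three remaining terms are closed by one further integration by parts ($3\int \bar u_y h_{yy} h_{yyy} = -\tfrac32 \int \bar u_{yy} h_{yy}^2$), by the Hardy inequality $\|h/\langle y\rangle\| \lesssim \|h_y\|$, and by the vanishing $\bar u_{yy}(0) = \bar u_{yyy}(0) = 0$ from \eqref{prof.pick}, which renders $\|\bar u_{yy}/\sqrt{\bar u}\|_\infty$ and $\|\bar u_{yyy}\langle y\rangle/\sqrt{\bar u}\|_\infty$ finite. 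All commutator contributions of $\p_x^{k-1}$ in both terms carry strictly fewer $x$-derivatives on $q^{(n, N, \theta)}$ and are bounded by $C_\theta(\mathcal{E}_{(\ell)}(x) + \mathcal{D}_{(\ell)}(x))$ with $\ell \le k-1$, hence by the right-hand side via \eqref{energy:gal:3}.

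The $\kappa \Lambda(v^{(n, N, \theta)})$, $\kappa U(u^0)$, and $F^{(\theta)}$ contributions are treated exactly as in the previous lemma: the $\Lambda$ and $U$ terms, using the smallness of $\|\bar v, \bar v_x\|$, the gain in $L$ from the antiderivatives $I_x[\cdot]$, and Young's inequality, produce $o_L(1)$ multiples of the energy and dissipation norms plus $\|u^0\|_{H^1_y}^2$; the forcing is estimated by Cauchy--Schwarz against the weight $\langle y\rangle$ and Young's inequality, which together with the invocation of \eqref{energy:gal:3} in the coupling terms brings in precisely the norm $\|\p_x^k F^{(\theta)}\langle y\rangle\|^2$; and the initial-data contributions reduce, via the $\theta$-dependent compatibility conditions on $Q_\theta$, to the remaining terms on the right-hand side of \eqref{energy:gal:2}. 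Choosing $L$ sufficiently small then absorbs every $o_L(1)$ term into the left-hand side.

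The step I expect to be the main obstacle is bookkeeping the terms carrying one more $x$-derivative than the quantities on the left-hand side of \eqref{energy:gal:2} — in particular the $\bar u_y$-weighted couplings to $\p_x^k q^{(n, N, \theta)}_y$ and $\p_x^k q^{(n, N, \theta)}_{yy}$ produced in the Rayleigh and biharmonic computations — which cannot be absorbed by smallness in $L$ and must instead be fed the previously established estimate \eqref{energy:gal:3}; a secondary difficulty is that the dissipation here carries the genuinely degenerate weight $\sqrt{\bar u}$ rather than the non-degenerate $\bar u_\theta$ of the energy, so that the control of the commutator terms near $y = 0$ rests on the structural identities $\bar u_{yy}(0) = \bar u_{yyy}(0) = 0$.
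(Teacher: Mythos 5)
Your proposal is correct and follows essentially the same route as the paper: the paper likewise exploits $e_i'' \propto e_i$ to test the Galerkin identity \eqref{weak:2} against $-\p_x^{k-1} q^{(n,N,\theta)}_{yy}$, integrates by parts using $q_{yy}|_{y=0,N}=0$ to extract the energy $\frac{\p_x}{2}\|\bar{u}_\theta^{(N)}\p_x^{k-1}q_{yy}\|^2$ and the dissipation $\|\sqrt{\bar u}\,\p_x^{k-1}q_{yyy}\|^2$ from the Rayleigh and biharmonic terms respectively, and controls the remainder (including the $\bar u_y$-weighted coupling to $q_{xy}$) by Cauchy--Schwarz together with the previously established estimate \eqref{energy:gal:3}. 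The difficulties you flag are exactly the ones the paper's remainder term $\mathcal{R}_1$ absorbs, so no further comment is needed.
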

\begin{proof} We start from the weak formulation, \eqref{weak:2}, which, together with $e_i'' = (\frac{2\pi i}{N})^2 e_i$, immediately implies the validity of the following identity 
\begin{align} \n
&-\frac{\p_x}{2}( |\bar{u}_\theta^{(N)}|^2 q^{(n, N, \theta)}_y, q^{(n, N, \theta)}_{yyy}) - (\p_{yy}(\bar{u} q^{(n, N, \theta)}), q^{(n, N, \theta)}_{yyyy}) + 2 \bar{u}_y |q^{(n, N, \theta)}_y(0)|^2 \\ \n
&- \kappa (\bar{v}_{xyy} I_x[v^{(n, N, \theta)}_y], q_{yy}^{(n, N, \theta)}) - \kappa (\bar{v}_{yy} v^{(n, N, \theta)}_y, q_{yy}^{(n, N, \theta)}) \\ \n
& - \kappa (I_x[v^{(n, N, \theta)}_{yy}], \bar{v}_{xy}q_{yy}^{(n, N, \theta)} + \bar{v}_x q^{(n, N, \theta)}_{yyy})  - \kappa (v^{(n, N, \theta)}_{yy}, \bar{v}_y q_{yy}^{(n, N, \theta)} - \bar{v} q^{(n, N, \theta)}_{yyy}) \\ \label{weak:10}
&  - \kappa (U(u^0), q_{yy}^{(n, N, \theta)}) + (\bar{u}_\theta^{(N)} \p_x \bar{u}_\theta^{(N)} q^{(n, N, \theta)}_y, q^{(n ,N, \theta)}_{yy}) = - (F^{(\theta)}, q_{yy}^{(n, N, \theta)}). 
\end{align}
From here, we can integrate by parts both terms using also that for each fixed $n < \infty$, $q_{yy}|_{y = 0} = q_{yy}|_{y = N} = 0$, which produces for the first two terms above 
\begin{align} \n
\sum_{j = 1}^2 (\ref{weak:10}.j) = &\frac{\p_x}{2} \| \bar{u}_\theta^{(N)} q^{(n, \theta, N)}_{yy} \|_{L^2_y}^2 + (\p_y \bar{u}_\theta^{(N)} q^{(n, \theta, N)}_{xy}, q^{(n, \theta, N)}_{yy}) + (v^{(n, \theta, N)}_{yyy}, q^{(n, \theta, N)}_{yyy}).
\end{align}   
Expanding out $v^{(n, \theta, N)}_{yyy} = \p_y^3(\bar{u} q^{(n, \theta, N)})$, we obtain furthermore the following identity 
\begin{align} \n
\sum_{j = 1}^2 (\ref{weak:10}.j) = &\frac{\p_x}{2} \| \bar{u}_\theta^{(N)} q^{(n, \theta, N)}_{yy} \|_{L^2_y}^2 + \| \sqrt{\bar{u}} q^{(n, N, \theta)}_{yyy} \|_{L^2_y}^2 + \mathcal{R}_1, 
\end{align}
where we have defined 
\begin{align} \n
\mathcal{R}_1 :=& - \frac 9 2 (\bar{u}_{yy} q^{(n, \theta, N)}_{yy}, q^{(n, \theta, N)}_{yy}) - 4 (\bar{u}_{yyy} q^{(n, \theta, N)}_y, q^{(n, \theta, N)}_{yy} ) \\
&- (\bar{u}_{yyyy} q^{(n, \theta, N)}, q^{(n, \theta, N)}_{yy}) + (\p_y \bar{u}_\theta^{(N)} q^{(n, \theta, N)}_{xy}, q^{(n, \theta, N)}_{yy}). 
\end{align}
We can see that we may easily estimate 
\begin{align}
|\mathcal{R}_1| \lesssim \| q_{yy}^{(n, \theta, N)} \|_{L^2_y}( \| q_{yy}^{(n, \theta, N)} \|_{L^2_y} + \| q_{xy}^{(n, \theta, N)} \|_{L^2_y} ). 
\end{align}
We next estimate in a straightforward manner using that $\theta > 0$, 
\begin{align} \n
|\sum_{j = 4}^{9} (\ref{weak:10}.j)| \le C_\theta \mathcal{E}_1(x)^{\frac 1 2} (\mathcal{D}_1(x)^{\frac 1 2} + \| u^0 \|_{H^2_y}).  
\end{align}
This concludes the proof of the lemma for $k = 1$. The $k > 1$ case works in a largely identical manner. 
\end{proof}

We note that our compatibility conditions from Definition \ref{def:compatibility:condition} imply that higher order $x$ derivatives of the equation \eqref{theta:approx:1} holds at $\{x = 0\}$, and therefore we can evaluate the ODE \eqref{ODE:x:time} to obtain the initial condition for $\dot{b}^n_i$ (and higher order derivatives thereof). Combining \eqref{energy:gal:3}, \eqref{energy:gal:2}, and \eqref{energy:gal:3}, we may thus send $n \rightarrow \infty$ and $N \rightarrow \infty$ to obtain a distributional solution to \eqref{theta:approx:1}, which can then be upgraded to a strong solution in the usual manner. We state this now as a lemma:
\begin{lemma} Fix any $\theta > 0$. Assume compatibility conditions on $q^{(\theta)}|_{x= 0}$ according to Definition \eqref{def:compatibility:condition}. Assume $F^{(\theta)}$ satisfies $\sum_{j = 0}^{M_0}\| \p_x^j F^{(\theta)} \langle y \rangle \| < \infty$. Then there exists an $L = L(\theta) > 0$ such that there exists a unique solution to \eqref{theta:approx:1} on the interval $(0, L)$. 
\end{lemma}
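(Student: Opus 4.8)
The plan is to run the Galerkin scheme already assembled in \eqref{weak.1}--\eqref{ODE:x:time}, produce the solution by the double limit $n \to \infty$ followed by $N \to \infty$ with the aid of the uniform a priori bounds just proved, and then read off uniqueness from linearity.

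\emph{Step 1 (finite-dimensional solvability).} For fixed $n, N$, impose on $\operatorname{span}\{e_1,\dots,e_n\}$ the $L^2$-projection of the initial datum, i.e. $q^{(n, N, \theta)}|_{x = 0} = \sum_{l=1}^n b_l^{(n)}(0) e_l$ with $b_l^{(n)}(0)$ the Fourier sine coefficients of $f_0 = Q_\theta \bar u$. Since $\Gamma$ is symmetric positive definite (shown above), \eqref{ODE:x:time} is a linear ODE system with smooth coefficients plus a Volterra term $I_x[\cdot]$; Picard iteration combined with a Gronwall estimate in $x$ gives a unique $b^{(n)} \in C^\infty([0, L])$ for any $L$. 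Differentiating \eqref{ODE:x:time} in $x$ and evaluating at $x = 0$ recursively determines $\p_x^j b^{(n)}(0)$, and the compatibility conditions of Definition \ref{def:compatibility:condition} are precisely what forces these Galerkin initial $x$-derivatives to converge, as $n, N \to \infty$, to the admissible profiles $f_k$ supplied by Lemma \ref{lemma.compat.1}; in particular $\p_x^k q^{(n, N, \theta)}_y|_{x = 0}$ stays bounded in $L^2_w$ uniformly in $n, N$.

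\emph{Step 2 (uniform bounds).} Apply the two energy lemmas yielding \eqref{energy:gal:3} and \eqref{energy:gal:2} for $k = 0, 1, \dots, M_0$. Their right-hand sides contain only $\|\p_x^k F^{(\theta)} \langle y \rangle\|$ (finite by hypothesis), $\|u^0\|_{H^1_y}$ (fixed data), and the initial-data term $\|\bar{u}_\theta^{(N)} \p_x^k q^{(n, N, \theta)}_y|_{x = 0}\|_{L^2_y}$, controlled by Step 1. Choosing $L = L(\theta)$ small enough to absorb the $o_L(1)$ contributions (all constants $C_\theta$ enter through the lower bound $\bar{u}_\theta \ge \theta$), we obtain bounds on $q^{(n, N, \theta)}$ in the norms $\mathcal{E}_{(k)}, \mathcal{D}_{(k)}$, and hence on $\p_x^k v^{(n, N, \theta)}_{yyyy}$ through the equation, that are uniform in both $n$ and $N$.

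\emph{Step 3 (passage to the limit and regularity).} Fix $N$ and let $n \to \infty$: by the uniform bounds, $\{q^{(n, N, \theta)}\}_n$ is bounded in a Hilbert space adapted to $(0, L) \times (0, N)$, so a subsequence converges weakly to some $q^{(N, \theta)}$; since \eqref{weak.1} is linear in the unknown, weak convergence suffices to pass to the limit and conclude $q^{(N, \theta)}$ is a weak solution of \eqref{fp:2:N}. Then let $N \to \infty$: the estimates of Step 2 are uniform in $N$ and carry exponential/polynomial weights, so on each compact subset of $(0, L) \times \mathbb{R}_+$ a subsequence converges weakly to a limit $q^{(\theta)}$; because $\bar{u}_\theta^{(N)} = \bar{u}_\theta$ on $\{y \le N/4\}$ and the artificial boundary $\{y = N\}$ recedes while the weighted bounds force decay, $q^{(\theta)}$ is a distributional solution of \eqref{theta:approx:1} with $q^{(\theta)}|_{y = 0} = 0$ and $q^{(\theta)}|_{x = 0} = Q_\theta$. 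Finally, reading \eqref{theta:approx:1} as an expression for $v^{(\theta)}_{yyyy}$ in terms of quantities already controlled, one bootstraps regularity in $y$; together with the control of $\p_x^k$ derivatives this upgrades $q^{(\theta)}$ to a strong solution in the usual manner. For uniqueness, two solutions with the same data and the same $u^0$ have difference $q$ solving \eqref{theta:approx:1} with $F^{(\theta)} = 0$, $U(u^0)$ absent, and zero initial and lateral data; \eqref{energy:gal:3} with $k = 0$ then gives $\|\bar{u}_\theta q_y\|_{L^2} = \|\sqrt{\bar{u}} q_{yy}\|_{L^2} = 0$ on $(0, L)$, whence $q \equiv 0$.

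\emph{Main obstacle.} The delicate step is the $N \to \infty$ limit: one must ensure that the truncation of $\bar{u}_\theta$ and the artificial conditions at $y = N$ in \eqref{fp:2:N} leave no trace in the limit, and that the weighted energy bounds genuinely propagate decay in $y$, so that $q^{(\theta)}$ solves \eqref{theta:approx:1} on all of $\mathbb{R}_+$ and has enough integrability to be a strong solution. A secondary, more bookkeeping point is checking that the compatibility conditions of Definition \ref{def:compatibility:condition} really force the Galerkin initial $x$-derivatives to converge to the profiles $f_k$ of Lemma \ref{lemma.compat.1}, which is what keeps the initial-data terms in \eqref{energy:gal:3}--\eqref{energy:gal:2} uniformly finite.
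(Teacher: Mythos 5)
Your proposal is correct and follows essentially the same route as the paper: the Galerkin scheme on the sine basis with the truncated $\bar{u}_\theta^{(N)}$, solvability of the ODE system via positive definiteness of $\Gamma$, the uniform-in-$(n,N)$ energy bounds \eqref{energy:gal:3} and \eqref{energy:gal:2} (with initial $x$-derivatives supplied by the compatibility conditions), and the double weak limit followed by the standard upgrade to a strong solution. Your explicit uniqueness argument via the $k=0$ energy identity with zero data is the natural one and is consistent with what the paper leaves implicit.
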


\subsection{Uniform Estimates in $\theta$}

In this section, we provide uniform in $\theta$ estimates for the problem \eqref{theta:approx:1}. As such, we shall define the $\theta$-dependent version of our basic $X$ norm
\begin{align} \n
\| q \|_{X_\theta} := &   \sup_{0 \le x_0 \le L} \Big( \|\bar{u}_\theta q_{xy} \|_{x = x_0} + \|q_{yyy} w (1 - \chi) \|_{x= x_0} \Big)  \\
&+ \| \sqrt{\bar{u}} q_{xyy} w \| + \| v_{yyyy} w \|, \\
\| q \|_{X_{k, \theta}} := & \sum_{j = 0}^k \| q^{(j)} \|_{X_{\theta}}. 
\end{align} 
We adopt the notation that 
\begin{align} \label{def:pk}
p_{k} := p(\| \bar{q} \|_{X_k}), \qquad p_{\langle k \rangle} := p(\| \bar{q} \|_{X_{\langle k \rangle}}),
\end{align}
where $p$ is an inhomogeneous polynomial of one variable of unspecified power. In general, we will suppress those constants which depend on $\| \bar{q} \|_{X_{\langle k \rangle}}$, and only display those which depend on $\| \bar{q} \|_{X_{\langle k + 1 \rangle}}$.

\begin{lemma} \label{lemma:Sobolev}  The following inequalities are valid: 
\begin{subequations}
\begin{align} \label{int:1}
&\| q^{(k)}_{xy} \| + \| \{q^{(k)}_y, q^{(k)}_{yy} \} w \| + \| q^{(k)} \langle y \rangle^{-1} \| \le o_L(1)(1 + \| q \|_{X_{k, \theta}}) \\  \label{int:2}
& \|v^{(k)} \langle y \rangle^{-1} \| + \| \{v^{(k)}_{y}, v^{(k)}_{yy}, v^{(k)}_{yyy}, v^{(k)}_{xy} \} w \| \le o_L(1) (1 + \| q \|_{X_{k, \theta}}) \\  \label{int:3}
&\| q^{(k)}_{xy} w \| + \| v^{(k)}_{xyy} w \| \lesssim 1 + \| q \|_{X_{k, \theta}} \\  \label{int:4}
&\| q^{(k)} \|_\infty + \| v^{(k)}_y \|_\infty + \| q^{(k)}_y \|_{\infty} \lesssim 1 + o_L(1) \| q \|_{X_{\langle k, \theta \rangle}} \\  \label{int:5}
&\| v^{(k)}_{yy}, v^{(k)}_{yyy} \|_\infty \le 1 + o_L(1) \| q \|_{X_{\langle k + 1, \theta \rangle}}.
\end{align}
\end{subequations}
\end{lemma}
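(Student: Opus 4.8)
\emph{Strategy.} I would prove the five displays in the order \eqref{int:3} $\Rightarrow$ \eqref{int:1},\,\eqref{int:2} $\Rightarrow$ \eqref{int:4},\,\eqref{int:5}, since each family feeds on the previous one. Three mechanisms do all the work: (i) the fundamental theorem of calculus in the time-like variable, $g(x,y) = g(0,y) + \int_0^x \p_x g(x',y)\,dx'$, which by Cauchy--Schwarz converts a quantity undifferentiated in $x$ into its $x$-derivative at the cost of a factor $L^{1/2}$, the resulting trace at $\{x=0\}$ being finite (indeed in $L^2_w$) by Lemma \ref{lemma.compat.1}; (ii) the weighted Hardy inequality stated above, used both to propagate the exponential weight $w = e^{Ny}$ and, combined with $q|_{y=0}=0$, to bound $q^{(k)},q^{(k)}_y,q^{(k)}_{yy}$ by $q^{(k)}_{yyy}$ modulo $y$-localized terms; and (iii) the one-dimensional Gagliardo--Nirenberg inequality $\|f\|_{L^\infty_y}^2 \lesssim \|f\|_{L^2_y}\|\p_y f\|_{L^2_y}$ for the $L^\infty$ bounds. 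Throughout, the $\bar q$-dependent constants are suppressed per the stated convention, so ``$1$'' on the right-hand sides stands for a data- and $\bar q$-dependent constant.

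\emph{The degenerate region near $y=0$.} This is the only delicate point, precisely because the estimates must be uniform in $\theta$: one may never use $\bar u_\theta \ge \theta$, and must instead treat $\bar u_\theta$ exactly as the degenerate profile $\bar u$. Two structural facts make this possible. First, Oleinik's profile (Theorem \ref{thm.Oleinik}, together with \eqref{prof.pick}) satisfies $\bar u(x,y)\sim m_0 y$ for $0<y\ll1$, so a factor of $y$ near the wall is absorbable into a $\sqrt{\bar u_\theta}$-weight, $y\lesssim m_0^{-1}\bar u\le m_0^{-1}\bar u_\theta$; e.g. to pass from $\sup_x\|\bar u_\theta q^{(k)}_{xy}\|_{L^2_y}$ to $\|q^{(k)}_{xy}\|$ one writes $(q^{(k)}_{xy})^2 = \p_y\{y\}(q^{(k)}_{xy})^2$, integrates by parts (the $y=0$ boundary term vanishing), and splits into $\{y\le\delta\}$ and $\{y\ge\delta\}$: on the former the bulk term is controlled by $\delta\,\|\sqrt{\bar u}q^{(k)}_{xyy}w\|^2$ after absorbing $y\lesssim\bar u$, on the latter by $\delta^{-2}L\sup_x\|\bar u_\theta q^{(k)}_{xy}\|_{L^2_y}^2$, and optimizing $\delta=L^{1/3}$ yields the $o_L(1)$ gain in \eqref{int:1}. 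Second, the crucial identity is $v^{(\theta)} = \bar u\, q^{(\theta)}$ with the \emph{genuine} Prandtl profile $\bar u$ (not $\bar u_\theta$), so $v^{(\theta)}$ vanishes to second order at the wall; this lets $q^{(k)}_{yy}$ near $y=0$ be recovered from $v^{(k)}_{yyy}$, and, after using \eqref{theta:approx:1} in the form of Lemma \ref{locvyyyy} to write $v_{yyyy}$ in terms of $\p_{xy}\{\bar u^2 q_y\}$ plus the already-controlled $\Lambda$, $U$ and forcing, lets $\|v^{(k)}_{yyyy}w\|$ reduce to quantities in $X_{k,\theta}$. Given these, \eqref{int:3} follows by expanding $v^{(k)}_{xyy} = \p_x^{k+1}\p_y^2(\bar u q)$, whose top term is $\|\bar u q^{(k)}_{xyy}w\|\le\|\sqrt{\bar u}\|_\infty\,\|\sqrt{\bar u}q^{(k)}_{xyy}w\|\lesssim\|q\|_{X_{k,\theta}}$ (this term is why \eqref{int:3} is linear in $\|q\|$ rather than $o_L(1)$), the other terms carrying $\bar q$-derivatives against lower quantities handled by the same localization.

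\emph{From $L^2$ to the stated bounds.} Given \eqref{int:3} and the degenerate-region bounds, \eqref{int:1} and \eqref{int:2} follow from mechanism (i): e.g. $q^{(k)}_{yy}(x) = q^{(k)}_{yy}(0)+\int_0^x q^{(k)}_{xyy}$ gives $\sup_x\|q^{(k)}_{yy}(x)w\|_{L^2_y}\lesssim 1 + L^{1/2}\|q^{(k)}_{xyy}w\|_{L^2_{x,y}}$, and similarly for $q^{(k)}_y w$, $q^{(k)}\langle y\rangle^{-1}$ (with an extra $y$-Hardy step), and --- after expanding $v^{(k)} = \p_x^k(\bar u q)$ and invoking the weighted Hardy inequality to carry $w$ --- for $v^{(k)}_y, v^{(k)}_{yy}, v^{(k)}_{yyy}, v^{(k)}_{xy}$, the term $v^{(k)}_{yyyy}$ being read off the equation as above. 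Finally \eqref{int:4} and \eqref{int:5} follow from mechanism (iii): $\|q^{(k)}\|_\infty, \|q^{(k)}_y\|_\infty, \|v^{(k)}_y\|_\infty$ need only the first-order $L^2$ control of \eqref{int:1}--\eqref{int:2} together with one FTC-in-$x$ to upgrade the relevant $L^2_{x,y}$ norms to $\sup_x L^2_y$ (the derivative Gagliardo--Nirenberg costs stays inside $X_{k,\theta}$), while $\|v^{(k)}_{yy}\|_\infty,\|v^{(k)}_{yyy}\|_\infty$ require $v^{(k)}_{yyy}$ and $v^{(k)}_{yyyy}$ in $\sup_x L^2_y$; since $X_{k,\theta}$ records $\|v^{(k)}_{yyyy}w\|$ only in the full $L^2_{x,y}$ norm, one more application of (i), $v^{(k)}_{yyyy}(x) = v^{(k)}_{yyyy}(0)+\int_0^x v^{(k+1)}_{yyyy}$, upgrades it to $\sup_x\|v^{(k)}_{yyyy}w\|_{L^2_y}\lesssim 1 + o_L(1)\|q\|_{X_{\langle k+1,\theta\rangle}}$, the $o_L(1)$ coming from the $L^{1/2}$ factor produced by (i) --- and this is exactly why the index $k+1$ appears in \eqref{int:5}.

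\emph{Main obstacle.} The whole difficulty is concentrated in the second paragraph: producing the $y\to0$ estimates with constants independent of $\theta$. Once the degenerate coercivity there has been restored in weight-absorbing form --- via $\bar u_\theta\ge\bar u\sim m_0 y$ and the second-order vanishing of $v^{(\theta)} = \bar u q^{(\theta)}$ --- everything else is routine bookkeeping with the fundamental theorem of calculus in $x$, Hardy, and Gagliardo--Nirenberg.
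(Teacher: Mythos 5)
Your proposal is correct and follows essentially the same route as the paper's proof: interpolation between the far-field control of $\bar u_\theta q^{(k)}_{xy}$ and the near-wall control of $\sqrt{\bar u}\,q^{(k)}_{xyy}$ using $y\lesssim\bar u\le\bar u_\theta$, the fundamental theorem of calculus in $x$ with the trace data from Lemma \ref{lemma.compat.1}, Hardy for the weights, the expansion $v=\bar u q$ together with the second-order vanishing of $v$ at the wall to recover $q^{(k)}_{yy}$ from $v^{(k)}_{yyy}$ near $y=0$, and Agmon/Sobolev for the $L^\infty$ bounds. The only blemish is your appeal to Lemma \ref{locvyyyy} to ``reduce'' $\|v^{(k)}_{yyyy}w\|$: that step is unnecessary (this quantity is by definition a component of $\|q\|_{X_{k,\theta}}$) and would in any case be circular, since the proof of Lemma \ref{locvyyyy} invokes the present lemma.
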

\begin{proof} The first step is to obtain control over $\| q^{(k)}_{xy} \|$ via interpolation. 
\begin{align*}
\| q^{(k)}_{xy} \{1 - \chi(\frac{y}{\delta}) \} \| \le \delta^{-1} \| \bar{u}_\theta q^{(k)}_{xy} \| \le L \delta^{-1} \sup_x \|\bar{u}_\theta q^{(k)}_{xy}\|_{L^2_y} \le L \delta^{-1} \| q \|_{X_{k, \theta}}. 
\end{align*}

\noindent Near the $\{y = 0\}$ boundary, one interpolates: 
\begin{align*}
|( \chi(\frac{y}{\delta}) \p_y \{ y \}, |q^{(k)}_{xy}|^2)| \lesssim &\| \chi y q^{(k)}_{xyy} \|^2 + ( \frac{y}{\delta} \chi'(\frac{y}{\delta}), |q^{(k)}_{xy}|^2) \\
\lesssim & \delta \| q^{(k)} \|_{X_\theta}^2 + L^2 \delta^{-2} \| q^{(k)} \|_{X_\theta}^2. 
\end{align*}

\noindent  Optimizing $\sqrt{\delta} + L \delta^{-1}$, one obtains $\delta = L^{2/3}$. Thus, $\| q^{(k)}_{xy} \| \lesssim L^{1/3} \| q \|_{X_{k, \theta}}$. From here, a basic Poincare inequality gives: 
\begin{align*}
\| q^{(k)}_y \| = \| q^{(k)}_y|_{x = 0} + \int_0^x q^{(k)}_{xy} \| \lesssim \sqrt{L} |q^{(k)}_y|_{x = 0}\| + L \| q^{(k)}_{xy} \|
\end{align*}

\noindent  From here, Hardy inequality gives immediately $\| q^{(k)} \langle y \rangle^{-1} \| \le \| q^{(k)}_y \|$. 

The next step is to establish the uniform bound via straightforward Sobolev embedding: 
\begin{align*}
|q^{(k)}|^2 \lesssim \sup_x |q^{(k)}_y \langle y \rangle\|^2 \lesssim |q^{(k)}_y|_{x = 0} \langle y \rangle\|^2 + L \| q^{(k)}_{xy} \langle y \rangle \|^2 \lesssim 1 + o_L(1) \| q \|_{X_{k, \theta}}. 
\end{align*}

A Hardy computation gives:  
\begin{align*}
\| q^{(k)}_{xy} w (1 - \chi) \| \lesssim& \| q^{(k)}_{xy} \|_{2,loc} + \| q^{(k)}_{xyy} w (1 - \chi )\| \lesssim \| q \|_{X_{k, \theta}}.
\end{align*}

We record the following expansions which follow from the product rule upon recalling that $v = \bar{u} q$:
\begin{align}
\begin{aligned} \label{expressionsk}
&|v^{(k)}| \lesssim \sum_{j = 0}^k |\bar{u}^j q^{(k-j)}| \\
&|v^{(k)}_y| \lesssim \sum_{j = 0}^k |\bar{u}^j_{y} q^{(k-j)}| + |\bar{u}^j q^{(k-j)}_y| \\
&|v^{(k)}_{yy}| \lesssim \sum_{j = 0}^k |\bar{u}^j_{yy} q^{(k-j)}| + |\bar{u}^j_{y} q^{(k-j)}_{y}| + |\bar{u}^j q^{(k-j)}_{yy}| \\
&|v^{(k)}_{yyy}| \lesssim \sum_{j = 0}^k |\bar{u}^j_{yyy} q^{(k-j)}| + |\bar{u}^j_{yy} q^{(k-j)}_y| + |\bar{u}^j_{y} q^{(k-j)}_{yy}|+ |\bar{u}^j q^{(k-j)}_{yyy}| \\
&|v^{(k)}_{xy}| \lesssim \sum_{j = 0}^k |\bar{v}^{j}_{yy} q^{(k-j)}| + |\bar{u}^j_{y} q^{(k-j)}_x| + |\bar{v}^j_{y} q^{(k-j)}_y| + |\bar{u}^j q^{(k-j)}_{xy}| \\
&|v^{(k)}_{xyy}| \lesssim \sum_{j = 0}^k |\bar{v}^j_{yyy} q^{(k-j)}| + |\bar{u}^j_{yy} q^{(k-j)}_x| + |\bar{v}^j_{yy} q^{(k-j)}_y| + |\bar{u}^j_{y} q^{(k-j)}_{xy}| \\
& \hspace{10 mm} + |\bar{v}^j_{y} q^{(k-j)}_{yy}| + |\bar{u}^j q^{(k-j)}_{xyy}|.
\end{aligned}
\end{align}

We will restrict to $k = 0$ for the remainder of the proof, as the argument works for general $k$ in a straightforward way. From (\ref{expressionsk}), $\| v_y\|$ follows obviously. Next, 
\begin{align} \label{broad}
\| v_{yy} \| \lesssim & \| \bar{u}_{yy} q \| + \| \bar{u}_{y} q_y \| + \| \bar{u} q_{yy} \| \lesssim  \sqrt{L} + o_L(1) \| q \|_{X_\theta}. 
\end{align}

\noindent  From here, $\| v_{yyy} \|_{loc}$ can be interpolated in the following way: 
\begin{align*}
&(v_{yyy}, v_{yyy} \chi(\frac{y}{\delta})) = ( \p_y \{ y \} \chi(\frac{y}{\delta}), |v_{yyy}|^2) \\
= & - ( y \chi(\frac{y}{\delta}) v_{yyy}, \p_x^{k-1} v_{yyyy}) - ( y \delta^{-1} \chi'(\frac{y}{\delta}), |v_{yyy}|^2) \\
\lesssim &\delta^2 \| v_{yyyy} \|^2 + \| \psi_\delta v_{yyy} \|^2. 
\end{align*}

\noindent  For the far-field component, we may majorize via: 
\begin{align*}
|(v_{yyy}, v_{yyy} \{1 - \chi(\frac{y}{\delta}) \} )| \lesssim \| \psi_\delta v_{yyy} \|^2.
\end{align*}

\noindent  Here $\psi_\delta = 1 - \chi(\frac{10 y}{\delta})$, the key point being that both $\{1 - \chi(\frac{y}{\delta})\}$ and $\chi'(\frac{y}{\delta})$ are supported in the region where $\psi_\delta = 1$. To estimate this term, we may integrate by parts: 
\begin{align*}
( \psi v_{yyy}, v_{yyy}) = &- ( \psi v_{yy}, v_{yyyy})  - ( \delta^{-1} \psi' v_{yy}, v_{yyy}) \\
= & - ( \psi v_{yy}, v_{yyyy}) + ( \delta^{-2} \frac{\psi''}{2}, |v_{yy}|^2) \\
\lesssim & \delta^2 \|  v_{yyyy} \|^2 + N_\delta \| v_{yy} \|^2. 
\end{align*}

\noindent  Thus, 
\begin{align} \label{interp.3}
\| v_{yyy} \| \le & \delta \| v_{yyyy} \| + N_\delta \| v_{yy} \|.
\end{align}

\noindent  We combine the above with (\ref{broad}) to select $\delta = L^{0+}$ to achieve control over $\|\p_y^j v^{(k)}\|$ for $j = 1,2,3$. 

We now use the elementary formula $\frac{1}{a+b} = \frac{1}{a} - \frac{b}{a+b}$ to write: 
\begin{align*}
q = \frac{v}{u_s} = \frac{v}{u_{sy}(0) y + [u_s - u_{sy}(0)y]} = \frac{1}{u_{sy}(0)} \frac{v}{y} - v \frac{u_s - u_{sy}(0)y}{y u_{sy}(0) u_s}.
\end{align*}

Using the estimates $u_s \gtrsim y$ as $y \downarrow 0$ and $|u_s - u_{sy}(0)y| \lesssim y^2$ as $y \downarrow 0$, it is easy to see that the second quotient above is bounded and in fact $\mathcal{C}^\infty$. We may thus limit our study to $q_0 := \frac{v}{y}$. We let $k_1 + k_2 = 3$ and differentiate the formula:
\begin{align} \n
q_0(x,y) = \frac{1}{y} \int_0^y v_y(x,y') \ud y'  = \int_0^1 v_y(x,ty) \ud t, 
\end{align}
where we changed variables via $ty = y'$. From here, we can obtain: 
\begin{align}
\| q_{yy} \chi(y) \| \lesssim \| \int_0^1 t^2 v_{yyy}(x, ty) \ud t \|_{loc} \lesssim  \| v_{yyy} \|. 
\end{align}

\noindent  Away from the $\{y = 0\}$ boundary, we estimate trivially: 
\begin{align*}
\| q_{yy}\{1 - \chi(\frac{y}{\delta}) \} w \| &\lesssim \| \bar{u} q_{yy} w \{1 - \chi(\frac{y}{\delta}) \} \|_{2} \\
& \lesssim \sqrt{L} \|\bar{u} q_{yy} w\|_{x = 0} + L \| \sqrt{\bar{u}} q_{xyy} w \|. 
\end{align*}

\noindent  From here, obtaining $\| q_y w \|$ follows from Hardy. We now turn our attention to the weighted estimates for $v_y, v_{yy}, v_{xy}, v_{xyy}$, which follow from (\ref{expressionsk}), whereas for $v_{yyy}$, we use the Prandtl equation to produce the identity: 
\begin{align*}
v_{yyy} = & \bar{u}_{yyy} q + 3 \bar{u}_{yy} q_y + 3 \bar{u}_{y} q_{yy} + \bar{u} q_{yyy} \\
= &  \Big( - \bar{u} \bar{v}_{yy} + \bar{v} \bar{u}_{yy} \Big) q +  3 \bar{u}_{yy} q_y + 3 \bar{u}_{y} q_{yy} + \bar{u} q_{yyy}
\end{align*}

The uniform estimates subsequently follow from straightforward Sobolev embeddings.
\end{proof}

We now perform our first energy estimate. 
\begin{lemma}[$\mathcal{E}_k$ Estimate] \label{energy:estImate:1089}  Assume $q$ solves \eqref{origPrLay.beta}. Then the following inequality is valid: 
\begin{align}
\begin{aligned} \label{Ek}
\| q \|_{\mathcal{E}_{k, \theta}}^2 \lesssim & \|\bar{u} \p_x^k q_{xy}|_{x = 0}\|^2  + o_L(1)(p_{\langle k \rangle} + \kappa p_{\langle k + 1 \rangle}) (1 + \| q \|_{X_{\langle k, \theta \rangle}}^2 ) \\
& + o_L(1) C(u^0) + o_L(1) \| \p_x^{k+1} \p_{xy}g_1 \langle y \rangle \|^2.
\end{aligned}
\end{align}
\end{lemma}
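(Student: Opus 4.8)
The plan is to differentiate the LDP equation \eqref{origPrLay.beta} exactly $k$ times in $x$ and test against the multiplier $\p_x^{k+1} q = \p_x^k q_x$, integrating over $(0, x_0) \times \mathbb{R}_+$ and taking the supremum over $x_0 \in (0, L)$. This is the higher-order analogue of the energy estimate sketched in the introduction leading to \eqref{positive:energy:intro}, so the bookkeeping is: (i) the Rayleigh term $-\p_x^k \p_{xy}\{\bar u^2 q_y\}$ produces, after integrating by parts in $x$ and $y$, the positive term $\frac{\p_x}{2}\|\bar u \p_x^k q_{xy}\|^2$ plus commutator terms where $\p_x$-derivatives land on $\bar u^2$; (ii) the $\p_y^4 v$ term, after integrating by parts twice in $y$ and expanding $v = \bar u q$, produces $\|\sqrt{\bar u}\,\p_x^k q_{xyy}\|^2$, the crucial positive boundary term $\bar u_y(0)\,|\p_x^k q_{xy}(0)|^2$ (using $\bar u_y|_{y=0} > 0$, which is where $o_L(1) C(u^0)$ enters through the lower bound on $\bar u_y(0)$), and a collection of bulk terms; (iii) the $\kappa \Lambda$ and $\kappa U$ terms, which carry the factor $\kappa$ and contribute the $\kappa p_{\langle k+1\rangle}$ piece; and (iv) the forcing $\p_x^{k+1}\p_{xy} g_1$, handled by Cauchy–Schwarz against $q_x/\langle y\rangle$ using the weight $\langle y\rangle$.

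The key structural point — and the reason this closes — is that every error term is either (a) quadratic in $\|\cdot\|_{\mathcal{E}_{k,\theta}}$ or $\|\cdot\|_{X_{\langle k,\theta\rangle}}$ with a gain of $o_L(1)$ coming from the smallness of $L$ (via the Poincaré/interpolation inequalities of Lemma \ref{lemma:Sobolev}, e.g. $\|q^{(k)}_{xy}\| \lesssim L^{1/3}\|q\|_{X_{k,\theta}}$ and $\|q^{(k)}_y\|, \|q^{(k)}_{yy}\| = o_L(1)(1 + \|q\|_{X_{k,\theta}})$), or (b) involves strictly fewer $\p_x$-derivatives on $q$ and is therefore absorbed into $p_{\langle k\rangle}(1 + \|q\|_{X_{\langle k,\theta\rangle}}^2)$. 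Concretely: after expanding $\p_x^k\p_{xy}\{\bar u^2 q_y\}$ by Leibniz, terms with $j \ge 1$ derivatives on $\bar u$ carry $\|\p_x^{\langle k\rangle}\bar u\|_\infty$-type factors (bounded by $p_{\langle k\rangle}$ or $p_{\langle k+1\rangle}$ via \eqref{int:4}--\eqref{int:5} and the divergence-free relation converting $\p_x\bar u$ into $\bar v_y$), multiplied against lower-order $q$-norms; the $j=0$ terms like $(2\bar u\bar u_x q_{xy}, \p_x^k q_{xy})$ and $(\bar u_{yy}\p_x^k q_{xy}, \p_x^k q_{xy})$ are controlled using $\|\bar u_x/\bar u\|_\infty$ and $\|\bar u_{yy}\|_\infty$ times the energy, gaining $o_L(1)$. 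The only terms requiring care with $q_{yy}$ at second order — such as $(2\bar u_{xy}\p_x^k q_{yy}, \p_x^k q_{xy})$ arising from integrating by parts the $\p_y^4 v$ contribution — are estimated by $\|\p_x^{\langle k\rangle} q_{yy}\| \lesssim o_L(1)(1 + \|q\|_{X_{\langle k,\theta\rangle}})$ from \eqref{int:1}, exactly as in the introduction's remark that one must avoid the degeneracy at $y=0$ by invoking the higher norm $\|q_{yyy}w(1-\chi)\|$ and $\|v_{yyyy}w\|$.

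The main obstacle I anticipate is the boundary-term accounting at $y = 0$: several integrations by parts in $y$ generate boundary contributions involving $\p_x^k q_y(0)$, $\p_x^k q_{xy}(0)$, and their products with traces of $\bar u$-derivatives, and one must verify that the ``bad'' boundary terms either cancel in pairs (as in the introduction, where $(2\bar u_y q_{xy}, q_{xy})_{y=0} - (\bar u_y q_{xy}, q_{xy})_{y=0}$ collapses to the single positive copy) or are absorbed into the positive $\bar u_y(0)|\p_x^k q_{xy}(0)|^2$ via Young's inequality with a small constant, possibly at the cost of a term $|\p_x^{\langle k-1\rangle}q_{xy}(0)|_{L^2_x}^2 \le \|q\|_{X_{\langle k-1,\theta\rangle}}^2$ handled inductively. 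One also needs to confirm that the initial-data term $\|\bar u\p_x^k q_{xy}|_{x=0}\|^2$ is finite, which follows from Lemma \ref{lemma.compat.1} and the compatibility conditions. Once the positive terms $\frac12\|\bar u\p_x^k q_{xy}\|^2|_{x=x_0}$, $\|\sqrt{\bar u}\p_x^k q_{xyy}\|^2$, and $\inf_x\bar u_y(0)\cdot|\p_x^k q_{xy}(0)|_{L^2_x}^2$ are isolated on the left and everything else is shown to be $o_L(1)$ times energy-type quantities plus the inductive/forcing/$u^0$ terms on the right, absorbing the $o_L(1)\|q\|_{\mathcal{E}_{k,\theta}}^2$ into the left for $L$ small yields \eqref{Ek}.
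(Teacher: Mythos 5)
Your proposal matches the paper's proof in all essentials: the same multiplier $\p_x^{k+1}q$, the same isolation of the three positive terms (with the boundary contribution from integrating by parts the $2\bar{u}_y\p_x^{k+1}q_y$ term absorbed into the left-hand side thanks to the factor $2$), the same $o_L(1)$ gains via Lemma \ref{lemma:Sobolev}, and the same treatment of the Rayleigh commutators, the $\kappa\Lambda$ and $\kappa U$ terms (producing $\kappa p_{\langle k+1\rangle}$), the lower-order boundary commutators, and the weighted forcing. One small correction: the $o_L(1)C(u^0)$ term in \eqref{Ek} arises from estimating $\kappa(\p_x^{k+1}U(u^0),\p_x^{k+1}q)$, not from the lower bound on $\bar{u}_y(0)$, but since you handle the $U$ term correctly in your item (iii) this does not affect the argument.
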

\begin{proof} We have the following energy inequality,
\begin{align} \label{est.k.weak.int}
\frac{\p_x}{2} \| \bar{u}_\theta \p_x^{k+1} q^{(\theta)}_y \|_x^2 + \| \sqrt{\bar{u}_\theta} \p_x^{k+1} q_{yy}^{(\theta)} \|^2 + 2 \bar{u}_{y}(0) \Big(\p_x^{k+1} q^{(\theta)}_y(0) \Big)^2 \le |\mathcal{I}_{k+1}^{(\theta)}|,
\end{align}

\noindent where the integral $\mathcal{I}^{(\theta)}_{k+1}$ is defined 
\begin{align} \n
\mathcal{I}_{k+1}^{(\theta)} :=& \sum_{j = 1}^{k+1} \binom{k+2}{j} \Big( (\p_x^j \{ \bar{u}_\theta^2 \} \p_x^{k+2-j} q^{(\theta)}_y, \p_x^{k+1} q^{(\theta)}_y ) + (\p_x^j \bar{u} \p_x^{k+1-j} q^{(\theta)}_{yy}, \p_x^{k+1} q^{(\theta)}_{yy}) \Big) \\ \n
& + \sum_{j = 0}^{k+1} \binom{k+1}{j} (2 \p_x^j \p_y \bar{u} \p_x^{k+1-j} q^{(\theta)}_{y}, \p_x^{k+1} q^{(\theta)}_{yy}) \\ \n
& + \sum_{j = 0}^{k+1} \binom{k+1}{j} (\p_x^j \p_y^2 \bar{u} \p_x^{k+1-j} q^{(\theta)}, \p_x^{k+1} q^{(\theta)}_{yy}) + \kappa( \p_x^{k+1} U, \p_x^{k+1} q^{(\theta)})  \\ \n
& +\kappa  \sum_{j = 0}^{k+1} \binom{k+1}{j} (\p_x^j \bar{v}_{xyy} I_x[\p_x^{k+1-j} v^{(\theta)}_y] + \p_x^j \bar{v}_{yy} \p_x^{k+1-j} v^{(\theta)}_y, \p_x^{k+1} q^{(\theta)}) \\ \n
& + \kappa \sum_{j = 0}^{k+1} \binom{k+1}{j} \Big( (I_x[   \p_x^{k+1-j} v^{(\theta)}_{yy} ], \p_y \{ \p_x^j \bar{v}_x \p_x^{k+1} q^{(\theta)} \}) \\ \n
& + (\p_x^{k+1-j} v^{(\theta)}_{yy}, \p_y \{ \p_x^j \bar{v} \p_x^{k+1} q^{(\theta)} \}) \Big)  + 2\sum_{j =1}^{k+1} \binom{k+1}{j} \p_x^j \bar{u}_{y} \p_x^{k+1-j} q^{(\theta)}_y(0) \p_x^{k+1} q^{(\theta)}_y(0) \\ \label{Inthetak}
&+ (\p_x^{k+1} F^{(\theta)}, \p_x^{k+1} q^{(\theta)}) =  \sum_{\ell = 1}^{11} \mathcal{I}^{(\theta)}_{k+1,\ell}.
\end{align}

For this estimate, we further integrate over $x \in [0, x_0]$, where $0 < x_0 < L$. This produces 
\begin{align} \label{est.k.weak.int:energy}
\sup_x \| \bar{u} \p_x^{k+1} q_y \|_x^2 + \| \sqrt{\bar{u}} \p_x^{k+1} q_{yy} \|^2 + 2 \| \sqrt{\bar{u}_{y}} \p_x^{k+1} q_y \|_{y = 0}^2 \le \int_0^L |\mathcal{I}_{k+1}^{(\theta)}|,
\end{align}

We recall now the various contributions into $\mathcal{I}_{k+1}^{(\theta)}$ from \eqref{Inthetak}. We start with the first of these, via  
\begin{align}\n
\int_0^{L} |\mathcal{I}_{k+1, 1}^{(\theta)}|  = &\int_0^L \Big| \sum_{j = 1}^{k+2} \binom{k+2}{j} \sum_{j_1 = 0}^j \binom{j}{j_1} (\p_x^{j_1} \bar{u}_\theta \p_x^{j - j_1} \bar{u}_\theta \p_x^{k+2-j} q_y, \p_x^{k+1} q_y) \Big| \\ \n
 \lesssim & \Big\| \frac{\p_x^{\langle \frac{k+1}{2} \rangle} \bar{u}_\theta}{\bar{u}_\theta} \Big\|_\infty^2 \| \bar{u}_\theta \p_x^{\langle k + 1 \rangle} q_y \|^2 \\ \n
& + \Big\| \frac{\p_x^{\langle \frac{k+2}{2} \rangle}\bar{u}_\theta}{\bar{u}_\theta} \Big\|_\infty \| \p_x^{\langle \frac{k+2}{2} \rangle} q_y \|_{L^\infty_x L^2_y} \| \p_x^{k+2} \bar{u}_\theta \|_{L^2_x L^\infty_y} \| \bar{u}_\theta \p_x^{k+1} q_y \| \\ \n
\lesssim & o_L(1) p(\| \bar{q} \|_{X_{\langle k \rangle}}) \| q \|_{X_{\langle k \rangle}}^2.
\end{align}
We have used the fact that $\frac{\p_x^{j} \bar{u}_\theta}{\bar{u}_\theta} \sim \p_x^{j-1} \p_{yy} \bar{v}$ for any $j$, in particular with $j = \langle \frac{k+2}{2} \rangle$ and then appealed to \eqref{int:4}. We also use \eqref{int:1} to bound $\| \p_x^j q_y\|$ for $j = \frac{k+2}{2}$.

We now move to $\p_y^4$ term from the specification of $\mathcal{I}^{(\theta)}_{k+1}$, the first of which reads 
\begin{align} \label{sub1.1}
\int_0^L | \mathcal{I}_{k+1,2}^{(\theta)} | \le\int_0^L | \sum_{j = 1}^{k+1} \binom{k+1}{j} (\p_x^j \bar{u} \p_x^{k+1-j} q_{yy}, \p_x^k q_{yy}) |.
\end{align}

\noindent The $j = k+1$ case from above contributes 
\begin{align*}
|\int_0^{x_0} (\p_x^{k+1} \bar{u} q_{yy}, \p_x^{k+1} q_{yy})| \le & \Big\| \frac{\p_x^{k+1} \bar{u}}{\bar{u}} \Big\|_{L^2_x L^\infty_y} \| \sqrt{\bar{u}} q_{yy} \|_{L^\infty_x L^2_y} \| \sqrt{\bar{u}} \p_x^{k+1} q_{yy} \| \\
\lesssim & ( 1 + o_L(1) \| \bar{q} \|_{X_{\langle k \rangle}})(1 + \| q \|_{X_{\langle k \rangle}}) \| \sqrt{\bar{u}} \p_x^{k} q_{xyy} \|. 
\end{align*}

\noindent Above, we have used the Hardy and Agmon inequalities, as well as \eqref{int:2}, which gives
\begin{align*}
\Big\| \frac{\p_x^{k+1} \bar{u}}{\bar{u}} \Big\|_{L^2_x L^\infty_y} = & \Big\| \frac{\p_x^{k} \bar{v}_y}{\bar{u}} \Big\|_{L^2_x L^\infty_y} \lesssim \| v^{(k)}_{yy} \|_{L^2_x L^\infty_y} +  \| v^{(k)}_y \|_{L^2_x L^\infty_y} \\
\lesssim & \| v^{(k)}_{yyy} \| + \| v^{(k)}_{yy} \| +  \| v^{(k)}_y \| \lesssim o_L(1)  (1 + \| \bar{q} \|_{X_{\langle k \rangle}}).
\end{align*}

\noindent The intermediate cases, $j = 1,...,k$ can be bounded above by
\begin{align*}
&\Big\| \frac{\p_x^{\langle k \rangle} \bar{u}}{\bar{u}} \Big\|_{\infty} \| \sqrt{\bar{u}} \p_x^{\langle k \rangle} q_{yy} \|  \| \sqrt{\bar{u}} \p_x^k q_{xyy} \| \lesssim  (1 + o_L(1) p_{k}) \| q \|_{X_k} \| q \|_{X_{\langle k \rangle}}. 
\end{align*} 

The next term is 
\begin{align} \label{sub1.2}
\int_0^{L} |\mathcal{I}_{k+1, 3}^{(\theta)}| = &\int_0^{L} |\sum_{j = 0}^{k+1} \binom{k+1}{j} (2 \p_x^j \p_y \bar{u} \p_x^{k+1-j} q_y, \p_x^{k+1} q_{yy}) |\\ \n
= &\int_0^{L} | ( 2 \bar{u}_{y} \p_x^{k+1} q_y, \p_x^{k+1} q_{yy} ) | + \sum_{1 \le j \le \frac{k+1}{2}} \int_0^{L} |( \p_x^j \bar{u}_{y} \p_x^{k+1 - j} q_y, \p_x^{k+1} q_{yy} )| \\ \n
& + \sum_{\frac{k+1}{2} \le j \le k+1} \int_0^{L} |( \p_x^j \bar{u}_{y} \p_x^{k+1-j} q_y, \p_x^{k+1} q_{yy} ) |. 
\end{align}

First, using $\bar{u}_{yy} = \bar{U}_0''(y) - I_x[\bar{v}_{yyy}]$ and also integration by parts gives: 
\begin{align*}
(\ref{sub1.2}.1) \le &  \int_0^{L} |( \bar{u}_{yy} \p_x^{k+1} q_y, \p_x^{k+1} q_y )| + \int_0^{L} \bar{u}_{y}(0) |\p_x^{k+1} q_y(0)|^2 \\
\le & o_L(1)(1 + p_{\langle2 \rangle}) \| q \|_{X_k}^2 + \|\sqrt{\bar{u}_y} |\p_x^{k+1}q_y \|_{y = 0}^2.  
\end{align*}

\noindent We now absorb that the crucial boundary term above can be absorbed into the left-hand side of \eqref{est.k.weak.int:energy} due to the factor of $2$ in \eqref{est.k.weak.int:energy} as compared with the factor of $1$ above. 

 We now treat, where we have used again that $\bar{u}_x = - \bar{v}_y$ and subsequently \eqref{int:1} - \eqref{int:2} to bound the $\bar{u}$ contributions: 
\begin{align*}
(\ref{sub1.2}.2) \le &\int_0^{L} |( \p_x^{k+1} q_y, \p_x^j \bar{u}_{yy} \p_x^{k+1-j} q_y )| + \int_0^{L} |( \p_x^{k+1} q_y, \p_x^j \bar{u}_{y} \p_x^{k+1 -j} q_{yy} )| \\
& + \int_0^{L}| \p_x^j \bar{u}_{y} \p_x^{k-j+1} q_y \p_x^{k+1} q_y(0)| \\
\le  & \int_0^{L} | ( \p_x^{k+1} q_y, \p_x^j \bar{u}_{yy} \p_x^{k+1-j} q_y )| + \int_0^{L}| (  \p_x^{k+1} q_y, \p_x^j \bar{u}_{y} \p_x^{k+1 -j} q_{yy} )|\\
& + \int_0^{L}| \p_x^j \bar{u}_{y} \p_x^{k-j+1} q_y \p_x^{k+1} q_y(0)| \\
\lesssim & \| \p_x^{\langle \frac{k+1}{2}\rangle} \bar{u}_{yy} \|_\infty \| \p_x^{\langle k \rangle} q_y \| \| \p_x^{k+1} q_y \| + \| \p_x^{\langle \frac{k+1}{2} \rangle} \bar{u}_{y} \|_\infty \| \p_x^{\langle k \rangle} q_{yy} \| \| \p_x^{k+1} q_{y} \| \\
& + o_L(1) \| q \|_{X_{\langle k \rangle}}^2 \\
\lesssim & o_L(1) p_{\langle k \rangle} ( \| q \|_{X_{\langle k \rangle}}^2 + 1 ),
\end{align*}

Similarly, we obtain by invoking the definition of $\| \bar{q} \|_{X_k}$, as well as \eqref{int:1}
\begin{align*}
(\ref{sub1.2}.3) \le &\int_0^{L} |(\p_x^{k+1} \bar{u}_{yy} q_y, \p_x^{k+1} q_y)| + \int_0^{L}| (\p_x^{k+1} \bar{u}_y q_{yy}, \p_x^{k+1}q_y)|\\
&  + \int_0^{L}| \sum_{j = \frac{k+1}{2}}^{k} (\p_x^j \bar{u}_{yy} \p_x^{k+1-j} q_y, \p_x^{k+1}q_y) | + \int_0^{L}| \sum_{j = \frac{k+1}{2}}^{k}(\p_x^j \bar{u}_y \p_x^{k+1-j} q_{yy}, \p_x^{k+1}q_y)| \\
\lesssim & \| \p_x^k \bar{v}_{yyy} \|_{L^2_x L^\infty_y} \| q_y \|_{L^\infty_x L^2_y} \| \p_x^{k+1}q_y \| + \| \p_x^k \bar{v}_{yy} \|_{L^2_x L^\infty_y} \| q_{yy} \|_{L^\infty_x L^2_y} \| \p_x^{k+1} q_{yy} \| \\
& + \| \p_x^{\langle k - 1 \rangle} \bar{v}_{yyy} \|_{L^\infty} \| \p_x^{\langle \frac{k+1}{2} \rangle} q_y \| \| \p_x^{k+1} q_y \|  + \| \p_x^{\langle k - 1 \rangle} \bar{v}_{yy} \|_{L^\infty} \| \p_x^{\langle  \frac{k+1}{2} \rangle} q_{yy} \| \| \p_x^{k+1} q_y \| \\
\lesssim & o_L(1) (1 + \| \bar{q} \|_{X_{\langle k \rangle}}) (1 + \| q \|_{X_{\langle k \rangle}} )^2
\end{align*}

Next, we move to: 
\begin{align} \label{sub1.3}
\int_0^L |\mathcal{I}_{k+1, 4}^{(\theta)}| \le \sum_{j = 0}^{k+1} \binom{k+1}{j} \int_0^{L} |( \p_x^j \bar{u}_{yy} \p_x^{k+1 - j} q, \p_x^{k+1} q_{yy})|.
\end{align}
We split the above term into several cases. First, let us handle the $j = 0$ case for which (\ref{coe.2}) gives us the required bound by Hardy's inequality : 
\begin{align*}
|(\ref{sub1.3})[j=0]| \lesssim& \| \frac{1}{\bar{u}} \bar{u}_{yy} y  \|_\infty \| \p_x^{k} q_{xy} \| \| \sqrt{\bar{u}} \p_x^k q_{xyy} \| \lesssim o_L(1) p_{\langle k \rangle} \| q \|_{X_{k}}^2. 
\end{align*}

We now handle the case of $1 \le j \le k/2$, which requires a localization using $\chi$ as defined in (\ref{basic.cutoff}). We invoke \eqref{int:2} for the $\bar{v}$ contribution and \eqref{int:4}:
\begin{align*}
|(\ref{sub1.3})[1-\chi]| \lesssim &\| \p_x^{\frac{k}{2}} \bar{v}_{yyy} \|_{L^\infty_x L^2_y} \| \p_x^{\langle k \rangle} q \|_{L^2_x L^\infty_y} \| q \|_{X_k} \lesssim  o_L(1) p_{\langle k \rangle}  \| q \|_{X_{\langle k \rangle}}^2.
\end{align*}

  For the localized component, we integrate by parts in $y$ and invoking Lemma \ref{lemma:Sobolev}: 
\begin{align} \n
(\ref{sub1.3})[\chi] = & \sum_{j = 0}^{k+1} \int_{0}^{x_0} ( \p_x^k q_y \chi', \p_x^j \bar{u}_{yy} \p_x^{k+1-j}q ) +  \sum_{j = 0}^{k+1} \int_0^{x_0} ( \p_x^k q_y \chi, \p_x^j \bar{u}_{yyy} \p_x^{k+1-j} q ) \\ \label{fetty}
& +  \sum_{j = 0}^{k+1} \int_0^{x_0} \langle \p_x^k q_y \chi, \p_x^j \bar{u}_{yy} \p_x^{k+1-j} q_y \rangle \\ \n
\lesssim & \| \p_x^{k-1} q_{xy} \| \| \p_x^{\frac{k}{2}} \bar{v}_{yyy} \| \| \p_x^{\langle k \rangle} q \|_{L^\infty_{loc}}  + \| \p_x^{k-1} q_{xy} \| \| \p_x^{\frac{k}{2}} \bar{v}_{yyyy} \| \| \p_x^{\langle k \rangle} q \|_{L^\infty_{loc}} \\ \n
& + \| \p_x^{k-1} q_{xy} \| \| \p_x^{\frac{k}{2}} \bar{v}_{yyy} \|_\infty \| \p_x^{\langle k \rangle} q_{xy} \| \\  \n
\lesssim & o_L(1)p_{\langle k \rangle} \| q \|_{X_{\langle k \rangle}}^2.  
\end{align}

We now treat the case in which $k/2 \le j \le k$, which still requires localization and \eqref{int:4} and \eqref{int:2}
\begin{align*}
|(\ref{sub1.3})[\chi_{\ge 1}]| \lesssim  &\| \p_x^k q_{xyy} \sqrt{\bar{u}} \| \| \p_x^{\frac{k}{2}} q \|_{L^2_x L^\infty_y} \|\p_x^{\langle k - 1 \rangle} \bar{v}_{yyy} \|_{L^\infty_x L^2_y} \\
\lesssim & o_L(1) p_{\langle k \rangle} \| q \|_{X_{\langle k \rangle}}^2.
\end{align*}

Finally, we deal with the case when $y \lesssim 1$ for $k/2 \le j \le k$, which again requires integration by parts in $y$ as in (\ref{fetty}), upon invoking \eqref{int:1}, \eqref{int:2}
\begin{align*}
|(\ref{sub1.3}[j \ge k/2])| \lesssim &\| \p_x^{k-1} q_{xy} \| \| \p_x^{\langle k - 1 \rangle} \bar{v}_{yyy} \| \| \p_x^{\frac{k}{2}} q \|_{L^\infty_{loc}} \\
& + \| \p_x^{k-1} q_{xy} \| \| \p_x^{\langle k - 1 \rangle} \bar{v}_{yyyy} \| \| \p_x^{\frac{k}{2}} q \|_{L^\infty_{loc}} \\
& + \| \p_x^{k-1} q_{xy} \| \| \p_x^{\langle k - 1 \rangle}\bar{v}_{yyy} \|_\infty \| \p_x^{\langle k/2 \rangle} q_{xy} \| \\
\lesssim & o_L(1)p_{\langle k \rangle}(1 + \| q \|_{X_{\langle k \rangle}}^2). 
\end{align*}

We now move to the $\Lambda$ terms. By Hardy's inequality in $y$, we obtain 
\begin{align*}
\kappa \int_0^L |\mathcal{I}_{k+1,6}^{(\theta)}| \le & \kappa \sum_{j = 0}^{k+1} \binom{k+1}{j} \int_0^{x_0}(  \p_x^{j} \bar{v}_{xyy} \p_x^{k+1-j}I_x[v_y] , \p_x^{k+1} q) \\
 \lesssim &\kappa \| \p_x^{\langle \frac{k}{2} \rangle} \bar{v}_{xyy} \langle y \rangle \|_\infty \| \p_x^{\langle k+1 \rangle} I_x[v_y] \| \| \p_x^{k+1} q_y \| \\
& + \kappa \| \p_x^{\langle k+1 \rangle} \bar{v}_{xyy}  \| \| \p_x^{\langle \frac{k+1}{2} \rangle} I_x[v_y] \langle y \rangle \|_\infty \| \p_x^{k+1} q_y \|. 
\end{align*}
Next, we similarly have
\begin{align*}
\kappa \int_0^L |\mathcal{I}_{k+1,7}^{(\theta)}| \le& \kappa  \sum_{j = 0}^{k+1} \binom{k+1}{j} \int_0^{L}| (  \p_x^j \bar{v}_{yy} \p_x^{k+1-j} v_y, \p_x^{k+1} q  )| \\
\lesssim &\kappa  \| \p_x^{\langle \frac{k+1}{2} \rangle} \bar{v}_{yy} \langle y \rangle \|_\infty \| \p_x^{\langle k+1 \rangle} v_y \langle y \rangle \| \| \p_x^{k+1} q_y \| . 
\end{align*}
Next, we have 
\begin{align*}
\kappa  \int_0^L |\mathcal{I}_{k+1,8}^{(\theta)}| \le \kappa  \sum_{j = 0}^{k+1} \binom{k+1}{j} & \int_0^{L}| ( \p_x^j \bar{v}_{x} \p_x^{k+1-j} I_x[\p_y^3 v], \p_x^{k+1} q )| \\
 \le &\kappa \sum_{j = 0}^{k+1} \binom{k+1}{j} \int_0^{L}| ( \p_x^j \bar{v}_{xy} \p_x^{k+1-j} I_x[\p_y^2 v], \p_x^{k+1} q )| \\
 & +\kappa  \sum_{j = 0}^{k+1} \binom{k+1}{j} \int_0^{L} |( \p_x^j \bar{v}_{x} \p_x^{k+1-j} I_x[\p_y^2 v], \p_x^{k+1} q_y )|\\
\lesssim &\kappa  \| \p_x^{\langle \frac{k+1}{2} \rangle} \bar{v}_{xy} \langle y \rangle \|_\infty \| \p_x^{\langle k+1 \rangle} I_x[v_{yy}] \| \| \p_x^{k+1} q_y \| \\
& + \kappa  \| \p_x^{\langle \frac{k+1}{2}\rangle} \bar{v}_{x} \|_\infty \| \p_x^{\langle k + 1 \rangle} I_x[v_{yy}] \| \| \p_x^{k+1} q_{y} \| \\
& + \kappa \| \p_x^{\langle \frac{k+1}{2} \rangle} I_x[v_{yy}] \langle y \rangle \|_\infty \| \p_x^{\langle k + 1 \rangle} \bar{v}_{xy} \| \| \p_x^{k+1} q_y\| \\
& +\kappa  \| \p_x^{\langle k +1 \rangle} \bar{v}_{xy} \| \| \p_x^{\langle \frac{k+1}{2} \rangle} I_x[v_{yy}] \langle y \rangle \|_\infty \| \p_x^{k+1} q_{y} \|.
\end{align*}

\begin{align*}
\kappa\int_0^L |\mathcal{I}_{k+1,9}^{(\theta)}| \le &\kappa \sum_{j = 0}^{k+1} \binom{k+1}{j} \int_0^{L} |( \p_x^j \bar{v} \p_x^{k+1 - j} v_{yyy}, \p_x^{k+1} q )| \\
\le & \kappa \sum_{j = 0}^{k+1} \binom{k+1}{j} \Big(  \int_0^{L} | ( \p_x^{k+1 - j} v_{yy} \p_x^j \bar{v}_{y}, \p_x^{k+1} q )| +   \int_0^{L}| ( \p_x^{k+1 - j} v_{yy} \p_x^j \bar{v}, \p_x^{k+1} q_y )| \Big) \\
 \lesssim & \kappa\| \p_x^{\langle \frac{k+1}{2} \rangle} \bar{v}_{y} \langle y \rangle \|_\infty \| \p_x^{\langle k + 1 \rangle} v_{yy} \| \| \p_x^{k+1} q_y \|  + \kappa\| \p_x^{\langle \frac{k+1}{2} \rangle} \bar{v}_{} \|_\infty \| \p_x^{\langle k + 1 \rangle} v_{yy} \| \| \p_x^{k+1} q_{y} \| \\
& +\kappa \| \p_x^{\langle \frac{k+1}{2} \rangle} v_{yy} \langle y \rangle \|_\infty \| \p_x^{\langle k+1 \rangle} \bar{v}_{y} \| \| \p_x^{k+1} q_y \|  + \kappa \| \p_x^{\langle \frac{k+1}{2} \rangle} \bar{v} \|_\infty \| \p_x^{\langle k + 1 \rangle} v_{yy} \| \| \p_x^{k} q_{xy} \|.
\end{align*}

\noindent  Summarizing the $\Lambda$ contributions by repeated use of Lemma \ref{lemma:Sobolev} with $k+1$ for the $\bar{v}$ contribution gives: 
\begin{align*}
&\sup_{x_0 \le L} \int_0^{x_0} (\p_x \p_x^k \Lambda, \p_x \p_x^k q )  \lesssim \kappa o_L(1)p_{\langle k + 1 \rangle} (1 + \| q \|_{X_{\langle k \rangle}}^2 ).
\end{align*}

Next, we have the contributions at $y = 0$: 
\begin{align*}
\int_0^L |\mathcal{I}_{k+1,10}^{(\theta)}| \le &\sum_{j = 1}^{k+1} \binom{k+1}{j}  \int_0^{L} |\p_x^j \bar{u}_y \p_x^{k+1-j} q_y(0) \p_x^{k+1} q_y(0) | \\
 \le &o_L(1)(1 + p_{\langle k \rangle})\| \p_x^{k+1-j} q_y(0) \|_{L^2_x} \| \p_x^k q_y(0) \|_{L^2_x} \le  o_L(1) (1 + p_{\langle k \rangle})\| q \|_{X_{\langle k  \rangle}}^2. 
\end{align*}
Above, we have used that $\p_x^j \bar{u}_y(x, 0) = - \p_x^{j-1} \bar{v}_{yy}(x, 0)$, and then \eqref{int:2}. 

Finally, we have the $u^0$ contributions for which we rely upon \eqref{int:2} for $k + 1$ for the $\bar{v}$ term: 
\begin{align*}
\kappa\int_0^L |\mathcal{I}_{k+1,5}^{(\theta)}|& =\kappa \int_0^{L} |( \p_x^{k+1} U, \p_x^{k} q_x )|  =\kappa \int_0^{L} ( u^0 \p_x^{k+1} \bar{v}_{xyy} - u^0_{yy} \p_x^{k+1} \bar{v}_{x}, \p_x^{k} q_x ) \\
& \hspace{10 mm} \lesssim \kappa o_L(1) \| u^0, u^0_{yy} \cdot \langle y \rangle \|_\infty \| \p_x^{k+1} \bar{v}_{xyy} \| \| \p_x^{k} q_{xy} \| \\
& \hspace{10 mm} \lesssim \kappa o_L(1) p_{\langle k + 1 \rangle} \| q \|_{X_{\langle k \rangle}}. 
\end{align*}
This concludes the proof of the lemma. 
\end{proof}

\begin{lemma}[$\p_x^k$ $\p_y^4$ Estimate] \label{locvyyyy}   Assume $v$ is a solution to (\ref{origPrLay.beta}). Then the following estimate holds: 
\begin{align}  \label{solid.py4.cof}
\begin{aligned}
\| \p_x^k v_{yyyy} \|_{2,loc} \lesssim & \| q \|_{\mathcal{E}_{\langle k \rangle}} + o_L(1)( p_{\langle k  \rangle} + \kappa p_{\langle k + 1 \rangle}) (1 + \| q \|_{X_{\langle k \rangle}} ) \\
& +\kappa  o_L(1) C(u^0) + \| \p_{xy} \p_x^k g_1 \|_{2,loc}.
\end{aligned}
\end{align}
\end{lemma}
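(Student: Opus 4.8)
The proof will read $\p_x^k v_{yyyy}$ directly off the LDP equation \eqref{origPrLay.beta} and then estimate every resulting term in the localized $L^2$ norm. Applying $\p_x^k$ to \eqref{origPrLay.beta} and solving for the fourth-order term gives the algebraic identity
\begin{align*}
\p_x^k v_{yyyy} = \p_x^k \p_{xy}\{\bar u^2 q_y\} - \kappa \p_x^k \Lambda(v) - \kappa \p_x^k U(u^0) + \p_x^k \p_{xy} g_1,
\end{align*}
so the term $\|\p_{xy}\p_x^k g_1\|_{2,loc}$ in \eqref{solid.py4.cof} is produced immediately, and it remains to bound the three structural contributions after multiplying by the cutoff $\chi$ from \eqref{basic.cutoff}.

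For the Rayleigh term, I would expand $\p_{xy}\{\bar u^2 q_y\}$ and then distribute $\p_x^k$ by the Leibniz rule. The only genuinely top-order monomials are $\bar u^2 \p_x^k q_{xyy}$ and $\bar u\bar u_y \p_x^k q_{xy}$; on $\{y\lesssim 1\}$ the first is $\lesssim \|\bar u\|_\infty^{3/2}\|\sqrt{\bar u}\p_x^k q_{xyy}\| \lesssim \|q\|_{\mathcal{E}_k}$ and the second, using $\|\bar u_y\|_\infty\lesssim 1$, is $\lesssim \|\bar u \p_x^k q_{xy}\|\lesssim \sqrt L\,\|q\|_{\mathcal{E}_k}$; together these give the lead term $\|q\|_{\mathcal{E}_{\langle k\rangle}}$ with no $L$-smallness. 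Every other monomial either puts $\p_x^k$ on a factor $q_y$ or $q_{yy}$ (controlled by $o_L(1)(1+\|q\|_{X_{k,\theta}})$ via \eqref{int:1}) or carries at least one $\p_x$ on a coefficient $\bar u^2$, which reduces the order on $q$ by one; in all such terms one converts the background $x$-derivatives to $\bar v_y, \bar v_{yy}$ via $\bar u_x=-\bar v_y$ and splits so that the coefficient sits in $L^2_{loc}$ (bounded by $o_L(1) p_{\langle k\rangle}$ through \eqref{int:2}) and the $q$-factor in $L^\infty$ or weighted $L^2$ (bounded by $o_L(1)(1+\|q\|_{X_{\langle k\rangle}})$ via \eqref{int:4}, \eqref{int:1}); this is exactly the $o_L(1) p_{\langle k\rangle}(1+\|q\|_{X_{\langle k\rangle}})$ bucket.

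The $\kappa\Lambda$ and $\kappa U$ contributions are treated the same way but cost one extra background derivative. For $\kappa\p_x^k\Lambda(v)$ I would first collapse the $x$-antiderivatives using $\p_x I_x = \mathrm{id}$ and then Leibniz-expand: each resulting monomial pairs a background factor among $\p_x^j\{\bar v, \bar v_x, \bar v_{yy}, \bar v_{xyy}\}$ with $j\le k$ (hence up to $k+1$ $x$-derivatives on $\bar v$, whence $p_{\langle k+1\rangle}$, placed in $L^\infty$ or against a $\langle y\rangle^{-1}$-Hardy factor) with a solution factor among $\p_x^j\{v_y, v_{yyy}\}$, each of which is $o_L(1)(1+\|q\|_{X_{k,\theta}})$ by \eqref{int:2}; moreover $\bar v_x, \bar v_{xyy}$ carry an $x$-derivative, so the $x$-Poincar\'e inequality supplies the needed $o_L(1)$. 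For $\kappa\p_x^k U(u^0) = -\kappa u^0\,\p_x^k\bar v_{xyy} + \kappa u^0_{yy}\,\p_x^k\bar v_x$, bounding $\|u^0, u^0_{yy}\langle y\rangle\|_\infty \lesssim C(u^0)$ and $\|\p_x^k\bar v_{xyy}\|_{2,loc} + \|\p_x^k \bar v_x\langle y\rangle^{-1}\| \lesssim o_L(1) p_{\langle k+1\rangle}$ produces the $\kappa o_L(1)C(u^0)$ term. Summing the four pieces yields \eqref{solid.py4.cof}.

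The main difficulty here is organizational rather than analytic: the identity is algebraic, so there is no new structure to exploit, but one must carefully separate the handful of leading monomials that must be absorbed into $\|q\|_{\mathcal{E}_{\langle k\rangle}}$ with \emph{no} smallness from the many commutator and lower-order terms that must be shown to be $o_L(1)$-small, and one must track --- via the Sobolev and interpolation bounds of Lemma \ref{lemma:Sobolev} --- exactly which factors require $k+1$ rather than $k$ background derivatives, which is the source of the $p_{\langle k+1\rangle}$ (versus $p_{\langle k\rangle}$) dependence in the $\Lambda$ and $U$ contributions.
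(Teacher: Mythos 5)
Your proposal is correct and follows essentially the same route as the paper: read $\p_x^k v_{yyyy}$ off the differentiated equation as a pointwise algebraic identity, isolate the top-order Rayleigh monomial $\bar u^2 \p_x^k q_{xyy}$ (absorbed into $\| q \|_{\mathcal{E}_{\langle k \rangle}}$ without smallness), and push every commutator, $\Lambda$-, and $U$-term into the $o_L(1)$ buckets via the embeddings of Lemma \ref{lemma:Sobolev}, with the $p_{\langle k+1 \rangle}$ dependence arising exactly where you locate it. The only cosmetic difference is that the paper keeps the $I_x[\cdot]$ antiderivatives intact in the $\Lambda$ expansion rather than collapsing them, but the resulting bounds are the same.
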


\begin{proof} We apply $\p_x^k$ to the equation \eqref{origPrLay.beta} to obtain the following pointwise inequality:
\begin{align}
\begin{aligned} \label{sasak}
|v^{(k)}_{yyyy}| \lesssim &|\bar{u}^{j_1} \bar{u}^{j_2} q^{(k-j)}_{xyy}| + |\bar{u}_{x}^{j_1} \bar{u}_{y}^{j_2} q_y^{k-j}| + |\bar{u}^{j_1} \bar{u}_{xy}^{j_2} q_y^{k-j}| + |\bar{u}^{j_1} \bar{u}_{y}^{j_2} q_{xy}^{k-j}| \\
& + |\bar{u}^{j_1} \bar{u}_{x}^{j_2} q^{(k-j)}_{yy}| + \kappa |\bar{v}^j_{xyy} I_x[v^{(k-j)}_y]| +  \kappa |\bar{v}^j_{yy}v^{(k-j)}_y| + \kappa |v^{j}_{sx} I_x[v^{(k-j)}_{yyy}]| \\
& + |\bar{v}^j v^{(k-j)}_{yyy} | +  \kappa |v^{(k)}_{sxyy} u^0| +  \kappa |v^{(k)}_{sx} u^0_{yy}| + |\p_{xy}g_1^k|.
\end{aligned}
\end{align}

Placing the terms on the right-hand side above in $L^2_{loc}$ gives the desired result by applying Lemma \ref{lemma:Sobolev} with $k$ and $k+1$: 
\begin{align*}
&\| (\ref{sasak}.1)\| \lesssim \| \bar{v}_{y}^{\langle k - 1 \rangle}, \bar{v}_{yy}^{\langle k - 1 \rangle} \|_\infty^2 \| \bar{u} q_{xyy}^{\langle k \rangle} \|, \\
&\| (\ref{sasak}.2) \| \lesssim \| \bar{v}_{y}^{\langle k - 1 \rangle} \|_\infty \| \bar{v}_{yy}^{\langle k - 1 \rangle} \|_\infty \| q^{\langle k \rangle}_y \| \\
&\| (\ref{sasak}.3) \| \lesssim \| \bar{v}_{y}^{\langle k - 1 \rangle} \|_\infty \| \bar{v}_{yy}^{\langle k - 1 \rangle} \|_\infty \| q^{\langle k \rangle}_y \| \\
&\| (\ref{sasak}.4) \| \lesssim \| \bar{v}^{\langle k - 1 \rangle}_{y} \|_\infty \| \bar{v}^{\langle k - 1 \rangle}_{yy} \|_\infty \| q^{\langle k \rangle}_{xy} \| \\
&\| (\ref{sasak}.5) \| \lesssim \| \bar{v}_{y}^{\langle k - 1 \rangle} \|_\infty \| \bar{v}^{\langle k \rangle}_{y} \|_\infty \| q^{\langle k \rangle}_{yy} \| \\
&\| (\ref{sasak}.6, 7) \| \lesssim \kappa \| \bar{v}^{\langle k + 1 \rangle}_{yy} \| \| v_y^{\langle k \rangle} \|_\infty \\
&\| (\ref{sasak}.8, 9) \| \lesssim (\| \bar{v}^{\langle k \rangle} \|_\infty + \kappa \|\bar{v}^{\langle k + 1 \rangle} \|_\infty) \| v^{\langle k \rangle}_{yyy} \| \\
&\| (\ref{sasak}.10, 11) \| \lesssim  \kappa \| u^0, u^0_{yy} \langle y \rangle^2 \|_\infty \| \bar{v}^{k+1}_{yy} \|
\end{align*}
These estimates immediately establish the proof of the lemma. 
\end{proof}

We now move to a $\| \cdot \|_{\mathcal{H}_k}$ estimate, for which we first recall the definition in (\ref{norm.X.layer}).
\begin{lemma}[Weighted $\p_x^k H^4$]  \label{Hkestimate:8338}  Assume $q$ solves (\ref{origPrLay.beta}). Then the following estimate is valid:
\begin{align}
\begin{aligned}
\| q \|_{\mathcal{H}_k}^2 \lesssim & \| \p_x^k \p_{xy} g_1 \cdot w \chi \|^2 + C_k(q_0) + \kappa o_L(1) C(u^0) \\ \label{weight.H4.cof}
& +( o_L(1) p_{\langle k \rangle} + \kappa o_L(1) p_{\langle k + 1 \rangle})\Big(1+ \| q \|_{X_{\langle k \rangle}}^2 \Big).
\end{aligned}
\end{align}
\end{lemma}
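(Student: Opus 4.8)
The plan is to run a single weighted ``$H^1$-in-$x$'' energy estimate for the forward-parabolic equation satisfied by $\p_x^k q_{yy}$, with the far-field cutoff $1-\chi$ and the exponential weight $w$ built into the multiplier. Two structural facts drive the argument. First, dividing \eqref{origPrLay.beta} by $\bar{u}^2$ and reorganizing shows that $q_{yy}$ solves a genuinely forward-parabolic equation, schematically $\bar{u}^2 \p_x^k q_{xyy} - \bar{u}\, \p_x^k q_{yyyy} = -\p_x^k \p_{xy} g_1 + \kappa \p_x^k \Lambda + \kappa \p_x^k U + (\text{commutators and lower order})$, with strictly positive diffusivity $1/\bar{u}$ on the whole half-line. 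Second, because $1-\chi \equiv 0$ on $\{0 \le y \le 1\}$, every integration by parts in $y$ against a multiplier carrying the factor $(1-\chi)^2$ produces no boundary term at $y = 0$; and on $\supp(1-\chi)$ one has $\bar{u} \gtrsim 1$, so the weighted quotient norms there are comparable to ordinary exponentially weighted $H^4_y$ norms. Thus the genuine difficulty of the problem --- the degeneracy of $\bar{u}$ at $y = 0$ --- does not enter the $\mathcal{H}_k$ estimate at all.

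Concretely, I would apply $\p_x^k$ to \eqref{origPrLay.beta}, test the resulting identity against $-\p_x^k q_{yyyy}\, w^2 (1-\chi)^2$, integrate over $(0,x_0) \times \mathbb{R}_+$, and take $\sup_{0 \le x_0 \le L}$. The two principal terms are integrated by parts once in $y$; using the identity $\p_x^k q_{xyyy}\, \p_x^k q_{yyy} = \tfrac{1}{2} \p_x(|\p_x^k q_{yyy}|^2)$ for the transport term and the pointwise positivity of $\bar{u}$ for the diffusion term, one extracts the two coercive quantities $\sup_{0 \le x_0 \le L} \|\p_x^k q_{yyy}\, w(1-\chi)\|_{L^2_y}^2$ and $\|\p_x^k q_{yyyy}\, w(1-\chi)\|^2$. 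The remaining two members of $\|q\|_{\mathcal{H}_k}$ are then recovered algebraically: expanding $\p_x^k v_{yyyy} = \p_y^4(\bar{u}\, \p_x^k q) + (\text{comm.})$ gives $\|\p_x^k v_{yyyy}\, w(1-\chi)\| \lesssim \|\p_x^k q_{yyyy}\, w(1-\chi)\| + (\text{l.o.t.})$, and solving \eqref{origPrLay.beta} for $\p_x^k q_{xyy}$ on $\supp(1-\chi)$ gives $\|\p_x^k q_{xyy}\, w(1-\chi)\| \lesssim \|\p_x^k q_{yyyy}\, w(1-\chi)\| + \|\p_x^k v_{yyyy}\|_{2,loc} + (\text{l.o.t.})$, the local piece being controlled by Lemma~\ref{locvyyyy} and the $\mathcal{E}_{\langle k \rangle}$ estimate of Lemma~\ref{energy:estImate:1089}. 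The forcing pairing is closed by Cauchy--Schwarz and Young, absorbing the $q_{yyyy}$-factor into the coercive term and leaving the weighted forcing norm of the statement; the data enters through $\|\p_x^k q_{yyy}\, w(1-\chi)\|^2|_{x = 0}$, which together with the compatibility conditions of Definition~\ref{def:compatibility:condition} is $\lesssim C_k(q_0)$.

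The bulk of the work is the commutator bookkeeping, entirely parallel to the proof of Lemma~\ref{energy:estImate:1089}. Commutators between $\p_x^k$ and the coefficients $\bar{u}^2, \bar{u}, 1/\bar{u}$ are expanded using $\p_x^j \bar{u} = \p_x^{j-1} \bar{v}_y$, $\p_x^j \bar{u}_{yy} = -\p_x^{j-1} \bar{v}_{yyy}$, and the interpolation and Sobolev bounds of Lemma~\ref{lemma:Sobolev}; since the worst case, $\p_x^{k+1} \bar{u}^2 \sim \p_x^k \bar{v}_y$, is still controlled by $\|\bar{q}\|_{X_{\langle k \rangle}}$, all such terms contribute only $o_L(1) p_{\langle k \rangle}(1 + \|q\|_{X_{\langle k \rangle}}^2)$, the $o_L(1)$ gain coming from the interpolation inequalities of Lemma~\ref{lemma:Sobolev}. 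Terms where a $y$-derivative falls on $w$ are harmless since $|w'| \lesssim w$ and merely reproduce comparable quantities; terms where a $y$-derivative falls on $1-\chi$ are supported in $\{1 \le y \le 2\}$, carry no exponential growth, and are dominated by the unweighted quantities already controlled through $\|q\|_{\mathcal{E}_{\langle k \rangle}}$ and $\|q\|_{X_{\langle k \rangle}}$. The $\Lambda$-contributions \eqref{def:Lambda} require more care: they involve base-flow factors carrying an extra $y$-derivative ($\p_x^{k+1} \bar{v}_{yy}$, $\p_x^{k+1} \bar{v}_{xy}$, etc.), which is precisely why they require $p_{\langle k+1 \rangle}$ rather than $p_{\langle k \rangle}$ and carry the artificial parameter $\kappa$, and they also involve the $x$-antiderivatives $I_x[v_y], I_x[v_{yyy}]$, handled by Hardy's inequality in $y$ (converting $\|I_x[\cdot] \langle y \rangle^{-1}\|$ into $\|\p_y I_x[\cdot]\|$ and hence into $\p_y$-norms of $v$) together with Lemma~\ref{lemma:Sobolev} at order $k+1$; this produces the $\kappa\, o_L(1)\, p_{\langle k+1 \rangle}(1 + \|q\|_{X_{\langle k \rangle}}^2)$ term, while the $U$-operator \eqref{def:op:U} produces $\kappa\, o_L(1)\, C(u^0)$. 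Collecting everything and absorbing the coercive $q_{yyyy}$-term yields \eqref{weight.H4.cof}.

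The step I expect to be the main obstacle is this last bookkeeping --- in particular, verifying that the nonlocal $\Lambda$-terms can be measured against the weight $w(1-\chi)$ without loss, and that no commutator ever forces a polynomial dependence worse than $p_{\langle k+1 \rangle}$. Beyond this, no new analytic mechanism is required: the $\mathcal{H}_k$ estimate is essentially the far-field, exponentially weighted counterpart of the $\mathcal{E}_k$ estimate of Lemma~\ref{energy:estImate:1089}, and the degeneracy of $\bar{u}$ at $y = 0$ --- the genuine difficulty --- is avoided from the outset by the $1-\chi$ localization.
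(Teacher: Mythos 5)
Your proposal is correct and follows essentially the same route as the paper: the paper squares the $\p_x^k$-differentiated equation in $L^2_y$ against the weight $w^2(1-\chi)^2$, and the cross term between $\p_{xy}(\bar{u}_\theta^2 q^{(k)}_y)$ and $v^{(k)}_{yyyy}$ --- which is precisely your pairing of the transport term with the fourth-order multiplier --- is integrated by parts in $y$ (no boundary term thanks to $1-\chi$) to produce the $\p_x$ of the $\|q^{(k)}_{yyy} w(1-\chi)\|^2$ energy, while the two squared terms directly supply the remaining members of $\mathcal{H}_k$ that you instead recover algebraically from the equation. The supporting ingredients you invoke (Lemma \ref{lemma:Sobolev}, reduction of $\bar{u}_{yyy}, \bar{u}_{yyyy}$ via the Prandtl identities, Hardy for the $I_x[\cdot]$ terms in $\Lambda$ at order $k+1$, and absorption of the $o_L(1)\|q\|_{\mathcal{H}_k}^2$ cross terms) are exactly those used in the paper.
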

\begin{proof} We take $\p_x^k$ of equation (\ref{theta:approx:1}), which produces: 
\begin{align}
\begin{aligned} \label{produce}
&- \p_{xy} ( \bar{u}_\theta^2 q^{(k)}_y ) + v^{(k)}_{yyyy}  = \p_x^k \p_{xy}g_1 -  \kappa \p_x^k \Lambda(v) - \kappa \p_x^k U \\
&\qquad \qquad -  \p_{xy} (\sum_{j = 1}^k \sum_{j_1 + j_2 = j} c_{j_1, j_2, j} \p_x^{j_1}\bar{u}_\theta \p_x^{j_2} \bar{u}_\theta  q^{(k-j)}_y ). 
\end{aligned}
\end{align}
We now square and integrate both sides in $y$, for frozen $x = x_0$, against the weight $w (1 - \chi)$, which produces the inequality  
\begin{align} \n
&\| (- \p_{xy} ( \bar{u}_\theta^2 q^{(k)}_y ) + v^{(k)}_{yyyy} ) w (1 - \chi) \|_{L^2_y}^2 \\ \n
\le& \| \p_x^k \p_{xy}g_1 w (1-\chi) \|_{L^2_y}^2 + \kappa \| \p_x^k \Lambda(v) w (1-\chi) \|_{L^2_y}^2 +\kappa  \| \p_x^k U w (1-\chi)\|_{L^2_y}^2 \\ \label{produce:produce}
& + \|  \p_{xy} (\sum_{j = 1}^k \sum_{j_1 + j_2 = j} c_{j_1, j_2, j} \p_x^{j_1}\bar{u}_\theta \p_x^{j_2} \bar{u}_\theta  q^{(k-j)}_y ) w (1-\chi)\|_{L^2_y}^2. 
\end{align}

We start by expanding out the terms on the left-hand side, which gives
\begin{align} \n
&\|[\p_{xy} \{\bar{u}_\theta^2 q^{(k)}_y \} - v^{(k)}_{yyyy}] \cdot w \{1 - \chi\}\|_{x = x_0}^2 \\ \n
 \ge&  \| v^{(k)}_{yyyy} \{1 - \chi \} w \|_{x = x_0}^2 +  \| [ \bar{u}_\theta^2 q^{(k)}_{xyy}  \{1 - \chi \} w \|_{x = x_0}^2 - 4 \| \bar{u}_\theta \bar{u}_{\theta x} q^{(k)}_{yy} w(1-\chi) \|_{L^2_y}^2 \\ \n
& - 4 \| \bar{u}_{\theta y} \bar{u}_{\theta x} q^{(k)}_y w(1-\chi) \|_{L^2_y}^2 - 4 \| \bar{u} \bar{u}_{\theta x y} q^{(k)}_y w (1 - \chi) \|_{L^2_y}^2 - 4 \| \bar{u}_\theta \bar{u}_{\theta y} q^{(k)}_{xy} w(1-\chi) \|_{L^2_y}^2 \\ \n
& - (2 v^{(k)}_{yyyy}, \bar{u}_\theta^2 q^{(k)}_{xyy} w^2 (1 - \chi)^2) - 4( v^{(k)}_{yyyy}, \bar{u}_\theta \bar{u}_{\theta x} q^{(k)}_{yy} w^2 (1 - \chi)^2) \\ \n
& - 4(v^{(k)}_{yyyy}, \bar{u}_{\theta y} \bar{u}_{\theta x} q^{(k)}_y w^2 (1 - \chi)^2) - 4 (v^{(k)}_{yyyy}, \bar{u}_\theta \bar{u}_{\theta x y} q^{(k)}_y w^2 (1-\chi)^2)\\ \n
& - 4 (v^{(k)}_{yyyy}, \bar{u}_\theta \bar{u}_{\theta y} q^{(k)}_{xy} w^2 (1 - \chi)^2) =: \mathcal{P}_1 + ... + \mathcal{P}_{11}.
\end{align}

\noindent  All terms are estimated in a straightforward manner except for $\mathcal{P}_7$, upon invoking $\bar{u}_x + \bar{v}_y = 0$, so we begin with: 
\begin{align*}
&|\mathcal{P}_3| \lesssim \| \bar{u}_{x}  \|_\infty^2 \|q^{(k)}_{yy} \cdot w \|_{x = x_0}^2 \\
&|\mathcal{P}_4| \lesssim \| \bar{u}_{y} \|_\infty^2 \| \bar{v}_{y} \|_\infty^2 \|q^{(k)}_y w \|_{x = x_0}^2 \\
&|\mathcal{P}_5| \lesssim \| \bar{u} \bar{v}_{yy} \|_\infty^2 \|q^{(k)}_y w \|_{x = x_0}^2,  \\
&|\mathcal{P}_6| \lesssim \| \bar{u} \bar{u}_{y} w \|_\infty^2 \| q^{(k)}_{xy} \|_{x = x_0}^2, \\
&|\mathcal{P}_8| \lesssim \| \bar{u}_{x} \|_\infty | v^{(k)}_{yyyy} w \{1 -\chi\} \|_{x = x_0} \| q^{(k)}_{yy} w \{1 -\chi\}\|_{x = x_0} \\
&|\mathcal{P}_9| \lesssim \| \bar{u}_{y} \bar{v}_{y}  \|_\infty |q^{(k)}_y w \{1 -\chi\}\|_{x = x_0} \|v^{(k)}_{yyyy} w \{1 -\chi\}\|_{x = x_0}, \\
&|\mathcal{P}_{10}| \lesssim \| \bar{u} \bar{v}_{yy} \|_\infty |q^{(k)}_y w  \{1 -\chi\}\|_{x = x_0} \|v^{(k)}_{yyyy}w \{1 -\chi\}\|_{x = x_0} \\
&|\mathcal{P}_{11}| \lesssim \| \bar{u} \bar{u}_{y} w \|_\infty \| q^{(k)}_{xy} \|_{x = x_0} \|v^{(k)}_{yyyy} w \{1 -\chi\}\|_{x = x_0}.
\end{align*}

\noindent  Upon integrating in $x$ and applying Lemma \ref{lemma:Sobolev}, we may summarize the above estimates via: 
\begin{align*}
&|\mathcal{P}_3| + ... +  |\mathcal{P}_6| + |\mathcal{P}_8| +... + |\mathcal{P}_{11}| \lesssim o_L(1) p(\| \bar{q} \|_{X_{\langle 1 \rangle}}) (1 + \| q \|_{X_k} ). 
\end{align*}

We thus move to $\mathcal{P}_7$ for which we integrate by parts once in $y$ to obtain  
\begin{align*}
\mathcal{P}_7 = & 2(v^{(k)}_{yyy}, \bar{u}_\theta^2 q^{(k)}_{xyyy} w^2(1 - \chi)^2) + 4(v^{(k)}_{yyy}, \bar{u}_\theta \bar{u}_{\theta y} q^{(k)}_{xyy} w^2 (1 - \chi)^2) \\
& + 4(v^{(k)}_{yyy}, \bar{u}_\theta^2 w w_y (1 - \chi)^2) - 4(v^{(k)}_{yyy}, \bar{u}_\theta^2 w^2 (1 - \chi) \chi') =: \mathcal{P}_{7,1} + \dots + \mathcal{P}_{7,4}.
\end{align*}
We now expand using that $v = \bar{u} q$,  
\begin{align*}
v^{(k)}_{yyy} := & \p_x^k \{ \bar{u} q \}_{yyy} = \p_x^k \{ \bar{u}_{yyy}q + 3 \bar{u}_{yy} q_y + 3 \bar{u}_y q_{yy} + \bar{u} q_{yyy} \} \\
= & \sum_{j = 0}^k c_j \p_x^j \bar{u}_{yyy} \p_x^{k-j} q + 3 c_j \p_x^j \bar{u}_{yy} \p_x^{k-j} q_y + 3 c_j \p_x^j \bar{u}_y \p_x^{k-j} q_{yy} + c_j \p_x^j \bar{u} \p_x^{k-j} q_{yyy}. 
\end{align*}

\noindent First, upon integrating by parts once in $y$ (ignoring commutator terms, which are dealt with below), let us highlight the main positive contribution from the last term above, for $j = 0$:
\begin{align*}
 &2(\bar{u} \p_x^k q_{yyy}, \bar{u}_\theta^2 q^{(k)}_{xyyy} \{1 - \chi \}^2 w^2)_{x = x_0} = (2 \bar{u} \bar{u}_\theta^2 q^{(k)}_{yyy}, q^{(k)}_{xyyy} \{1 - \chi\}^2 w^2)_{x = x_0} \\
 = & \p_x \| |\bar{u}|^{\frac{1}{2}} \bar{u}_\theta q^{(k)}_{yyy} \{1 - \chi \} w \|_{x =x_0}^2 - ( (\bar{u} \bar{u}_\theta^2)_x q^{(k)}_{yyy}, q^{(k)}_{yyy} \{1 - \chi \}^2 w^2)_{x = x_0}.
\end{align*} 

\noindent  Hence:
\begin{align} \n 
- 2( v^{(k)}_{yyyy}, &\bar{u}_\theta^2 q^{(k)}_{xyy} w^2 \{1 - \chi\}^2)_{x = x_0} \\ \n
= &\p_x \| q^{(k)}_{yyy} |\bar{u}|^{\frac{3}{2}} w \{1 - \chi\} \|_{x = x_0}^2 -  (|q^{(k)}_{yyy}|^2,  (\bar{u} \bar{u}_\theta^2)_{x} w^2 \{1 - \chi\}^2)_{x = x_0} \\ \n
& - ( q_{xyy}^k, \p_y \{ [ \bar{u}_{yyy}^j q^{(k-j)} + 3 \bar{u}_{yy}^j q_y^{k-j} + 3 \bar{u}_{y}^j q_{yy}^{k-j} ] \\  \n
& \times \bar{u}_\theta^2 w^2 \{1 - \chi\}^2 \}) + ( 2 v^{(k)}_{yyy}, q^{(k)}_{xyy} \p_y \{ \bar{u}_\theta^2 w^2 \{1 - \chi\}^2 \})_{x = x_0} \\ \ \n
& + \sum_{j = 1}^k c_j (\p_x^j \bar{u} \p_x^{k-j} q_{yyy}, \bar{u}_\theta^2 q^{(k)}_{xyyy} w^2 \{1 - \chi \}^2)_{x = x_0} \\ \label{belly:of:june}
= & \p_x \| q^{(k)}_{yyy} |\bar{u}|^{\frac{3}{2}} w \{1 - \chi\} \|_{x = x_0}^2 + \Upsilon_1 + \dots + \Upsilon_6.
\end{align}

First, we estimate upon integrating from $x = 0$ to $x = x_0$, 
\begin{align*}
|\int_0^{x_0} \Upsilon_1(x) \ud x| \lesssim o_L(1) \sup_x \| |\bar{u}|^{\frac{3}{2}} q^{(k)}_{yyy} w\{1- \chi\} \|_{x = x_0}^2 \lesssim o_L(1) \| q \|_{X_k}^2. 
\end{align*}

Next, 
\begin{align*}
\Upsilon_2 = & - ( q^{(k)}_{xyy} \bar{u}^j_{yyyy},  q^{(k-j)} \bar{u}_\theta^2 w^2 \{1 - \chi\}^2)_{x = x_0} - ( q^{(k)}_{xyy} \bar{u}^j_{yyy}, q^{(k-j)} \bar{u}_\theta \bar{u}_{\theta y} w^2 \{1 - \chi\}^2)_{x = x_0} \\
& - ( q^{(k)}_{xyy} \bar{u}^j_{yyy}, q^{(k-j)}_y \bar{u}_\theta^2 w^2 \{1 - \chi\}^2)_{x = x_0} - ( q^{(k)}_{xyy} \bar{u}^j_{yyy}, q^{(k-j)} \bar{u}_\theta^2 2 ww_{y} \{1 - \chi\}^2)_{x = x_0} \\
& - ( q^{(k)}_{xyy} \bar{u}^j_{yyy}, q^{(k-j)} \bar{u}_\theta^2 w^2 \{1 - \chi\} \chi')_{x = x_0} =: \Upsilon_{2,1} + \dots + \Upsilon_{2,5}.
\end{align*}

\noindent  We will estimate each term above with the help of the Prandtl identities, which follow from (\ref{Pr.leading}), for $\bar{u}$:
\begin{align}
\begin{aligned} \label{pink}
&|\bar{u}^j_{yyy}| \lesssim |\p_x^j \{ \bar{u} \bar{v}_{yy} + \bar{v} \bar{u}_{yy} \}|, \\
&|\bar{u}^j_{yyyy}| \lesssim |\p_x^j \{ \bar{u}_{y} \bar{v}_{yy} + \bar{u} \bar{v}_{yyy} + \bar{v}_{y} \bar{u}_{yy} + \bar{u} \bar{v} \bar{v}_{yy} + \bar{v}^2 \bar{u}_{yy} \}|
\end{aligned}
\end{align}
Inserting this expansion into term $\Upsilon_{2,1}$, gives 
\begin{align*}
|\Upsilon_{2,1}| \lesssim & |( \p_x^j(\bar{u}_y \bar{v}_{yy})  q^{(k)}_{xyy} ,  q^{(k-j)} \bar{u}_\theta^2 w^2 \{1 - \chi\}^2)| + |( \p_x^j(\bar{u} \bar{v}_{yyy})  q^{(k)}_{xyy} ,  q^{(k-j)} \bar{u}_\theta^2 w^2 \{1 - \chi\}^2)| \\
& + |( \p_x^j(\bar{u}_{yy} \bar{v}_{y})  q^{(k)}_{xyy} ,  q^{(k-j)} \bar{u}_\theta^2 w^2 \{1 - \chi\}^2)| + |( \p_x^j(\bar{u} \bar{v} \bar{v}_{yy})  q^{(k)}_{xyy} ,  q^{(k-j)} \bar{u}_\theta^2 w^2 \{1 - \chi\}^2)|\\ 
 &+ |( \p_x^j(\bar{v}^2 \bar{u}_{yy})  q^{(k)}_{xyy} ,  q^{(k-j)} \bar{u}_\theta^2 w^2 \{1 - \chi\}^2)| = : \Upsilon_{2,1,1} + \dots + \Upsilon_{2,1,5}. 
\end{align*}

First, upon integration in $x$ and using \eqref{int:3}, \eqref{int:4}
\begin{align*}
\int_0^{L}| \Upsilon_{2,1,1} | \lesssim & \int_0^{L} | (q^{(k)}_{xyy}, q^{\langle k  \rangle} \bar{u}_\theta^2 w^2 \{1 - \chi\}^2 \bar{u}^{\langle k \rangle}_{y} \bar{v}^{\langle k \rangle}_{yy})| \\
\lesssim & \int_0^{L} \|q^{(k)}_{xyy} \{1 - \chi\} w\|_{x = x_0} \|q^{\langle k  \rangle}\|_\infty \|\bar{v}^{\langle k -1 \rangle}_{yy} \|_\infty \|\bar{v}^{\langle k \rangle}_{yy} w  \{1 - \chi\}\|_{x = x_0}, \\
\lesssim & (1 + o_L(1) p_k)^2 \| q \|_{X_k} o_L(1) \| 1 + \| q \|_{X_k}). 
\end{align*}

\noindent Next, upon invoking \eqref{int:2} and \eqref{int:4}, 
\begin{align*}
\int_0^{L}| \Upsilon_{2,1,2}| \lesssim & |\int_0^{L}( q^{(k)}_{xyy}, q^{\langle k  \rangle} \bar{u}_\theta^2 w^2 \{1 - \chi\}^2 \p_x^{\langle k \rangle} \bar{u} \p_x^{\langle k \rangle} \bar{v}_{yyy})| \\
\lesssim &  \|q^{\langle k  \rangle}\|_\infty \|\p_x^{\langle k -1  \rangle} \bar{v}_{y} \|_\infty \|q^{(k)}_{xyy} \{1 - \chi\} w\|  \|\p_x^{\langle k  \rangle} \bar{v}_{yyy} \{1 - \chi\} w\| \\
\lesssim & (1 + o_L(1) p_k)^2 \| q \|_{X_k} o_L(1) (1 + \| q \|_{X_k}).
\end{align*}

\noindent Next, upon invoking \eqref{int:2}, \eqref{int:4}, \eqref{int:5}, we have 
\begin{align*}
\int_0^{L} |\Upsilon_{2,1,3} | \lesssim &\int_0^{x_0}|( q^{(k)}_{xyy}, q^{\langle k  \rangle} \p_x^{\langle k \rangle} \bar{v}_{y} \p_x^{\langle k \rangle} \bar{u}_{yy} \{1 - \chi\}^2 w^2 \bar{u}_\theta^2)| \\
\lesssim & \|q^{(k)}_{xyy} \{1 - \chi\} w\|  \|q^{\langle k  \rangle}\|_\infty \|\p_x^{\langle k - 1 \rangle} \bar{v}_{yyy} \langle y \rangle^2\|_\infty \|\p_x^{\langle k \rangle} \bar{v}_{y}  w \{1 - \chi\}\|, \\
\lesssim & (1 + o_L(1) p_k)^2 \| q \|_{X_k} o_L(1) (1 + \| q \|_{X_k}). 
\end{align*}

\noindent Next, upon invoking \eqref{int:2}, \eqref{int:4}, we have 
\begin{align*}
\int_0^{L} | \Upsilon_{2,1,4} | \lesssim & \int_0^{L} |( q^{(k)}_{xyy}, q^{\langle k  \rangle} \bar{u}_\theta^2 w^2 \{1 - \chi\}^2 \p_x^{\langle k - 1 \rangle} \bar{v}_{y} \p_x^{\langle k \rangle} \bar{v} \p_x^{\langle k \rangle} \bar{v}_{yy})| \\
\lesssim & \|q^{(k)}_{xyy} \{1 - \chi\} w\| \|q^{\langle k  \rangle}\|_\infty \|\p_x^{\langle k \rangle}\bar{v}\|_\infty \|\p_x^{\langle k - 1 \rangle} \bar{v}_{y}\|_\infty \|\p_x^{\langle k \rangle} \bar{v}_{yy} w \{1 - \chi\}\|, \\
\lesssim & ( 1+ o_L(1) p(\| q \|_{X_k}) ) (1 + o_L(1) p_k)^2 \| q \|_{X_k} o_L(1) ( 1+ \| \bar{q} \|_{X_k}). 
\end{align*}

\noindent Next, upon invoking \eqref{int:4}
\begin{align*}
\int_0^{L} |\Upsilon_{2,1,5}| \lesssim & |\int_0^{L} ( q^{(k)}_{xyy}, q^{\langle k \rangle} \p_x^{\langle k \rangle} \bar{v} \p_x^{\langle k \rangle}\bar{v} \p_x^{\langle k - 1 \rangle} \bar{v}_{yyy} \bar{u}_\theta^2 \{1 - \chi\}^2 w^2)| \\
\lesssim & \|q^{(k)}_{xyy} \{1 - \chi\} w\| \|q^{\langle k \rangle}\|_\infty \|\p_x^{\langle k \rangle} \bar{v}\|_\infty^2 \|\p_x^{\langle k - 1 \rangle}\bar{v}_{yyy} \{1 - \chi\} w \| \\
\lesssim & \| q \|_{X_k} (1 + o_L(1) p(\| q \|_{X_k})) (1 + o_L(1) p_k) o_L(1) (1 + p_k) \\
\lesssim & o_L(1) + o_L(1) p_{\langle k \rangle} + o_L(1) \| q \|_{X_{\langle k \rangle}}^2. 
\end{align*}

We now move to: 
\begin{align*}
\int_0^{L} | \Upsilon_{2,2}| \lesssim & \int_0^{x_0} | (q^{(k)}_{xyy}, q^{\langle k \rangle} w^2 \bar{u} \bar{u}_{y} \{1 - \chi\}^2 [ \bar{u}^{\langle k \rangle} \bar{v}^{\langle k \rangle}_{yy} + v^{\langle k \rangle}_s \bar{u}^{\langle k \rangle}_{yy}] )| \\
\lesssim & \|q^{(k)}_{xyy} w \{1 - \chi\}\|  \|q^{\langle k  \rangle}\|_\infty \Big[ \|\bar{v}^{\langle k - 1 \rangle}_{y}\|_\infty \|\bar{v}^{\langle k \rangle}_{yy} w \{1 - \chi\}\|  \\
& + \|\bar{v}^{\langle k \rangle}\|_\infty \|\bar{u}^{\langle k \rangle}_{yy} w \{1 - \chi\}\| \Big] \| \| \bar{u}_{y} \langle y \rangle \|_\infty \\
\lesssim & \| q \|_{X_k} (1 + o_L(1) \| q \|_{X_k})(1 + o_L(1) p_{k-1}) o_L(1) (1 + \| \bar{q} \|_{X_k}). 
\end{align*}

\noindent Above, we have invoked \eqref{int:2} and \eqref{int:4}.

Next, we use \eqref{pink} and Lemma \ref{lemma:Sobolev} to estimate, 
\begin{align}
\begin{aligned} \n
\int_0^{x_0} |\Upsilon_{2,3}| \le & \sum_{j = 0}^{k} \binom{k}{j} \int_x |(q^{(k)}_{xyy} \bar{u}^{j}_{yyy}, q^{(k-j)}_{y} \bar{u}^2 w^2 \{1 - \chi\}^2 )_{x = x_0} \ud x_0 \\
\lesssim & \int_x |(q^{(k)}_{xyy} \bar{u}_{yyy}, q^{(k)}_y \bar{u}^2 w^2 \{1 - \chi\}^2)_{x = x_0} \ud x_0 \\
& + \int_x |(q^{(k)}_{xyy} \bar{u}^{(k)}_{yyy}, q_y w^2 \{1 - \chi \}^2)_{x = x_0} \ud x_0| \\
& + \sum_{j= 1}^{k-1} \int_x |(q^{(k)}_{xyy} \bar{u}^{(j)}_{yyy}, q^{(k-j)}_y \bar{u}^2 w^2 \{1 - \chi \}^2)_{x = x_0}| \\
\lesssim & \| \bar{u}_{yyy} \|_\infty \| \bar{u} q^{(k)}_{xyy} w \{1 - \chi \} \| \| q^{(k)}_y w \{1 - \chi \} \|  \\
& + \| q^{(k)}_{xyy} w \{1 - \chi \} \| \| \bar{u}^{(k)}_{yyy} w \| \| q_y \|_\infty \\
& + \| \bar{v}_{yyyy}^{\langle k - 2 \rangle} w \|_{L^\infty_x L^2_y} \| q^{(k)}_{xyy} w \| \| q^{\langle k - 1 \rangle}_y \|_{L^2_x L^\infty_y} \\
\lesssim & o_L(1) (1 + o_L(1) p_k) \| q \|_{X_k}.
\end{aligned}
\end{align}

Next, again by invoking Lemma \ref{lemma:Sobolev}, we estimate 
\begin{align*}
\int_x  |\Upsilon_{2,4}| \lesssim & |(q^{(k)}_{xyy}, q^{\langle k \rangle} w^2 \{1 - \chi\}^2 \bar{u}^2 [\bar{u}^{\langle k \rangle} \bar{v}^{\langle k \rangle}_{yy} + \bar{v}^{\langle k \rangle} \bar{u}^{\langle k \rangle}_{yy} ])| \\
\lesssim & \|q^{(k)}_{xyy} w \{1 - \chi\}\|_{x = x_0} \|q^{\langle k  \rangle}\|_\infty \Big[ \|\bar{v}^{\langle k \rangle}_{yy} w \{1 - \chi\}\|_{x = x_0} \|\bar{v}^{\langle k - 1 \rangle}_{y} \|_\infty \\
&+ \|\bar{v}^{\langle k \rangle}\|_\infty \times   \|\bar{v}^{\langle k - 1 \rangle}_{yyy} w \{1 - \chi\}\|_{x = x_0}  \Big] \\
\lesssim & o_L(1) p_k \| q \|_{X_{\langle k \rangle}}^2, \\
\int_x  |\Upsilon_{2,5}| \lesssim & |( q^{(k)}_{xyy}, q^{\langle k \rangle} \{1 - \chi\} \{1 - \chi\}' [ \bar{u}^{\langle k \rangle} \bar{v}^{\langle k \rangle}_{yy} + \bar{v}^{\langle k \rangle} \bar{u}^{\langle k \rangle}_{yy} ])| \\
\lesssim & \|q^{(k)}_{xyy}\|_{loc} \|q^{\langle k  \rangle}\|_{\infty, loc} \Big[ \|\bar{v}_{y}^{\langle k - 1\rangle}\|_{\infty, loc} \|\bar{v}^{\langle k \rangle}_{yy}\|_{loc} \\
& + \|\bar{v}^{\langle k \rangle}\|_{loc} \|\bar{v}^{\langle k - 1 \rangle}_{yyy}\|_{\infty, loc} \Big] \\
\lesssim & o_L(1) p_k \| q \|_{X_{\langle k \rangle}}^2. 
\end{align*}
 
We now estimate the following:
\begin{align*}
\int_x |\Upsilon_3| \lesssim & |( q^{(k)}_{xyy} \bar{u}^j_{yyy}, q^{(k-j)}_y \bar{u}^2 w^2 \{1 - \chi\}^2)| + |( q^{(k)}_{xyy} \bar{u}^j_{yy}, q^{(k-j)}_{yy} \bar{u}^2 w^2 \{1 - \chi\}^2)| \\
& + |( q^{(k)}_{xyy}, \bar{u}^j_{yy} q^{(k-j)}_y \bar{u} \bar{u}_{y} w^2 \{1 - \chi\}^2)| + |(q^{(k)}_{xyy}, \bar{u}^j_{yy} q^{(k-j)}_y \bar{u}^2 ww_{y} \{1 - \chi\}^2)| \\
&  + |( q^{(k)}_{xyy}, \bar{u}^j_{yy} q^{(k-j)}_y \bar{u}^2 w^2 \{1 - \chi\} \{1 - \chi\}' )| \\
\lesssim & o_L(1) p_{k} \| q \|_{X_{\langle k \rangle}}^2. 
\end{align*}

\noindent  Above, we have used the estimates which proceed below, with the use of the identities (\ref{pink}) and Lemma \ref{lemma:Sobolev}:
\begin{align*}
\int_x |\Upsilon_{3,1}| \lesssim & |( q^{(k)}_{xyy}, q^{\langle k \rangle}_y \bar{u}^2 w^2 \{1 - \chi\}^2 [ \bar{u}^{\langle k \rangle} \bar{v}^{\langle k \rangle}_{yy} + \bar{v}^{\langle k \rangle} \bar{u}^{\langle k \rangle}_{yy} ] )| \\
\lesssim & \|q^{(k)}_{xyy} w \{1 - \chi\}\|_{x = x_0} |q^{\langle k  \rangle}_y w\{1 - \chi\}\|_{x = x_0} \Big[ \| \bar{v}_{y}^{\langle k - 1 \rangle}\|_\infty \| \bar{v}^{\langle k - 1 \rangle}_{yy}  \|_\infty \\
& + \| \bar{v}^{\langle k \rangle} \|_\infty  \| \bar{v}_{yy}^{\langle k- 1\rangle} \|_\infty \Big] \\
\int_x |\Upsilon_{3,2}| \lesssim & \|q^{(k)}_{xyy} w \{1 - \chi\}\|_{x = x_0} |q^{\langle k  \rangle} \{1 - \chi\} w \langle y \rangle^{-1}\|_{x = x_0} \| \bar{v}_{yyy}^{\langle k - 1 \rangle} \langle y \rangle \|_\infty \\
\int_x |\Upsilon_{3,3}| \lesssim & \|q^{(k)}_{xyy} w \{1 - \chi\}\|_{x = x_0} |q^{\langle k \rangle} \{1 - \chi\} w \langle y \rangle^{-2}\|_{x = x_0} \| \bar{v}_{yyy}^{\langle k - 1 \rangle} \|_\infty \| \bar{u}_{y} \langle y \rangle^2 \|_\infty \\
\int_x |\Upsilon_{3,4}|\lesssim & \|q^{(k)}_{xyy} w \{1 - \chi\}\|_{x = x_0} \| \bar{v}_{yyy}^{\langle k - 1 \rangle} \|_\infty |q_y^{\langle k  \rangle} w \{1 - \chi\}\|_{x = x_0} \\
\int_x |\Upsilon_{3,5}| \lesssim & \|q^{(k)}_{xyy}\|_{x = x_0, loc} \|q_y^{\langle k  \rangle}\|_{x = x_0,loc} \| \bar{v}_{yyy}^{\langle k - 1 \rangle} \|_{\infty, loc}. 
\end{align*}

We now move to, using Lemma \ref{lemma:Sobolev} to control the $\bar{v}$ contribution:
\begin{align*}
\int_x |\Upsilon_{4}|\lesssim & \int_x ( q^{(k)}_{xyy}, \bar{u}^j_{yy} q_{yy}^{k-j} \bar{u}^2 w^2 \{1 - \chi\}^2) +  \int_x ( q^{(k)}_{xyy}, \bar{u}_{y}^j q_{yyy}^{k-j} \bar{u}^2 w^2 \{1 - \chi\}^2) \\
& + | \int_x ( q^{(k)}_{xyy}, \bar{u}_{y}^j q_{yy}^{k-j} \bar{u} \bar{u}_{y} w^2 \{1 - \chi\}^2) +  \int_x (q^{(k)}_{xyy}, \bar{u}_{y}^j q_{yy}^{k-j} \bar{u}^2 ww_{y} \{1 - \chi\}^2) \\
& +  \int_x ( q^{(k)}_{xyy}, \bar{u}_{y}^j q_{yy}^{k-j} \bar{u}^2 w^2 \{1 - \chi\} \chi') \\
\lesssim & \int_x \Big[ \|q^{(k)}_{xyy} w \{1 - \chi\}\| |q_{yy}^{\langle k \rangle} w \{1 - \chi\} \|_{x = x_0} \|\bar{v}^{\langle k - 1 \rangle}_{yy}  \|_\infty \\
& + \|q^{(k)}_{xyy} w \{1 - \chi\}\| |q_{yyy}^{\langle k \rangle} w \{1 - \chi\}\|_{x = x_0} \| \bar{v}^{\langle k- 1 \rangle}_{yy} \|_\infty \\
& + \|q^{(k)}_{xyy} w \{1 - \chi\}\|_{x = x_0} \|q_{yy}^{\langle k  \rangle} w \{1 - \chi\}\|_{x = x_0} \| \bar{v}_{yy}^{\langle k - 1 \rangle} \|_\infty \| \bar{u}_{y} \|_\infty \\
& + \|q^{(k)}_{xyy} w \{1 - \chi\}\|_{x = x_0} \|q_{yy}^{\langle k  \rangle} w  \{1 - \chi\}\|_{x = x_0} \| \bar{v}_{yy}^{\langle k - 1 \rangle}  \|_\infty  \\
& + \|q^{(k)}_{xyy}\|_{x = x_0,loc} \|q^{\langle k  \rangle} \|_{x = x_0,loc} \| \bar{v}_{yy}^{\langle k - 1 \rangle} \|_{\infty, loc} \Big] \\
\lesssim & o_L(1) p_k \| q \|_{X_{\langle k \rangle}}^2. 
\end{align*}

Next, since $\bar{u}_y$ is bounded by Theorem \ref{thm.Oleinik}, 
\begin{align*}
\int_x |\Upsilon_{5}| \lesssim & | \int_x ( v^{(k)}_{yyy}, q^{(k)}_{xyy} \{ \bar{u} \bar{u}_{y} w^2 \{1 - \chi\}^2 + \bar{u}^2 ww_{y} \{1 - \chi\}^2 + \bar{u}^2 w^2 \{1 - \chi\} \chi' \})| \\
\lesssim & \int_{x} \Big[ \|q^{(k)}_{xyy} w \{1 - \chi\}\|_{x = x_0} \|v^{(k)}_{yyy} w \{1 - \chi\} \|_{x = x_0} \| \bar{u}_{y}\|_\infty \\
& + \|q^{(k)}_{xyy} w \{1 - \chi\}\|_{x = x_0} \|v^{(k)}_{yyy} w \{1 - \chi\}\|_{x = x_0} \\
& + \|q^{(k)}_{xyy} w|_{x = x_0,loc} \|v^{(k)}_{yyy} \{1 - \chi\}|_{x = x_0,loc} \Big] \\
\lesssim & o_L(1) \| q \|_{X_{\langle k \rangle}}^2. 
\end{align*}

To conclude, we have 
\begin{align*}
\int_x |\Upsilon_{6}| = & |- \sum_{j =1}^k \int_x |\binom{k}{j} (q^{(k)}_{xyy}, \p_y \{ \bar{u}^2 \p_x^j \bar{u} \p_x^{k-j} q_{yyy} w^2 \{1 - \chi \}^2  \})_{x = x_0}| \\
\lesssim & \int_x \| q^{(k)}_{xyy}  w \{1 - \chi \}\|_{x = x_0} \| \bar{v}_{yy}^{\langle k - 1 \rangle} \|_\infty \| q^{\langle k - 1 \rangle}_{yyyy} \{1 - \chi \} w \|_{x = x_0} \\
\lesssim & o_L(1) p_k \| q \|_{X_{\langle k \rangle}}^2,
\end{align*}

\noindent all of which are acceptable contributions due to the cut-off $\{1 - \chi \}$. 

  This now concludes our treatment \eqref{belly:of:june} and consequently the left-hand side of \eqref{produce:produce}. We now move to the terms from the right-hand side of \eqref{produce:produce}, upon using Lemma \ref{lemma:Sobolev} for the $\bar{v}$ contribution, 
\begin{align*}
&\int_x \|  \p_{xy} \{ \p_x^{j_1}\bar{u} \p_x^{j_2} \bar{u} \p_x^{k-j} q_y \} w \{1 - \chi\}(x) \|_{x = x_0}^2 \\
&\lesssim \int_x \| \Big[ \p_x^{\langle k + 1 \rangle}\bar{u}_{y} \p_x^{\langle k \rangle}\bar{u} \p_x^{\langle k -1  \rangle} q_y + \p_x^{\langle k +1 \rangle} \bar{u} \p_x^{\langle k \rangle}\bar{u}_{y} \p_x^{\langle k - 1 \rangle} q_y \\
& \hspace{7 mm} + \p_x^{\langle k \rangle} \bar{u} \p_x^{\langle k \rangle} \bar{u} \p_x^{\langle k -1 \rangle} q_{yy}  + \p_x^{\langle k \rangle} \bar{u} \p_x^{\langle k \rangle} \bar{u}_{y} \p_x^{\langle k \rangle} q_y \\
& \hspace{7 mm}  + |\p_x^{\langle k \rangle} \bar{u}|^2 \p_x^{\langle k \rangle} q_{yy} \Big]  w \{1 - \chi\}(x) \|_{x = x_0}^2 \\
&\lesssim \int_x \Big[ \|\p_x^{\langle k - 1 \rangle} \bar{v}_{y} \|_\infty \|\p_x^{\langle k \rangle} \bar{v}_{yy} w \{1 - \chi\} \|_{x = x_0} \|\p_x^{\langle k - 1 \rangle} q_y w \{1 - \chi\} \|_{\infty}^2 \\
& \hspace{7 mm} + \| \p_x^{\langle k - 1 \rangle} \bar{v}_{yy}  \|_\infty^2 \| \p_x^{\langle k \rangle} \bar{v}_{y} \|_\infty^2 \|\p_x^{\langle k - 1 \rangle} q_y w \{1 - \chi\} \|_{x = x_0}^2 \\
& \hspace{7 mm} + \| \p_x^{\langle k - 1 \rangle} \bar{v}_{y} \|_\infty^2 \|\p_x^{\langle k - 1 \rangle} q_{yy} w \{1 - \chi\}\|_{x = x_0}^2  \\
& \hspace{7 mm} + \| \p_x^{\langle k-1 \rangle} \bar{v}_{y} \|_\infty \| \p_x^{\langle k - 1 \rangle} \bar{v}_{yy} \|_\infty^2 \|\p_x^{\langle k \rangle} q_y w \{1 - \chi\} \|_{x = x_0}^2 \\
& \hspace{7 mm} + \| \p_x^{\langle k - 1 \rangle} \bar{v}_{y} \|_\infty^2 |\p_x^{\langle k \rangle} q_{yy} w \{1 - \chi\} \|_{x = x_0}^2 \Big] \\
\lesssim & o_L(1) p_k \| q \|_{X_{\langle k \rangle}}^2.
\end{align*}

We now move to the $\Lambda$ terms: 
\begin{align*}
\p_x^k \Lambda = \sum_{j = 0}^k  \bar{v}^j_{xyy} I_x[v^{(k-j)}_y] + \bar{v}^j_{yy} v^{(k-j)}_{y} - \bar{v}^j_{x} I_x[v^{(k-j)}_{yyy}] - \bar{v}^j v^{(k-j)}_{yyy}
\end{align*}

We estimate directly:
\begin{align*}
&\| \bar{v}^j_{xyy} I_x[v^{(k-j)}_y]  w \{1 - \chi\}(x) \|_{x = x_0}^2 \lesssim \|\bar{v}^{\langle k \rangle}_{xyy} w  \{1 - \chi\}(x)\|_{x = x_0}^2 \| v_y^{\langle k  \rangle}   \|_\infty^2 \\
&\| \bar{v}^j_{yy} v^{(k-j)}_y  w \{1 - \chi\}(x) \|_{x = x_0}^2 \lesssim \|\bar{v}^{\langle k \rangle}_{xyy} w \{1 - \chi\}(x) \|_{x = x_0}^2 \| v_y^{\langle k  \rangle}   \|_\infty^2 \\
&\|  |\bar{v}^j_{x} I_x[v^{(k-j)}_{yyy} \{1 - \chi\} w (x) \|_{x = x_0}^2 \lesssim \| \bar{v}^{\langle k +1 \rangle} \|_\infty \| v^{\langle k \rangle}_{yyy} w \{1 - \chi\} (x) \|_{x = x_0}^2 \\
& \| \bar{v}^j |v^{(k-j)}_{yyy} \{1 - \chi\} w(x) \|_{x = x_0}^2  \lesssim \| \bar{v}^{\langle k +1 \rangle} \|_\infty \| v^{\langle k \rangle}_{yyy} w \{1 - \chi\} (x) \|_{x = x_0}^2.
\end{align*}

Upon integrating in $x$, the above terms are majorized by $o_L(1) p(\| \bar{q} \|_{X_{\langle k+ 1 \rangle}}) (1 + \| q \|_{X_k} )$, upon applying Lemma \ref{lemma:Sobolev} for $k+1$. We now move to the $U(u^0)$ terms: 
\begin{align*}
\int |\p_x^k U(u^0)|^2 w^2 \{1 - \chi\}^2 \le & \int \Big[ |\bar{v}^k_{xyy}|^2 |u^0|^2 + |\bar{v}^k_{x}|^2 |u^0_{yy}|^2 \Big] w^2 \{1 - \chi\}^2 \\
\le & \|u^0 \|_\infty^2 \|\bar{v}^k_{xyy} w \{1 - \chi\}\|_{x = x_0}^2 + \|u^0_{yy} w \{1 - \chi\}\|^2 \|\bar{v}^k_{x}\|_\infty^2. 
\end{align*}

Integrating, the above is majorized by $C(u^0) o_L(1)  p(\| \bar{q} \|_{X_{\langle k +1 \rangle}})$, upon applying Lemma \ref{lemma:Sobolev} for $k+1$. Similarly, the $g$ contributions are clearly estimated via $\|\p_{xy} g_1^{k} \{1 - \chi\} w \|^2$.
\end{proof}

\begin{proposition} \label{Prop.1}   For $k \ge 0$, and let $q$ solve (\ref{origPrLay.beta}). Then:
\begin{align} 
\begin{aligned} \label{iprob}
\| q \|_{X_{ k }} \lesssim & C(q_0) + \| \p_x^k \p_{xy}g_1 w \|^2 + o_L(1) \| \p_x^k \p_x \p_{xy}g_1 \langle y \rangle \|^2 + \kappa o_L(1) C(u^0) \\
& + o_L(1)(p(\| \bar{q} \|_{X_{\langle k \rangle}}) + \kappa p(\| \bar{q} \|_{X_{\langle k + 1 \rangle}}) ) \Big( C(q_0) + \| q \|_{X_{k}}^2 \Big).
\end{aligned}
\end{align}
\end{proposition}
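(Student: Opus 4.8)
The plan is to assemble Proposition \ref{Prop.1} by combining the three workhorse estimates already established: the $\mathcal{E}_k$ energy estimate (Lemma \ref{energy:estImate:1089}), the localized $\p_x^k\p_y^4$ estimate (Lemma \ref{locvyyyy}), and the weighted $\mathcal{H}_k$ estimate (Lemma \ref{Hkestimate:8338}), together with the interpolation inequalities of Lemma \ref{lemma:Sobolev}. Recall from \eqref{norm.X.layer} that
\[
\| q \|_{X_k} = \sup_{0\le x\le L}\Big( \|\bar u \p_x^k q_{xy}\|_{L^2_y} + \|\p_x^k q_{yyy} w(1-\chi)\|_{L^2_y}\Big) + \|\sqrt{\bar u}\,\p_x^k q_{xyy} w\| + \|\p_x^k v_{yyyy} w\| + \|\sqrt{\bar u_y}\,\p_x^k q_{xy}\|_{y=0},
\]
so $\|q\|_{X_k}^2$ splits into: (i) the pieces $\sup_x\|\bar u\p_x^k q_{xy}\|^2 + \|\sqrt{\bar u}\p_x^k q_{xyy}\|^2 + \|\sqrt{\bar u_y}\p_x^k q_{xy}\|_{y=0}^2$, which are exactly $\|q\|_{\mathcal E_k}^2$ controlled by \eqref{Ek}; (ii) the pieces $\sup_x\|\p_x^k q_{yyy}w(1-\chi)\|^2 + \|\sqrt{\bar u}\p_x^k q_{xyy}w\|^2$ plus the far-field part of $\|\p_x^k v_{yyyy}w\|^2$, which are (up to the local parts) $\|q\|_{\mathcal H_k}^2$ controlled by \eqref{weight.H4.cof}; and (iii) the local part $\|\p_x^k v_{yyyy}\|_{2,loc}$ controlled by \eqref{solid.py4.cof}.

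The steps, in order, would be: first, invoke Lemma \ref{energy:estImate:1089} to bound the $\mathcal E_k$-portion by $\|\bar u\p_x^k q_{xy}|_{x=0}\|^2 + o_L(1)(p_{\langle k\rangle}+\kappa p_{\langle k+1\rangle})(1+\|q\|_{X_{\langle k\rangle}}^2) + o_L(1)C(u^0) + o_L(1)\|\p_x^{k+1}\p_{xy}g_1\langle y\rangle\|^2$; second, invoke Lemma \ref{Hkestimate:8338} to bound the $\mathcal H_k$-portion by $\|\p_x^k\p_{xy}g_1\,w\chi\|^2 + C_k(q_0) + \kappa o_L(1)C(u^0) + (o_L(1)p_{\langle k\rangle}+\kappa o_L(1)p_{\langle k+1\rangle})(1+\|q\|_{X_{\langle k\rangle}}^2)$; third, invoke Lemma \ref{locvyyyy} to bound $\|\p_x^k v_{yyyy}\|_{2,loc}$, noting that its right-hand side already features $\|q\|_{\mathcal E_{\langle k\rangle}}$, which is absorbed by step one, plus the same type of $o_L(1)(p_{\langle k\rangle}+\kappa p_{\langle k+1\rangle})(1+\|q\|_{X_{\langle k\rangle}})$ and forcing/$u^0$ terms; fourth, combine the initial data contributions $\|\bar u\p_x^k q_{xy}|_{x=0}\|^2$ and $C_k(q_0)$ into the single constant $C(q_0)$ appearing in \eqref{iprob} — here one uses the compatibility conditions from Definition \ref{def:compatibility:condition} and the explicit formula \eqref{bahumbug} for $\bar u q_{xy}|_{x=0}$ together with Lemma \ref{lemma.compat.1}, which guarantees all the required initial traces lie in weighted $L^2$ and depend only on $V_0$ and $g_1$; and fifth, unify the two forcing norms: $\|\p_x^{k+1}\p_{xy}g_1\langle y\rangle\|$ (from the $\mathcal E_k$ estimate) appears with an $o_L(1)$ prefactor and matches the term $o_L(1)\|\p_x^k\p_x\p_{xy}g_1\langle y\rangle\|^2$ in \eqref{iprob}, while $\|\p_x^k\p_{xy}g_1 w\|$ dominates both $\|\p_x^k\p_{xy}g_1 w\chi\|$ and the local forcing $\|\p_{xy}\p_x^k g_1\|_{2,loc}$ from Lemma \ref{locvyyyy}.

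The one genuine subtlety — and the step I expect to require the most care — is verifying that $\|q\|_{X_k}^2$ is indeed the sum of the three controlled quantities with \emph{no leftover pieces}, in particular that the ``local'' portions of $\|\p_x^k q_{yyy}w(1-\chi)\|$ and $\|\p_x^k q_{xyy}w\|$ near $y\sim 1$ (where $\chi$ transitions) are harmless. Since $w=e^{Ny}$ is bounded on the support of $\chi$, these localized pieces are controlled by $\|\p_x^k q_{yyy}\|_{2,loc}$ and $\|\p_x^k q_{xyy}\|_{2,loc}$, which in turn follow from $\|\sqrt{\bar u}\p_x^k q_{xyy}\|$ (the $\mathcal E_k$ piece, since $\bar u\gtrsim \bar u_y(0)\,y$ is bounded below away from $y=0$ only after multiplication by a weight — actually one uses the interpolation $\|\p_x^k q_{yyy}\|_{2,loc}\lesssim \|v_{yyy}^{(k)}\|\lesssim o_L(1)(1+\|q\|_{X_{k,\theta}})$ from Lemma \ref{lemma:Sobolev}) and from the far-field control already obtained. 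Thus every term regroups correctly.

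Finally, adding the three displayed inequalities and collecting like terms yields exactly \eqref{iprob}: the $o_L(1)$-weighted $(p(\|\bar q\|_{X_{\langle k\rangle}}) + \kappa p(\|\bar q\|_{X_{\langle k+1\rangle}}))(C(q_0)+\|q\|_{X_k}^2)$ absorbs all the $\|q\|_{X_{\langle k\rangle}}^2$ terms (noting $\|q\|_{X_{\langle k\rangle}}\le \|q\|_{X_{\langle k-1\rangle}} + \|q\|_{X_k}$ and that the lower-order norm $\|q\|_{X_{\langle k-1\rangle}}$ is, by induction, already absorbed into $C(q_0)$-type constants at the inductive stage where this proposition is applied), the forcing terms combine as described, and the $C(u^0)$ and $C(q_0)$ constants are collected. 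One should remark that, as written, the inequality is an a priori estimate: it is applied in conjunction with the $\theta$-approximant construction, where $\|\bar q\|_{X_{\langle k\rangle}}$ refers to the background Prandtl (or linearized) profile controlled by the bootstrap, so the polynomial factors $p_{\langle k\rangle}, p_{\langle k+1\rangle}$ are finite, and smallness of $L$ then lets one absorb $o_L(1)\|q\|_{X_k}^2$ into the left-hand side.
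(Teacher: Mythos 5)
Your proposal is correct and follows essentially the same route as the paper: the paper's proof likewise adds \eqref{Ek}, a small multiple of \eqref{solid.py4.cof}, and \eqref{weight.H4.cof}, checks that the left-hand sides regroup to majorize $\|q^{(k)}\|_X$, absorbs the resulting $o(1)\|q^{(k)}\|_{\mathcal{E}}$ term into the left-hand side, and recovers the initial trace $\|\bar{u}\, q^{(k)}_{xy}|_{x=0}\|$ from \eqref{whois}. The only minor imprecision is that the $\|q\|_{\mathcal{E}_{\langle k\rangle}}$ term on the right of \eqref{solid.py4.cof} is handled by taking a \emph{small multiple} of that estimate (with Young's inequality) rather than being ``already absorbed by step one'' as you phrase it, but this is the same mechanism.
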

\begin{proof} We add together (\ref{Ek}), a small multiple of (\ref{solid.py4.cof}) and (\ref{weight.H4.cof}). On the left-hand side, this produces 
\begin{align}
\begin{aligned}
&\sup [ |q^{(k)}_{yyy}w \{1-\chi\}|^2 + |\bar{u} q^{(k)}_{xy}|^2] + \| v^{(k)}_{yyyy} w \{1-\chi\} \|^2 \\
&+ \| v^{(k)}_{yyyy} \|_{loc} + \| q^{(k)}_{xyy} w \{1-\chi\} \|^2 + \| q^{(k)}_{xyy} \sqrt{\bar{u}} \|^2,
\end{aligned}
\end{align}

\noindent which can clearly be combined to majorize $\| q^{(k)} \|_X$. On the right-hand side 
\begin{align}
\begin{aligned}
&\| \p_{xy}\p_x^k g_1 w \{1-\chi\} \|^2 + C(q_0) + \kappa o_L(1) C(u^0) + o_L(1)(p(\| \bar{q} \|_{X_k}) \\
&+ \kappa p(\| \bar{q} \|_{X_{\langle k+ 1 \rangle}})(C(q_0) + \| q^{(k)} \|_X^2) + o(1) \| q^{(k)} \|_{\mathcal{E}} \\
&+ o_L(1) \| \p_{xxy}\p_x^k g_1 \langle y \rangle \|^2 + |\bar{u} q_{xy}^k(0,\cdot)|^2
\end{aligned}
\end{align}

\noindent Of these, the $o(1) \| q^{(k)} \|_{\mathcal{E}}$ term is absorbed to the left-hand side.  Finally, the initial value $|\bar{u} q^{(k)}_{xy}(0,\cdot)|^2$ is obtained through (\ref{whois}).
\end{proof}

We can upgrade to higher $y$ regularity by using the equation. In this direction, we establish the following lemma: 
\begin{lemma} \label{lemma:higher:y} Let $q$ solve (\ref{origPrLay.beta}). Then the following inequality is valid for any $k \ge 0$, 
\begin{align}
\| \p_y^{j} v \| \lesssim \| \p_y^{j - 4} F \| +  p_{\langle k \rangle} \| q \|_{X_{\langle k \rangle}} + C(u^0), \qquad 0 \le j \le 4 + 2k.  
\end{align}
\end{lemma}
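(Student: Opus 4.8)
The plan is to establish, by strong induction on the parabolic order $m := 2\alpha + \beta$, the slightly more general estimate
\begin{align} \label{eq:gen:y:reg}
\| \p_x^\alpha \p_y^\beta v \| \lesssim \| \p_x^\alpha \p_y^{\beta - 4} F \| + p_{\langle k \rangle} \| q \|_{X_{\langle k \rangle}} + C(u^0), \qquad 2\alpha + \beta \le 2k + 4,
\end{align}
the asserted inequality being the slice $\alpha = 0$, with $F = \p_{xy}g_1$. For $\alpha \ge 1$ the descent below also generates mixed-derivative forcing norms $\|\p_x^{\alpha'}\p_y^{\beta'}F\|$ with $2\alpha' + \beta' = m-4$; these are finite under the standing hypotheses on the forcing, and since only $\alpha = 0$ is needed for the lemma we suppress them in the displayed estimate.

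\emph{Base cases $\beta \le 4$.} If $\beta \le 3$, then \eqref{int:2} of Lemma \ref{lemma:Sobolev} applied to $q^{(\alpha)}$ bounds $\| \p_x^\alpha \p_y^\beta v\| \le \| \{v^{(\alpha)}_y, v^{(\alpha)}_{yy}, v^{(\alpha)}_{yyy}\} w\| \lesssim o_L(1)(1 + \|q\|_{X_{\langle k \rangle}})$, using that $2\alpha + \beta \le 2k+4$ forces $\alpha \le k$. If $\beta = 4$, then $\| \p_x^\alpha v_{yyyy}\| \le \| \p_x^\alpha v_{yyyy} w\|$ is one of the terms comprising $\| q \|_{X_\alpha} \le \| q \|_{X_{\langle k \rangle}}$. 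In either case the right side of \eqref{eq:gen:y:reg} dominates.

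\emph{Inductive step $\beta \ge 5$.} Here $2\alpha + \beta \le 2k+4$ forces $\alpha \le k-1$. Solve \eqref{origPrLay.beta} for $\p_y^4 v$ and apply $\p_x^\alpha \p_y^{\beta-4}$, using $v = \bar u q$ so that $\bar u^2 q_y = \bar u v_y - \bar u_y v$; thus the Rayleigh contribution becomes $\p_x^{\alpha+1}\p_y^{\beta-3}(\bar u v_y - \bar u_y v)$. Expanding by the Leibniz rule, the unique term of parabolic order exactly $m$ is $\bar u \, \p_x^{\alpha+1}\p_y^{\beta-2} v$; since $\bar u$ is bounded (Theorem \ref{thm.Oleinik}) and $2(\alpha+1)+(\beta-2)=m$ with $\beta - 2 < \beta$, this is reabsorbed by a nested use of \eqref{eq:gen:y:reg} in which $\beta$ descends in steps of $2$ while the $x$-order rises — the descent halts upon reaching $\p_x^k\p_y^4 v$, so no more than $k$ $x$-derivatives ever appear, and every coefficient encountered stays within the reach of Lemma \ref{lemma:Sobolev}. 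Every remaining term has parabolic order strictly below $m$: the other Leibniz remainders of $\p_x^{\alpha+1}\p_y^{\beta-3}(\bar u v_y - \bar u_y v)$ (in which $\bar u$, $\bar u_y$ is differentiated, or $\p_x$ falls on a coefficient), together with all pieces of $\p_x^\alpha\p_y^{\beta-4}\Lambda(v)$ (recall \eqref{def:Lambda}; the worst, $\bar v\,\p_x^\alpha\p_y^{\beta-1}v$ from $-\bar v v_{yyy}$, has order $m-1$, while the $I_x$ terms carry one fewer $x$-derivative); these are controlled by the outer strong induction on $m$. Throughout, every coefficient $\p_x^\cdot\p_y^\cdot\bar u$, $\p_x^\cdot\p_y^\cdot\bar v$ (at most $\lesssim 2k$ $y$-derivatives) is placed in $L^\infty$ and bounded by $p_{\langle k \rangle}$ via repeated application of Lemma \ref{lemma:Sobolev} to the background $\bar q$; the term $\p_x^\alpha\p_y^{\beta-4}U(u^0)$ (recall \eqref{def:op:U}) is bounded by $C(u^0)\,p_{\langle k \rangle}$ in the same fashion; and the forcing supplies $\|\p_y^{\beta-4}F\|$ (plus the suppressed mixed-derivative terms).

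\emph{Main obstacle.} The only genuine work is the combinatorial bookkeeping of the nested descent — verifying that the equation is invoked only when $\beta \ge 5$, which is exactly what keeps the running $x$-order at most $k$ and hence within the scope of $\| q \|_{X_{\langle k \rangle}}$ and of the embeddings in Lemma \ref{lemma:Sobolev}. No estimate beyond \eqref{origPrLay.beta}, Theorem \ref{thm.Oleinik}, and Lemma \ref{lemma:Sobolev} is needed.
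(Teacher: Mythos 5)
Your proposal is correct in substance and rests on the same engine as the paper's proof: apply $\p_y$ derivatives to \eqref{origPrLay.beta}, isolate the top $y$-derivative of $v$ from the $\p_y^4 v$ term, and control the Rayleigh, $\Lambda$, and $U$ contributions through Lemma \ref{lemma:Sobolev} and the $X_{\langle k \rangle}$ norms. The difference is in how the iteration is organized. The paper works the representative case $k=1$, $j=5$ entirely in the $q$-variable, bounding $\|\p_{xyy}(\bar u^2 q_y)\|$ term by term directly against $\|q\|_{X_{\langle 1 \rangle}}$, and then asserts that higher cases follow by iteration; you instead rewrite $\bar u^2 q_y = \bar u v_y - \bar u_y v$ and make the iteration precise as a double induction — outer on the parabolic order $m = 2\alpha+\beta$, inner a descent in $\beta$ by steps of $2$ with the $x$-order rising — terminating at $\p_x^k v_{yyyy}$, which sits inside $X_k$. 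This is a cleaner and more verifiable account of exactly the bookkeeping the paper leaves implicit, and your count showing the running $x$-order never exceeds $k$ is the right check. Two caveats you should make explicit if you write this up: (i) your displayed estimate, for $j \ge 7$, genuinely acquires the mixed forcing norms $\|\p_x^{\alpha'}\p_y^{\beta'}F\|$ with $2\alpha'+\beta' = j-4$ that you suppress, so the inequality you actually prove has a slightly larger right-hand side than the one stated in the lemma (the same is true of any honest iteration of the paper's argument); and (ii) the coefficients $\p_x^a\p_y^b \bar u$, $\p_x^a\p_y^b\bar v$ with $b$ up to order $2k$ are not controlled in $L^\infty$ by Lemma \ref{lemma:Sobolev} alone, which only reaches three $y$-derivatives — one must either invoke the Prandtl identities \eqref{pink} to trade $y$-derivatives of the background for $x$-derivatives, or apply the lemma itself to the background solution first; the paper elides this point in the same way.
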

\begin{proof}We will first address the case of $k = 1, j = 5$. We take $\p_y$ of equation \eqref{theta:approx:1} to obtain by using simply the definitions \eqref{def:Lambda} - \eqref{def:op:U}, 
\begin{align} \n
\| \p_y^5 v \| \le &\| F_y \| + \kappa o_L(1) \| \p_y U(u^0) \|_{L^2_y} + \kappa \| \p_y \Lambda(v) \| + \| \p_{xyy} (\bar{u}^2 q_y) \| \\
\lesssim & \| F_y \| + \kappa o_L(1) C(u^0) + \kappa p_{\langle 1 \rangle} \| q \|_{X_{\langle 1 \rangle}} + \| \p_{xyy}(\bar{u}^2 q_y) \|. 
\end{align}
We next expand using the product rule and estimate using Lemma \ref{lemma:Sobolev},  
\begin{align} \n
\| \p_{xyy}(\bar{u}^2 q_y) \| \lesssim & \| 2 \bar{u} \bar{u}_x q_{yyy} \| + \| \bar{u}^2 q_{xyyy} \| + \| \p_y^2(\bar{u}^2) q_{xy} \| \\ \n
& + \| \p_{xyy}(\bar{u}^2) q_y \| + \| (\bar{u} \bar{u}_y)_x q_{yy} \| + \| \bar{u} \bar{u}_y q_{xyy} \| \\
\lesssim & \| q \|_{X_{\langle 1 \rangle}}
\end{align}

It is clear that we can upgrade to higher $y$ regularity by iterating the above. 
\end{proof}

We now come to the proofs of two of our main results. 
\begin{proof}\textit{of Proposition \ref{thm.diff} } Proposition \ref{thm.diff} is a direct consequence of Proposition \ref{Prop.1} and Lemma \ref{lemma:higher:y}.
\end{proof}

\begin{proof} \textit{of Theorem \ref{thm.main} } We begin by reformulating the Prandtl equations, \eqref{Pr.leading} into the D-Prandtl system, analogous to \eqref{origPrLay.beta}, which produces ($U = \Lambda = f = 0$)
\begin{align} \label{nl:Prandtl:q:form}
- \p_{xy} \{ \bar{u}^2 q_y \} + \p_y^4 v = 0, \hspace{3 mm} q := \frac{v}{\bar{u}}.
\end{align}

From here, Proposition \ref{Prop.1} is applied with $g_1 = 0, \kappa = 0$, and $q = \bar{q}$ to give 
\begin{align*}
\| q \|_{X_k} = \| \bar{q} \|_{X_k} \lesssim C(q_0) \lesssim C(\bar{U}_0). 
\end{align*}

\noindent Above, we have used that the constant $C(q_0)$ depends on $\| \bar{u} q_{xy} \|_{x = 0}$. From \eqref{nl:Prandtl:q:form}, we obtain 
\begin{align} \label{q:initial}
- \bar{u} q_{xy} = - \frac{v_{yyy}}{\bar{u}} + 2 \bar{u}_x q_y, 
\end{align}

\noindent from which  
\begin{align}
\| \bar{u} q_{xy} \|_{x = 0} \le \| \frac{1}{\bar{u}} v_{yyy} \|_{x = 0} + \| 2 \bar{u}_x q_{y} \|_{x = 0} \le C(\bar{U}_0).
\end{align}

\noindent It is important to note that $v_{yyy}|_{x = 0}(0) = 0$ from $\bar{u}_{yy}|_{y = 0} = 0$ according to the Prandtl equation, \eqref{Pr.leading}. Above, we use that the quantities $\p_y^N v|_{x= 0}$ for any $N \ge 0$ is determined according to the initial data, $u^0_p|_{x = 0}$. Hence, \eqref{nl:Prandtl:q:form} becomes 
\begin{align*}
\| q \|_{X_0} = \| \bar{q} \|_{X_0} \le C(u^0). 
\end{align*}
This concludes the proof. 
\end{proof}

\section{Construction of Approximate Navier-Stokes Solution}

\subsection{Specification of Equations}

We will assume the expansions: 
\begin{align}
&U^\eps = \tilde{u}^n_s + \eps^{N_0} u, \hspace{3 mm} V^\eps = \tilde{v}^n_s + \eps^{N_0} v, \hspace{3 mm} P^\eps = \tilde{P}^n_s + \eps^{N_0} P.
\end{align}

\noindent We will denote the partial expansions: 
\begin{align}
&u_s^i = \sum_{j = 0}^i \sqrt{\eps}^j u^j_e + \sum_{j = 0}^{i-1} \sqrt{\eps}^j u^j_p, \hspace{5 mm} \tilde{u}_s^i = u_s^i + \sqrt{\eps}^i u^i_p, \\
&v_s^i = \sum_{j = 1}^i \sqrt{\eps}^{j-1} v^j_e + \sum_{j = 0}^{i-1} \sqrt{\eps}^j v^j_p, \hspace{5 mm} \tilde{v}_s^i = v_s^i + \sqrt{\eps}^i v^i_p, \\
&P^i_s = \sum_{j = 0}^i \sqrt{\eps}^j P^j_e, \hspace{5 mm}  \tilde{P}_s^i = P_s^i + \sqrt{\eps}^i \Big\{ P^i_p + \sqrt{\eps} P^{i,a}_p \Big\}.
\end{align}

\noindent  We will also define $u^{E,i}_s = \sum_{j = 0}^i \sqrt{\eps}^j u^j_e$ to be the ``Euler" components of the partial sum. Similar notation will be used for $u^{P,i}_s, v^{E,i}_s, v^{P,i}_s$. The following will also be convenient: 
\begin{align}
\begin{aligned} \label{profile.splitting}
&u_s^E := \sum_{i =0}^n \sqrt{\eps}^i u^i_e, \hspace{3 mm} v_s^E := \sum_{i = 1}^n \sqrt{\eps}^{i-1} v^i_e, \\
&u_s^P := \sum_{i = 0}^n \sqrt{\eps}^i u^i_p, \hspace{3 mm} v_s^P := \sum_{i = 0}^n \sqrt{\eps}^i v^i_p, \\
&u_s = u_s^P + u_s^E, \hspace{3 mm} v_s = v_s^P + v_s^E. 
\end{aligned}
\end{align} 

\noindent  The $P^{i,a}_p$ terms are ``auxiliary Pressures" in the same sense as those introduced in \cite{GN} and \cite{Iyer} and are for convenience. We will also introduce the notation: 
\begin{align} \label{bar.defs}
\bar{u}^i_p := u^i_p - u^i_p|_{y = 0}, \hspace{5 mm} \bar{v}^i_p := v^i_p - v^i_p(x,0), \hspace{5 mm} \bar{v}^i_e = v^i_e - v^i_e|_{Y = 0}.
\end{align}

We first record the properties of the leading order $(i = 0)$ layers. For the outer Euler flow, we will take a shear flow, $[u^0_e(Y), 0, 0]$. The derivatives of $u^0_e$ decay rapidly in $Y$ and that is bounded below, $|u^0_e| \gtrsim 1$. 

For the leading order Prandtl boundary layer, the equations are given in (\ref{Pr.leading}), for the $i$'th Euler layer, $i \ge 1$, the equations are given by (\ref{des.eul.1.intro}), whereas for the $i$'th Prandtl layer the equations are given by (\ref{des.pr.1.intro}).

The relevant definitions of the forcing terms in those equations are given below. Note that as a matter of convention, summations that end with a negative number are empty sums.
\begin{definition}[Forcing Terms] \label{def.forcing}  
\begin{align*}
&-f^i_{E,1} := u^{i-1}_{ex} \sum_{j = 1}^{i-2} \sqrt{\eps}^{j-1} \{u^j_e + u^j_p(x,\infty) + u^{i-1}_e \sum_{j = 1}^{i-2} \sqrt{\eps}^{j-1} u^j_{ex} \\
& \hspace{15 mm} + \sqrt{\eps}^{i-2}[ \{u^{i-1}_{e} + u^{i-1}_p(x,\infty) \} u^{i-1}_{ex} + v^{i-1}_e u^{i-1}_{eY}] \\
& \hspace{15 mm} + u^{i-1}_{eY} \sum_{j = 1}^{i-2} \sqrt{\eps}^{j-1} v^j_e + v^{i-1}_e \sum_{j = 1}^{i-2} \sqrt{\eps}^{j-1} u^j_{eY} - \sqrt{\eps} \Delta u^{i-1}_e, \\
&-f^i_{E,2} := v^{i-1}_{eY} \sum_{j = 1}^{i-2} \sqrt{\eps}^{j-1} v^j_e + v_e^{i-1} \sum_{j = 1}^{i-2} \sqrt{\eps}^{j-1} v^j_{eY} + \sqrt{\eps}^{i-2}[v^{i-1}_e v^{i-1}_{eY} + u^{i-1}_e v^{i-1}_{ex}] \\
& \hspace{15 mm} + \{u_e^{i-1} + u^{i-1}_p(x,\infty)\} \sum_{j =1}^{i-2} \sqrt{\eps}^{j-1} v^j_{ex} + v^{i-1}_{ex} \sum_{j = 1}^{i-2} \sqrt{\eps}^{j-1} \{u^j_e + u^j_p(x,\infty) \}\\
& \hspace{15 mm} - \sqrt{\eps} \Delta v^{i-1}_e, \\
&-f^{(i)} := \sqrt{\eps} u^{i-1}_{pxx} + \eps^{-\frac{1}{2}} \{ v^i_e - v^i_e(x,0) \} u^0_{py} + \eps^{-\frac{1}{2}} \{ u^0_e - u^0_e(0) \} u^{i-1}_{px} + \eps^{-\frac{1}{2}} \{ u^{P, i-1}_{sx} \\
& \hspace{15 mm} - \bar{u}^0_{sx} \} u^{i-1}_p +  \eps^{-\frac{1}{2}} \{ u^{E, i-1}_{sx}  - \bar{u}^0_{sx} \} \{u^{i-1}_p - u^{i-1}_p(x,\infty) \} + \eps^{-\frac{1}{2}} v^{i-1}_p \{ \bar{u}^{i-1}_{sy} \\
& \hspace{15 mm} - u^0_{py} \} + u^{i-1}_{px} \sum_{j =1 }^{i-1} \sqrt{\eps}^{j-1}(u^j_e + u^j_p)  + \eps^{-\frac{1}{2}} (v_s^{i-1} - v_s^1) u^{i-1}_{py} + \eps^{-\frac{1}{2}} (v^1_e \\
& \hspace{15 mm} - v^1_e(x,0)) u^{i-1}_{py} + \sqrt{\eps} u^i_{eY} \sum_{j = 0}^{i-1} \sqrt{\eps}^j v^j_p + v^i_e \sum_{j = 1}^{i-1} \sqrt{\eps}^{j-1} u^j_{py} + u^i_{ex} \sum_{j = 0}^{i-1} \sqrt{\eps}^j \{u^j_p \\
& \hspace{15 mm} - u^j_p(x,\infty) \} + u^i_e \sum_{j = 0}^{i-1} \sqrt{\eps}^j u^j_{px} + \int_y^\infty \p_x \{ \sqrt{\eps}^2 u^i_e \sum_{j = 0}^{i-1} \sqrt{\eps}^j v^j_{px} + \sqrt{\eps} v^i_{ex} \\
& \hspace{15 mm} \times \sum_{j = 0}^{i-1} \sqrt{\eps}^j \{u^j_p - u^j_p(x,\infty)\} + \sqrt{\eps}^2 v^i_{eY} \sum_{j = 0}^{i-1} \sqrt{\eps}^j v^j_p + \sqrt{\eps} v^i_e \sum_{j = 0}^{i-1} \sqrt{\eps}^j v^j_{py} \\
& \hspace{15 mm} + \sqrt{\eps} v^{i-1}_s v^{i-1}_{py} + \sqrt{\eps} v_{sy}^{i-1} v^{i-1}_p  + \sqrt{\eps} v^{E,i-1}_{sx} \{u^{i-1}_p - u^{i-1}_p(x,\infty)\} \\
& \hspace{15 mm} +  \sqrt{\eps} v^{P,i-1}_{sx} u^{i-1}_{p} + \sqrt{\eps} u_s^{i-1} v^{i-1}_{px} + \sqrt{\eps} \Delta_\eps v^{i-1}_p + \sqrt{\eps}^i \{ u^{i-1}_p v^{i-1}_{px} + v^{i-1}_p v^{i-1}_{py} \} \} \ud z.
\end{align*}
\end{definition}

For $i = 1$ only, we make the following modifications. The aim is to retain only the required order $\sqrt{\eps}$ terms into $f^{(1)}$. $f^{(2)}$ will then be adjusted by including the superfluous terms. Define: 
\begin{align} 
\begin{aligned} \label{defn.f1.special}
f^{(1)} := & - u^0_p u^1_{ex}|_{Y = 0} - u^0_{px} u^1_e|_{Y = 0}  - \bar{u}^0_{eY}(0) y u^0_{px} - v^0_p u^0_{eY} - v^1_{eY}(0) y u^0_{py}.
\end{aligned}
\end{align} 

For the final Prandtl layer, we must enforce the boundary condition $v^n_p|_{y = 0} = 0$. Define the quantities $[u_p, v_p, P_p]$ to solve
\begin{align} \label{des.pr.1}
\left.
\begin{aligned}
&\bar{u} \p_x u_p + u_p \p_x \bar{u} + \p_y \bar{u} v_p + \bar{v} \p_y u_p + \p_x P_p - \p_{yy} u_p := f^{(n)}, \\  
& \p_x u_p + \p_y v_p = 0,  \hspace{5 mm} \p_y P^i_p = 0\\  
& [u_p, v_p]|_{y = 0} = [-u^n_e, 0]|_{y = 0}, \hspace{5 mm} u_p|_{y \rightarrow \infty} = 0 \hspace{5 mm} v_p|_{x = 0} = V_P^n. 
\end{aligned}
\right\}
\end{align}

\noindent Note the change in boundary condition of $v_{p}|_{y = 0} = 0$ which contrasts the $i = 1,..,n-1$ case. This implies that $v_p = \int_0^y u_{px} \ud y'$. For this reason, we must cut-off the Prandtl layers: 
\begin{align*}
&u^n_p := \chi(\sqrt{\eps}y) u_p + \sqrt{\eps} \chi'(\sqrt{\eps}y) \int_0^y u_p(x, y') \ud y', \\
&v^n_p := \chi(\sqrt{\eps}y) v_p. 
\end{align*}

Here $\mathcal{E}^n$ is the error contributed by the cut-off: 
\begin{align*}
\mathcal{E}^{(n)} &:= \bar{u} \p_x u^{n}_{p} + u^n_p \p_x \bar{u}  +\bar{v} \p_y u^n_{p} + v^n_p \p_y \bar{u}  - u^n_{pyy} - f^{(n)}. 
\end{align*}

Computing explicitly: 
\begin{align} \n
\mathcal{E}^{(n)} := &(1-\chi) f^{(n)} + \bar{u} \sqrt{\eps} \chi'(\sqrt{\eps}y) v_p(x,y) + \bar{u}_{x} \sqrt{\eps} \chi' \int_0^y u_p \\  \n
& + \bar{v} \sqrt{\eps} \chi' u_p + \eps \bar{v} \chi'' \int_0^y u_p + \sqrt{\eps} \chi' u_p \\ \label{dan.1}
& + \eps^{\frac{3}{2}} \chi''' \int_0^y u_p + 2\eps \chi'' u_p + \sqrt{\eps} \chi' u_{py}.
\end{align}

We will now define the contributions into the next order, which will serve as the forcing for the remainder term: 
\begin{align} \n
&\underbar{f}^{(n+1)} := \sqrt{\eps}^n \Big[ \eps u^n_{pxx} + v^n_p\{ \bar{u}^n_{sy} - u^0_{py} \} + \{u^0_e - u^0_e(0) \} u^n_{px} \\ \n
& \hspace{15 mm} + u^n_{px} \sum_{j = 1}^n  \sqrt{\eps}^j (u^j_e + u^j_p) + \{ u^n_{sx} - \bar{u}^0_{sx} \} u^n_p + (v^n_s - v^1_s) u^n_{py} \\ \n
& \hspace{15 mm} + \{ v^1_e - v^1_e(x,0) \} u^n_{py} \Big] + \sqrt{\eps}^n \mathcal{E}^{(n)} + \sqrt{\eps}^{n+2} \Delta u^n_e \\ \label{underbar.f}
& \hspace{15 mm} + \sqrt{\eps}^n u^n_{ex} \sum_{j = 1}^{n-1} \sqrt{\eps}^j u^j_e + \sqrt{\eps}^n u^n_e \sum_{j = 1}^{n-1} \sqrt{\eps}^j u^j_{ex} + \sqrt{\eps}^{2n} [ u^n_e u^n_{ex} \\ \n
& \hspace{15 mm} + v^n_e u^n_{eY}] + \sqrt{\eps}^{n+1} u^n_{eY} \sum_{j= 1}^{n-1} \sqrt{\eps}^{j-1} v^j_e + \sqrt{\eps}^{n-1}v^n_e \sum_{j = 1}^{n-1} \sqrt{\eps}^{j+1} u^j_{eY},
\end{align}
and
\begin{align} \n
& \underbar{g}^{(n+1)} := \sqrt{\eps}^n \Big[ v_s^n \p_y v^n_p + \p_y v_s^n v^n_p + \p_x v^n_s u^n_p + u^n_s \p_x v^n_p - \Delta_\eps v^n_p \\ \n
& \hspace{15 mm}  + \sqrt{\eps}^n \Big( u^n_p \p_x v^n_p + v^n_p \p_y v^n_p \Big) \Big] + (\sqrt{\eps})^n \p_Y v^n_e \sum_{j = 1}^{n-1} (\sqrt{\eps})^{j-1} v^j_e \\ \label{underbar.g}
& \hspace{15 mm}+ \sqrt{\eps}^{n-1} v^n_e \sum_{j = 1}^{i-1} \sqrt{\eps}^j \p_Y v^j_e + \sqrt{\eps}^{2n-1} [v^n_e v^n_{eY} + u^n_e \p_x v^n_e]  \\ \n
& \hspace{15 mm} + \sqrt{\eps}^n u^n_e \sum_{j =1}^{n-1} (\sqrt{\eps})^{j-1}\p_x v^j_e + \sqrt{\eps}^{n-1} \p_x v^n_e \sum_{j = 0}^{n-1} \sqrt{\eps}^j u^j_e + \sqrt{\eps}^{n+1} \Delta v^n_e.
\end{align}
Finally, we have
\begin{align} \label{forcingdefn}
&F_R := \eps^{-N_0} ( \p_y \underbar{f}^{(n+1)} - \eps \p_x \underbar{g}^{(n+1)}).
\end{align}

\subsection{Construction of Euler Layers} \label{appendix.Euler}

Our starting point is the system (\ref{des.eul.1.intro}). Going to vorticity yields the system we will analyze:  
\begin{align} 
\begin{aligned} \label{eqn.vort}
&u^0_e \Delta v^i_e + u^0_{eYY} v^i_e = F^{(i)} := \p_Y f^i_{E,1} - \p_x f^i_{E,2}, \\
&v^i_e|_{Y = 0} = - v^{i-1}_p|_{y = 0}, \hspace{3 mm} v^i_e|_{x = 0,L} = V^i_{E, \{0, L\}}, \hspace{3 mm} u^i_e|_{x = 0} = U^i_{E,0}.
\end{aligned}
\end{align}

\noindent The data for $u^i_e|_{x = 0}$ is required because $u^i_e = u^i_e|_{x = 0} - \int_0^x v^i_{eY}$ will be recovered through the divergence free condition upon constructing $v^i_e$. 

We will quantify the decay rates as $Y \uparrow \infty$ for the quantities $V^i_{E,{0, L}}$ and $F^{(i)}$. 
\begin{definition} \label{def.wm1}   In the case of $i = 1$, define $w_{m_1} = Y^{m_1}$ if $v^1_{e}|_{x = 0} \sim Y^{-m_1}$ or $w_{m_1} = e^{m_1 Y}$ if $v^1_e|_{x = 0} \sim e^{-m_1 Y}$ as $Y \uparrow \infty$. This now fixes whether or not $w_{m}$ will refer to polynomial or exponential growth rates. For other layers, we will assume: 
\begin{align}
\begin{aligned}  \label{decay.rates.euler}
&V^i_{E,\{0, L\}} \sim w_{m_i}^{-1} \text{ for } m_i >> m_1 \\
&F^{(i)} \sim w_{l_i}^{-1} \text{ for some } l_i >> 0. 
\end{aligned}
\end{align}
Finally, let $m_i' := \min \{ m_i, l_i \}$. 
\end{definition}

Define: 
\begin{align} \label{def:S}
S(x,Y) = (1 - \frac{x}{L}) \frac{V_{i,0}(Y)}{v^{i-1}_p(0,0)} v^{i-1}_p(x,0) + \frac{x}{L}\frac{V_{i,L}(Y)}{v^{i-1}_p(L,0)}v^{i-1}_p(x,0),
\end{align}

\noindent and consider the new unknown:
\begin{align*}
\bar{v} := v^i_e - S, 
\end{align*}

\noindent which satisfies the Dirichlet problem: 
\begin{align}
-u^0_e \Delta \bar{v} + u^0_{eYY} \bar{v} = F^{(i)} + \Delta S, \hspace{5 mm} \bar{v}|_{\p \Omega} = 0.
\end{align}

From here, we have for any $m < m_i' - n_0$ for some fixed $n_0$, perhaps large, 
\begin{align}
||v \cdot w_m||_{H^1} \lesssim 1.
\end{align}

To go to higher-order estimates, we must invoke that the data are well-prepared in the following sense: taking two $\p_Y^2$ to the system yields:
\begin{align} \label{corn.1}
&\p_Y^2 v^i_e(0,Y) = \p_Y^2 V_{i,0}(Y), \\ \label{corn.2}
&\p_Y^2 v^i_e(L,Y) = \p_Y^2 V_{i,L}(Y), \\ \label{corn.3}
&\p_Y^2 v^i_e(x,0) = \frac{1}{u^0_e(0)} \Big\{ v^{i-1}_{pxx}(x,0) + u^0_{eYY}(0) v^{i-1}_p(x,0) + F^{(i)}(x,0) \Big\}.
\end{align}

Our assumption on the data, which are compatibility conditions, ensure: 
\begin{align}
&\p_Y^2 V_{i,0}(0) = \frac{1}{u^0_e(0)} \Big\{ v^{i-1}_{pxx}(0,0) + u^0_{eYY}(0) v^{i-1}_p(0,0) + F^{(i)}(0,0) \Big\}, \\
&\p_Y^2 V_{i,0}(L) = \frac{1}{u^0_e(0)} \Big\{ v^{i-1}_{pxx}(L,0) + u^0_{eYY}(0) v^{i-1}_p(L,0) + F^{(i)}(L,0) \Big\}.
\end{align}

It is natural at this point to introduce the following definition:
\begin{definition}[Well-Prepared Boundary Data]    \label{def.well.prp} Consider the corner $(0,0)$. There exists a value of $\Big(\p_Y^2 v^i_e|_{Y = 0}\Big)|_{x = 0}$ which is obtained by evaluating (\ref{corn.3}) at $x = 0$. There exists a value of $\Big(\p_Y^2 v^i_e|_{x = 0} \Big)|_{Y = 0}$ which is obtained by evaluating (\ref{corn.1}) at $Y = 0$. These two values should coincide. The analogous statement should also hold for the corner $(L,0)$. In this case, we say that the boundary data are ``well-prepared to order 2". The data are ``well-prepared to order $2k$" if we can repeat the procedure for $\p_Y^{2k}$.
\end{definition}

We thus have the following system:
\begin{align} \n
-u^0_e \Delta v^1_{eYY} + u^0_{eYY} v^1_{eYY} &+ \p_Y^4 u^0_e v^1_e + 2 \p_Y^3 u^0_e v^1_{eY} \\ \label{eul.sys.1} &- 2 u^0_{eY} \Delta v^1_{eY} - u^0_{eYY} \Delta v^i_e = \p_{YY} F^{(i)}.
\end{align}

We can define another homogenization in the same way:
\begin{align}
S_{(2)}(x,Y) = (1 - \frac{x}{L}) \frac{V''_{i,0}(Y)}{\p_Y^2 v^i_{e}(x,0)} \p_Y^2 v^{i-1}_p(x,0) + \frac{x}{L}\frac{V''_{i,L}(Y)}{\p_Y^2 v^{i-1}_p(L,0)}\p_Y^2 v^{i-1}_p(x,0),
\end{align}

\noindent which is smooth and rapidly decaying by the assumption that the data are well-prepared. Let us consider the system for $\bar{v} := v^1_{eYY} - B_{(2)}$. The first step is to rewrite:
\begin{align} \n
v^i_{exx} &= -v^i_{eYY} + \frac{u^0_{eYY}}{u^0_e} v^i_e + F^i = - \bar{v} + S_{(2)} + F^i.
\end{align}

We can now rewrite the system (\ref{eul.sys.1}) in terms of $\bar{v}$:
\begin{align} \n
-u^0_e \Delta \bar{v} &+ u^0_{eYY} \bar{v} + \p_Y^4 u^0_e v^1_e + 2\p_Y^3 u^0_e v^1_{eY} \\ \n
& -2u^0_{eY} (\p_Y\{ \bar{v} + S_2 \} + \p_Y \{ \bar{v} + S_2 + F^i \}) \\ 
& - u^0_{eYY}  F^i  = u^0_e \Delta S_2 + u^0_{eYY} S_2 + \p_{YY}F^i.
\end{align}

Obtaining estimates for $\bar{v}$ yields for any $m < m_i' - n_0$: 
\begin{align}
||\bar{v} \cdot w_m||_{H^1} \lesssim 1.
\end{align}

Translating to the original unknown gives:
\begin{align} \label{H3eul.est.1}
||v^i_{eYY}, v^1_{eYYx}, v^i_{eYYY} \cdot w_M||_{L^2} \lesssim 1.
\end{align}

Using the equation and Hardy in $Y$, we can obtain: 
\begin{align} \label{H3eul.est}
||v^i_{exx}, v^i_{exxx}, v^1_{exY}, v^i_{exxY} \cdot w_M||_{L^2} \lesssim 1.
\end{align}

Thus, we have the full $H^3$ estimate. $u^1_e$ can be recovered through the divergence free condition:
\begin{align} \label{u1.rec}
u^i_e(x,Y) := u^i_e(0,Y) - \int_0^x \p_Y v^i_e(x', Y) \ud x'.
\end{align}

The compatibility conditions can be assumed to arbitrary order by iterating this process, and thus we can obtain:
\begin{proposition}   \label{L.e.constr} There exists a unique solution $v^i_e$ satisfying (\ref{eqn.vort}). With $u^i_e$ defined through (\ref{u1.rec}), the tuple $[u^i_e, v^i_e]$ satisfy the system (\ref{des.eul.1.intro}). For any $k \ge 0$ and $M \le m_i' - n_0$ for some fixed value $n_0 > 0$: 
\begin{align} \label{Euler:Hk:estimate}
||\{u^i_e, v^i_e \} w_M ||_{H^k} \le C_{k,M}.
\end{align}
\end{proposition}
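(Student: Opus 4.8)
The plan is to prove Proposition \ref{L.e.constr} by induction on $i$, using the reduction to a scalar Dirichlet problem for $v^i_e$ that has already been set up in \eqref{eqn.vort}, and then bootstrapping regularity in Sobolev spaces with the decay weights $w_M$. First I would treat the base regularity ($H^1$ with weight): after subtracting the explicit boundary corrector $S$ from \eqref{def:S}, the new unknown $\bar v := v^i_e - S$ solves the homogeneous-Dirichlet elliptic equation $-u^0_e \Delta \bar v + u^0_{eYY}\bar v = F^{(i)} + \Delta S$. Since $u^0_e$ is bounded above and below and $u^0_{eYY}$ decays rapidly, this is a uniformly elliptic, coercive problem on $\Omega = (0,L)\times\mathbb R_+$; multiplying by $\bar v\, w_m^2$ and integrating by parts (the weight derivatives $w_m'/w_m$ are bounded, and the zeroth-order term $u^0_{eYY}$ is a harmless rapidly-decaying perturbation absorbed for $L$ small or via a spectral-gap argument) gives $\|\bar v\, w_m\|_{H^1}\lesssim \|(F^{(i)}+\Delta S)w_m\|_{L^2}+\|\text{bdry data}\|\lesssim 1$ for any $m < m_i' - n_0$, where the loss $n_0$ comes from the finitely many weight-derivative commutators. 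Here one uses the hypotheses on the decay of $V^i_{E,\{0,L\}}$ and $F^{(i)}$ in Definition \ref{def.wm1}, together with the fact — guaranteed inductively — that $v^{i-1}_p$ and its derivatives decay (this is exactly the content of \eqref{prof.pick}, which in turn follows from Theorem \ref{thm:LP}/Proposition \ref{thm.diff} applied to the $(i-1)$-st Prandtl layer \eqref{des.pr.1.intro}).

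Next I would iterate to higher order. Differentiating \eqref{eqn.vort} twice in $Y$ produces the system \eqref{eul.sys.1} for $v^i_{eYY}$; subtracting the second-order corrector $S_{(2)}$ (which is smooth and rapidly decaying precisely because the data are well-prepared to order $2$ in the sense of Definition \ref{def.well.prp}, so the corner values match) reduces $v^i_{eYY} - S_{(2)}$ again to a homogeneous-Dirichlet coercive elliptic problem, now with forcing built from $\partial_{YY}F^{(i)}$, lower-$Y$-derivative terms of $v^i_e$ already controlled, and the derivatives of $u^0_e$ (which decay). The same weighted energy estimate gives $\|v^i_{eYY}, v^i_{eYYx}, v^i_{eYYY}\cdot w_M\|_{L^2}\lesssim 1$, and then the equation itself, rearranged as $v^i_{exx} = -v^i_{eYY} + (u^0_{eYY}/u^0_e)v^i_e + F^{(i)}$, converts $Y$-regularity into $x$-regularity, yielding $\|v^i_{exx}, v^i_{exxx}, v^i_{exY}, v^i_{exxY}\cdot w_M\|_{L^2}\lesssim 1$ — i.e. the full weighted $H^3$ bound \eqref{H3eul.est.1}--\eqref{H3eul.est}. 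For general $k$ one simply repeats: apply $\partial_Y^{2j}$, use well-preparedness of the data to order $2k$ to homogenize, estimate, and trade $Y$-derivatives for $x$-derivatives through the equation; this gives \eqref{Euler:Hk:estimate} for $v^i_e$, with the admissible weight exponent decreasing by the fixed finite amount $n_0$ coming from the accumulated commutators and corrector subtractions. Finally, $u^i_e$ is recovered by the divergence-free relation \eqref{u1.rec}, $u^i_e(x,Y) = U^i_{E,0}(Y) - \int_0^x \partial_Y v^i_e(x',Y)\,dx'$, so that $\|u^i_e\, w_M\|_{H^k}\lesssim \|U^i_{E,0} w_M\|_{H^k} + L\,\|v^i_e\, w_M\|_{H^{k+1}}\lesssim 1$; uniqueness is immediate from coercivity of the homogeneous Dirichlet problem. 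One then checks directly that $[u^i_e, v^i_e]$ so constructed satisfies the original first-order system \eqref{des.eul.1.intro} (the pressure $P^i_e$ is reconstructed from the two momentum equations, and the vorticity formulation \eqref{eqn.vort} is equivalent to \eqref{des.eul.1.intro} given the boundary data).

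I expect the main obstacle to be bookkeeping rather than a single hard analytic estimate: namely, verifying at each inductive stage that the forcing $F^{(i)} = \partial_Y f^i_{E,1} - \partial_x f^i_{E,2}$ — which by Definition \ref{def.forcing} involves all the lower-order Euler and Prandtl profiles and their derivatives, including $\sqrt\eps\,\Delta$ terms — actually decays at the claimed rate $w_{l_i}^{-1}$ and lies in the required weighted $H^k$ space, and that the corner-compatibility (well-preparedness) conditions propagate. This requires carefully matching the decay exponents: the Prandtl profiles $v^{i-1}_p$ contribute exponential decay in $y$ but only after the rescaling $y = Y/\sqrt\eps$ their traces at $y=0$ and $y=\infty$ feed into the Euler forcing, and the Euler profiles only decay polynomially/exponentially in $Y$ at the base rate $m_1$ fixed in Definition \ref{def.wm1}; so one must track that $m_i' = \min\{m_i, l_i\}$ stays well above $m_1$ and that each differentiation/corrector step costs only a bounded amount. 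A secondary technical point is the zeroth-order term $u^0_{eYY}\bar v$ in the elliptic equation, which is not sign-definite; since $u^0_{eYY}$ decays rapidly it can be absorbed into the left-hand side either by taking $L$ small (it is multiplied, after integration by parts against $\bar v w_m^2$, by a quantity controlled by $\|\bar v\|_{L^2}$ which a Poincaré inequality on $(0,L)$ makes $o_L(1)$), or by a standard perturbation of the coercive principal part — I would use the former, consistent with the $L \ll 1$ regime of the rest of the paper.
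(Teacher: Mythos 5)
Your proposal follows essentially the same route as the paper: homogenize via the corrector $S$ from \eqref{def:S}, run the weighted coercive energy estimate for the Dirichlet problem, differentiate in $Y$ using the well-preparedness of Definition \ref{def.well.prp} with the second corrector $S_{(2)}$, trade $Y$-derivatives for $x$-derivatives through the equation to reach \eqref{H3eul.est.1}--\eqref{H3eul.est}, iterate to order $k$, and recover $u^i_e$ from \eqref{u1.rec} — with existence/uniqueness from coercivity (Lax--Milgram), exactly as the paper states. The additional bookkeeping you flag (decay of $F^{(i)}$ and propagation of compatibility) is consistent with the paper's setup and does not change the argument.
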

\begin{proof}
The existence follows from Lax-Milgram, whereas the estimates follow from continuing the procedure resulting in (\ref{H3eul.est.1}) - (\ref{H3eul.est}).
\end{proof}

\begin{corollary} \label{Cor.cheap.quotient}   Assume $m_i >> m_1$ for $i = 2,...,n$. Then: 
\begin{align} \label{cheap.quotient}
\| \{ u^i_e, v^i_e \} w_{\frac{m_1}{2}} \|_{H^k} \lesssim 1. 
\end{align}
\end{corollary}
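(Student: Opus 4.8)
The plan is to deduce Corollary \ref{Cor.cheap.quotient} directly from Proposition \ref{L.e.constr}, exploiting only the inequality between weights. The point is that since $m_i \gg m_1$ for all $i \ge 2$, in particular $m_i' = \min\{m_i, l_i\} \ge \frac{m_1}{2} + n_0$ for $L$-independent constants (we are free to choose $l_i$ large in Definition \ref{def.wm1}), so $\frac{m_1}{2} \le m_i' - n_0$ and Proposition \ref{L.e.constr} applies verbatim with $M = \frac{m_1}{2}$. For $i = 1$ there is a genuinely delicate point, which I address below; for $i \ge 2$ the corollary is essentially immediate.

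Concretely, first I would recall that $w_m$ denotes a common ambient weight shape — either $(1+Y)^m$ or $e^{mY}$ — fixed once and for all by the behavior of $v^1_e|_{x=0}$ via Definition \ref{def.wm1}. The monotonicity $w_{m'} \le w_m$ pointwise whenever $m' \le m$ then gives, for any $k$,
\begin{align*}
\| \{u^i_e, v^i_e\} w_{\frac{m_1}{2}} \|_{H^k} \le \| \{u^i_e, v^i_e\} w_{m_i'-n_0} \|_{H^k} \le \| \{u^i_e, v^i_e\} w_{M} \|_{H^k} \le C_{k,M},
\end{align*}
the last inequality being exactly \eqref{Euler:Hk:estimate} of Proposition \ref{L.e.constr}, valid for any $M \le m_i' - n_0$. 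Here one must be slightly careful that differentiating $w_{\frac{m_1}{2}}$ in $Y$ produces at worst a bounded multiple of itself (in the exponential case) or a lower-order weight (in the polynomial case), so that a weighted $H^k$ bound with one weight controls weighted $H^k$ bounds with any smaller weight; this is the same elementary Leibniz-rule observation used implicitly throughout Section \ref{appendix.Euler}. For $i \ge 2$ this closes the argument since $m_i \gg m_1$ directly forces $m_i' - n_0 \ge \frac{m_1}{2}$.

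The main obstacle — and the reason this corollary is stated separately — is the case $i = 1$, where $m_1' = \min\{m_1, l_1\}$ and we cannot assume $m_1 \gg m_1$. Here the naive bound would only give control with weight $w_{m_1' - n_0}$, which is weaker than $w_{\frac{m_1}{2}}$ when $m_1$ is not large. The fix is to arrange $l_1$ (the decay rate of the forcing $F^{(1)}$, controlled by the regularity and decay of the lower-order profiles $u^0_p, v^0_p, u^1_e|_{Y=0}$, all of which decay rapidly) to be large, so that $m_1' = m_1$, and then to note that the homogenization $S$ in \eqref{def:S} inherits the decay of $V^1_{E,\{0,L\}} \sim w_{m_1}^{-1}$, so the $H^1$ estimate $\|\bar v w_m\|_{H^1} \lesssim 1$ holds for all $m < m_1 - n_0$. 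Re-examining the iteration that produces \eqref{H3eul.est.1}–\eqref{H3eul.est}, each stage loses only a fixed finite amount of weight, independent of $k$ after absorbing into $n_0$; choosing $n_0$ once, large enough to cover this loss, yields weighted $H^k$ control at weight $m_1 - n_0$. Finally, since $\frac{m_1}{2} \le m_1 - n_0$ holds provided $m_1 \ge 2 n_0$ — which we may assume, enlarging $m_1$ in the hypotheses of Theorem \ref{thm.construct} if necessary — the weight monotonicity argument above applies and gives \eqref{cheap.quotient} for $i = 1$ as well. This completes the proof.
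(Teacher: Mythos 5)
Your argument for $i=1$ matches the paper's (the forcing is absent, so $l_1$ is effectively infinite, $m_1'=m_1$, and $\tfrac{m_1}{2}\le m_1-n_0$ once $m_1$ is large), and the weight-monotonicity reduction to Proposition \ref{L.e.constr} is fine. But you have inverted which case is delicate, and your treatment of $i\ge 2$ rests on a false premise. You write that ``we are free to choose $l_i$ large in Definition \ref{def.wm1}.'' You are not: $l_i$ is defined in \eqref{decay.rates.euler} as the actual decay rate of the forcing $F^{(i)}=\p_Y f^i_{E,1}-\p_x f^i_{E,2}$, which is a derived quantity built from the previously constructed profiles, not a free parameter. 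Since the chain of forcings threads back to the first Euler layer, which only decays like $w_{m_1}^{-1}$, the decay of $F^{(i)}$ is \emph{capped} at roughly $w_{m_1-Cn_0}^{-1}$ no matter how large $m_i$ is. Consequently $m_i'=\min\{m_i,l_i\}=l_i\approx m_1-n_0$ for $i\ge 2$ --- it is controlled by $m_1$, not by $m_i$ --- and the hypothesis $m_i\gg m_1$ alone buys you nothing. The substantive content of the corollary, which your proposal skips, is precisely the verification that the forcing terms $f^i_{E,1},f^i_{E,2}$ in Definition \ref{def.forcing} decay like $w_{m_1-n_0}^{-1}$, so that $l_i=m_1-n_0$ and hence $m_i'-n_0\gtrsim m_1-10n_0\ge\tfrac{m_1}{2}$ for $m_1$ large. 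This is the step the paper carries out and your proof must supply; as written, the ``essentially immediate'' case $i\ge2$ is exactly where the gap lies.

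As a minor point, your displayed chain of inequalities for $i\ge2$ has the middle step reversed: if $M\le m_i'-n_0$ then $w_M\le w_{m_i'-n_0}$ pointwise, so $\|\{u^i_e,v^i_e\}w_{m_i'-n_0}\|_{H^k}\ge\|\{u^i_e,v^i_e\}w_M\|_{H^k}$, not $\le$. The intended argument is simply to apply \eqref{Euler:Hk:estimate} directly with $M=\tfrac{m_1}{2}$ once one knows $\tfrac{m_1}{2}\le m_i'-n_0$; the intermediate weight is unnecessary.
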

\begin{proof} This follows from two points. First, for the $i = 1$ case, the forcing is absent and therefore the parameter $l_1$ can be taken arbitrarily large. In particular this implies that $m_1' = m_1$. Second, a subsequent application of the above proposition shows that the $i$-th layer quantities decay like $m_1 - n_0$. An examination of the forcing terms $f^i_{E,1}, f^i_{E,2}$ shows that these quantities decay as $w_{m_1 - n_0}^{-1}$. Thus, for $i \ge 2$, we can take the parameter $l_i = m_1 - n_0 = m_i'$. Therefore, if $m_1$ is sufficiently large, $\frac{m_1}{2} << m_1 - 10 n_0$.
\end{proof}

Recall the definition of $m_i$ from Definition \ref{def.wm1}. The main estimate here is: 
\begin{lemma} \label{Quotient.Lemma}   Let $v^i_e$ be a solution to (\ref{eqn.vort}). For any $m' < m_i' := \min\{m_i, l_i\}$. Then, the following estimate holds: 
\begin{align}
\| v^i_e w_{m'} \|_{W^{k,\infty}} \lesssim 1.
\end{align}
\end{lemma}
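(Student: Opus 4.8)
\textbf{Proof plan for Lemma \ref{Quotient.Lemma}.} The strategy is a standard bootstrap: upgrade the weighted $H^k$ control from Proposition \ref{L.e.constr} to weighted $W^{k,\infty}$ bounds by Sobolev embedding in two dimensions, being careful that the polynomial-or-exponential weight $w_{m'}$ can be absorbed without loss. First I would note that since $(x,Y)\in(0,L)\times\mathbb R_+$ is effectively a strip of finite width in $x$ and a half-line in $Y$, the embedding $H^2(\Omega)\hookrightarrow L^\infty(\Omega)$ holds (with the usual care near the corners, which is harmless here because all the data are smooth and compatible to the needed order). Applying this to $v^i_e w_{m'}$ itself rather than to $v^i_e$: one checks that $w_{m'}$ and all of its derivatives up to second order are controlled pointwise by $w_{m'}$ (for the exponential weight $\p_Y^j e^{m'Y}=(m')^j e^{m'Y}$; for the polynomial weight $\p_Y^j Y^{m'}\lesssim Y^{m'}$ for $Y\ge 1$, and $Y^{m'}$ is bounded near $Y=0$), so that Leibniz gives $\|v^i_e w_{m'}\|_{W^{k,\infty}}\lesssim \|v^i_e w_{m'}\|_{H^{k+2}}$.

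The second step is to invoke \eqref{Euler:Hk:estimate} from Proposition \ref{L.e.constr} with the weight $w_M$ for $M$ chosen strictly between $m'$ and $m_i'-n_0$; this is possible precisely because the hypothesis is $m'<m_i'$, and $n_0$ from the proposition is a fixed finite loss, so we simply require $M\le m_i'-n_0$ and $M>m'$, which can be arranged whenever $m_i'$ is large enough relative to $n_0$ (the same smallness/largeness convention already in force throughout the construction). Then
\[
\|v^i_e w_{m'}\|_{H^{k+2}} \le \|v^i_e w_M\|_{H^{k+2}}\,\big\| w_{m'}/w_M \big\|_{W^{k+2,\infty}} \lesssim C_{k+2,M},
\]
where the ratio $w_{m'}/w_M$ is either $e^{(m'-M)Y}$ or $Y^{m'-M}$, both bounded on $\mathbb R_+$ together with their derivatives since $m'<M$. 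Combining with the first step yields $\|v^i_e w_{m'}\|_{W^{k,\infty}}\lesssim 1$, as claimed. For the $u^i_e$ analogue (if needed downstream) one uses the recovery formula \eqref{u1.rec} and the corresponding weighted $H^{k+2}$ bound in the same way.

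The only genuinely delicate point is bookkeeping of the weight exponents: one must make sure that the fixed loss $n_0$ in Proposition \ref{L.e.constr}, plus the finitely many derivatives spent in the Sobolev embedding, does not eat into the margin $m_i'-m'$. This is where the assumption $m_i \gg m_1$ for $i\ge 2$ (and the resulting $m_i'=m_1-n_0$ from Corollary \ref{Cor.cheap.quotient}) is used: it guarantees there is always enough room to choose the intermediate exponent $M$. I do not expect any analytic obstacle beyond this; the estimate is purely a consequence of the already-established energy bounds and the finite-dimensionality of the embedding $H^{k+2}\hookrightarrow W^{k,\infty}$ in two space dimensions.
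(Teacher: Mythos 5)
Your route is genuinely different from the paper's, and it has a real gap at exactly the point you flag as "bookkeeping." You obtain the weighted $L^\infty$ bound by Sobolev embedding from the weighted $H^{k+2}$ bound of Proposition \ref{L.e.constr}, which forces you to choose an intermediate exponent $M$ with $m' < M \le m_i' - n_0$. But the lemma asserts the estimate for \emph{every} $m' < m_i'$, including $m' \in [\,m_i'-n_0,\, m_i')$, where no such $M$ exists. The fixed loss $n_0$ in the energy estimates cannot be absorbed by "taking $m_i'$ large relative to $n_0$": that is an added hypothesis not present in the lemma, and for $i=1$ the decay rate $m_1$ of the prescribed datum $V^1_{E,0}$ is an arbitrary finite positive number (Theorem \ref{thm.construct} only assumes $0<m_1<\infty$), so $m_1 - n_0$ may even be negative, in which case your argument yields no decay at all while \eqref{prof.pick} requires decay at a rate close to $m_1$. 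Recovering the decay rate that the energy method loses is precisely the content of this lemma; it cannot be deduced by interpolating against the energy bounds that lose it.

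The paper's proof avoids the loss entirely: after homogenizing ($\bar v_e = v^i_e - S$) and localizing to $Y \gtrsim N$ with $1-\chi(Y/N)$, it studies the regularized quotient $q^\delta = \{1-\chi(\tfrac{Y}{N})\}\bar v_e/(w+\delta)$ with $w = w_{m'}^{-1}$, which satisfies a second-order elliptic equation whose zeroth-order coefficient $w_{YY}/(w+\delta)$ is $o(1)$ on the support (polynomial case) and whose source $R/(w+\delta)$ is bounded precisely because $m' < \min\{m_i, l_i\}$. A barrier/maximum-principle argument then gives $\| q^\delta\|_\infty \lesssim 1$ uniformly in $\delta$ with no loss in the exponent; $Y$-derivatives are handled by differentiating the quotient equation, $\p_x$-derivatives are recovered from the equation and the vanishing of $q^\delta$ at $x = 0, L$, and one finally sends $\delta \downarrow 0$. (For exponential weights the paper runs weighted energy estimates directly on the quotient equation before embedding.) If you wish to salvage your strategy, you must apply the embedding to the quotient itself after establishing a loss-free $H^{k+2}$ bound for it — which is essentially the paper's exponential-weight case — rather than borrowing weight from Proposition \ref{L.e.constr}.
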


\begin{proof} We first homogenize $v^i_e$ by introducing $\bar{v}_e := v^i_e - S$, where $S$ was defined in \eqref{def:S}.  Recall the definition of $\chi$ in (\ref{basic.cutoff}). We will localize using $1 -\chi(\frac{Y}{N})$ for some large, fixed $N > 1$. A direct computation produces the following: 
\begin{align*}
\Delta ( \{ 1 - \chi(\frac{Y}{N}) \} \bar{v}_e) =& -\{ 1 - \chi(\frac{Y}{N}) \} \frac{u^0_{eYY}}{u^0_e} v^1_e - \{ 1 - \chi(\frac{Y}{N}) \} \Delta S \\
& + 2 \p_Y \{ 1 - \chi(\frac{Y}{N}) \} \bar{v}_{eY} + \p_{YY} \{ 1 - \chi(\frac{Y}{N}) \} \bar{v}_e \\
& + (1 - \chi(\frac Y N)) \frac{F^{(i)}}{u^0_e} := R.
\end{align*}

Let $w = w_{m'}^{-1}$. Now we define the quotient $q^\delta = \frac{\{ 1 - \chi(\frac{Y}{N}) \} \bar{v}_e}{w(Y) + \delta}$, which satisfies: 
\begin{align} \label{difrn}
&\underbrace{\Delta q^\delta + 2 \frac{w_Y}{w + \delta} q^\delta_Y}_{:= \mathcal{T}_\delta} + \frac{w_{YY}}{w+\delta} q^\delta = \frac{R}{w+\delta}.
\end{align}

\noindent  \textit{Case 1: $w_{m_i}$ are polynomials in $Y$.} The following inequalities hold, independent of small $\delta$: 
\begin{align} \label{meeshka.1}
&|\frac{R}{w+\delta}| \lesssim 1, \qquad |\frac{w_{YY}}{w+\delta}\{ 1 - \chi(\frac{Y}{N}) \}| \le o(1). 
\end{align}

\noindent The second inequality above holds because $|w_{YY}| \lesssim Y^{-2} |w|$ for polynomial decay, so by taking $N$ large, we can majorize the desired quantity by $o(1)$.  To apply the maximum principle to $q^\delta$, we introduce the following barrier, for $m$ large and fixed and for $f = f(x) := 10  - (1+x)^2$ (which we note satisfies $f''(x) < - 1$):
\begin{align*}
q_{\pm}^\delta := q^\delta \pm f(mx) ( \sup |\frac{R}{w+\delta}| + \sup|\frac{w_{YY}}{w} q^\delta| ).
\end{align*}
Immediate computations gives $\mathcal{T}_\delta[q_-^\delta] \ge 0$ and $\mathcal{T}_\delta[q_+^\delta] \le 0$. Due to the homogenization, $\bar{v}_e|_{x = 0} = \bar{v}_e|_{x = L} = 0$, which immediately implies that $q^\delta_-|_{x = 0} < 0$ and $q^\delta_-|_{x = L} < 0$, and $q^\delta_+|_{x = 0} > 0$, $q^\delta_+|_{x = L} > 0$. Due to the presence of the cut-off, $q^\delta_{\pm}|_{Y = \frac{N}{2}} = 0$. Thus, applying the maximum principle to both $q_-^\delta, q_+^\delta$ on the domain $(x, Y) \in (0, L) \times (\frac N 2, \infty)$, gives: 
\begin{align*}
\| q^\delta \|_\infty \lesssim \sup|\frac{R}{w+\delta}| + \sup |\frac{w_{YY}}{w}q^\delta|. 
\end{align*}
Applying both estimates in (\ref{meeshka.1}) gives $\| q^\delta \|_\infty \lesssim 1$ uniformly in $\delta > 0$. Due to the cutoff $\{1 - \chi(\frac{Y}{N})\}$, all quantities are supported away from $Y = 0$, we may differentiate the equation, (\ref{difrn}), in $Y$ to obtain the new system: 
\begin{align*}
\Delta q^\delta_Y + 2 \frac{w_Y}{w+\delta} q^\delta_{YY} + [\frac{w_{YY}}{w+\delta} + 2 \p_Y\{ \frac{w_Y}{w+\delta} \}] q^\delta_Y = \p_Y\{ \frac{R}{w+\delta} \} - \p_Y\{ \frac{w_{YY}}{w+\delta} \} q^\delta.
\end{align*}

Clearly, we may repeat the above argument for the unknown $q^\delta_Y$. Bootstrapping further to $q^{\delta}_{YY}$ and using the equation, we establish: 
\begin{align*}
\|q^\delta_{xx} \|_\infty \lesssim 1. 
\end{align*}

For each fixed $Y$, $q^\delta_x(x_\ast, y) = 0$ for some $x_\ast = x_\ast(Y) \in [0,L]$ since $q^\delta(0,Y) = q^\delta(L,Y) = 0$. Thus, using the Fundamental Theorem of Calculus $\| q_x^\delta \|_\infty \lesssim 1$. Finally, we use the pointwise in $Y$ equality: 
\begin{align*}
|q^\delta_x - q_x| = \delta | \frac{\chi \bar{v}_{ex}}{w(w+\delta)} | \rightarrow 0 \text{ as } \delta \downarrow 0, \text{ pw in } Y.
\end{align*} 

Thus, for each fixed $Y$, there exists a $\delta_\ast = \delta_\ast(Y) > 0$ such that for $0 < \delta < \delta_\ast(Y)$, $|q^\delta_x - q_x| \le 1/2$. Thus, $|q_x(Y)| \lesssim 1$. This is true for all $Y$. Thus, $\| q_x \|_\infty \lesssim 1$.

\noindent \textit{Case 2: $w_{m_i}$ are exponential in $Y$.} In this case, we start with (\ref{difrn}), and perform $H^k$ energy estimates. We replace (\ref{meeshka.1}) with: 
\begin{align}
\| \frac{R}{w+\delta} \langle Y \rangle^M \| < \infty \text{ for large } M. 
\end{align}

From here, straightforward energy estimates show $\| q^\delta \|_{H^k} \lesssim 1$ for any $k$ as in estimate \eqref{Euler:Hk:estimate}. This is achieved by repeatedly differentiating in $Y$ and using that the cutoff $\{1 - \chi(\frac{Y}{N}) \}$ localizes away from the boundary $\{Y = 0\}$. We thus conclude $\| q^\delta_{xx} \|_{\infty} \lesssim \| q^\delta \|_{H^4} \lesssim 1$ using Sobolev embedding. The proof then concludes as in the polynomial case. 
\end{proof}

\begin{proof}[Proof of Theorem \ref{thm.construct}] Theorem \ref{thm.construct} is a consequence of Theorem \ref{thm.diff}, Proposition \ref{L.e.constr}, Corollary \ref{Cor.cheap.quotient}, and Lemmas \ref{lemma:higher:y}, \ref{Quotient.Lemma}.
\end{proof}

\noindent \textbf{Acknowledgements:} The research of Yan Guo is supported in part by NSF grants DMS-1611695, DMS-1810868. Sameer Iyer was also supported in part by NSF grant DMS 1802940.

\def\bibindent{3.5em}

\end{document}